\def\Diff{\mathop{\rm Diff}\nolimits}
\def\GL{\mathop{\rm GL}\nolimits}
\def\Id{\mathop{\rm Id}\nolimits}
\def\Im{\mathop{\rm Im}\nolimits}
\def\ad{\mathop{\rm ad}\nolimits}
\def\det{\mathop{\rm det}\nolimits}
\def\log{\mathop{\rm log}\nolimits}
\def\Hom{\mathop{\rm Hom}\nolimits}
\def\exp{\mathop{\rm Exp}\nolimits}
\def\Cb{{\mathbb C}}
\def\Nb{{\mathbb N}}
\def\Rb{{\mathbb R}}
\def\Zb{{\mathbb Z}}
\def\Ac{{\cal A}}
\def\Bc{{\cal B}}
\def\Fc{{\cal F}}
\def\Gc{{\cal G}}
\def\Hc{{\cal H}}
\def\Ic{{\cal I}}
\def\Lc{{\cal L}}
\def\Pc{{\cal P}}
\def\Uc{{\cal U}}
\def\Qc{{\cal Q}}
\def\Kc{{\cal K}}
\def\Lc{{\cal L}}
\def\Cc{{\cal C}}
\def\Dc{{\cal D}}
\def\uc{{\cal u}}
\def\a{\alpha}
\def\b{\beta}
\def\d{\delta}
\def\D{\Delta}
\def\g{\gamma}
\def\G{\Gamma}
\def\om{\omega}
\def\Om{\Omega}
\def\s{\sigma}
\def\t{\theta}
\def\ve{\varepsilon}
\def\vp{\varphi}
\def\fl{\forall}
\def\ify{\infty}
\def\nb{\nabla}
\def\ot{\otimes}
\def\ra{\rightarrow}
\def\rt{\triangleright}
\def\lt{\triangleleft}
\def\cl{\blacktriangleright\hspace{-4pt} < }
\def\acl{\blacktriangleright\hspace{-4pt}\vartriangleleft }
\def\p{\partial}
\def\dbar{{^-\hspace{-7pt}\d}}
\def\gbar{{-\hspace{-9pt}\gamma}}
\def\etabar{{-\hspace{-7pt}\eta}}
\def\0D{\Delta^{(0)}}
\def\1D{\Delta^{(1)}}
\def\Db{\blacktriangledown}
\def\ts{\times}
\def\wg{\wedge}
\def\wt{\widetilde}
\def\td{\tilde}
\def\cop{{^{\rm cop}}}
\newcommand{\Fa}{\mathfrak{a}}
\newcommand{\Fg}{\mathfrak{g}}
\newcommand{\Fh}{\mathfrak{h}}
\newcommand{\Fl}{\mathfrak{l}}
\newcommand{\ybo}{{\bf y}}
\def\Tbo{{\bf T}}
\def\abo{{\bf a}}
\newtheorem{theorem}{Theorem}[section]
\newtheorem{remark}[theorem]{Remark}
\newtheorem{proposition}[theorem]{Proposition}
\newtheorem{lemma}[theorem]{Lemma}
\newtheorem{corollary}[theorem]{Corollary}
\newtheorem{definition}[theorem]{Definition}
\def\build#1_#2^#3{\mathrel{
\mathop{\kern 0pt#1}\limits_{#2}^{#3}}}
\newcommand{\ps}[1]{~\hspace{-4pt}_{^{(#1)}}}
\newcommand{\ns}[1]{~\hspace{-4pt}_{_{{<#1>}}}}
\newcommand{\lu}[1]{\;^{#1}\hspace{-2pt}}
\def\odots{\ot\cdots\ot}
\def\wdots{\wedge\dots\wedge}
\def\one{{\bf 1}}
\numberwithin{equation}{section}
 \newcommand{\ie}{{\it i.e.\/}\ }
 \def\cf{{\it cf.\/}\ }
\def\a{\alpha}
\def\b{\beta}
\def\d{\delta}
\def\g{\gamma}
\def\i{\iota}
\def\om{\omega}
\def\s{\sigma}
\def\t{\theta}
\def\ve{\varepsilon}
\def\ph{\phi}
\def\vp{\varphi}
\def\D{\Delta}
\def\G{\Gamma}
\def\Om{\Omega}
\def\Ph{\Phi}
\def\dt{\left.\frac{d}{dt}\right|_{_{t=0}}}
\newcommand{\at}[1]{\hspace{-4pt}\left.\frac{}{}\right|_{_{#1=0}}}
\def\fl{\forall}
\def\ify{\infty}
\def\nb{\nabla}
\def\ot{\otimes}
\def\part{\partial}
\def\ts{\times}
\def\wdg{\wedge}
\def\ra{\rightarrow}
\def\text{\hbox}
\def\fl{\forall}
\def\ify{\infty}
\def\nb{\nabla}
\def\ot{\otimes}
\def\ra{\rightarrow}
\def\ts{\times}
\def\wdg{\wedge}
\def\wt{\widetilde}
\def\Diff{\mathop{\rm Diff}\nolimits}
\def\Hom{\mathop{\rm Hom}\nolimits}
\def\GL{\mathop{\rm GL}\nolimits}
\def\Id{\mathop{\rm Id}\nolimits}
\def\exp{\mathop{\rm exp}\nolimits}
\def\Trace{\mathop{\rm Trace}\nolimits}
\def\build#1_#2^#3{\mathrel{
\mathop{\kern 0pt#1}\limits_{#2}^{#3}}}
\numberwithin{equation}{section}
\newcommand{\comment}[1]{\relax}
\def\tb{{\bf t}}
\def\Gb{{\bf G}}
\def\Nbo{{\bf N}}
\begin{document}
\title{Hopf algebras and universal Chern classes}

\author{
\begin{tabular}{cc}
Henri Moscovici \thanks{Research
    supported in part by the National Science Foundation
    award DMS-1300548
    and by a grant from the Romanian National Authority for Scientific Research,
project no. PN-II-ID-PCE-2012-4-0201.}~~\thanks{Department of Mathematics, 
The Ohio State University,
    Columbus, OH 43210, USA }\quad and \quad  Bahram Rangipour
    \thanks {Department of Mathematics  and   Statistics,
     University of New Brunswick, Fredericton, NB, Canada}
      \end{tabular}}


\maketitle

\begin{abstract}
We construct a variant $\Kc_n$ of the Hopf algebra $\Hc_n$, which
acts directly on the noncommutative model for the space of leaves 
of a general foliation rather
than on its frame bundle. We prove that the Hopf cyclic cohomology 
of $\Kc_n$ is isomorphic to that of the pair  $(\Hc_n, \Fg \Fl_n)$ and thus
consists of the universal Hopf cyclic Chern classes. We also realize these classes in terms of
geometric cocycles.
\end{abstract}

\section*{Introduction}

The application of Connes' cyclic category~\cite{Cext} to the cohomology
of Hopf algebras, originally employed to compute the local index formula~\cite{cm1} 
for hypoelliptic operators on spaces of leaves of foliations~\cite{cm2}, has 
stimulated the interest in developing a theory of Hopf cyclic characteristic 
classes in the framework of noncommutative geometry. To this end the 
geometric characteristic classes of foliations (see e.g.~\cite{BottHaef}) have been
gradually reconfigured in the context of Hopf cyclic cohomology~\cite{mr09, mr11, RS4, M-GC, M-EC}, which holds the potential of being applicable to other
noncommutative spaces (\cf~\cite{CM04}). 
\smallskip

In this paper we construct a variant $\Kc_n$ of the Hopf algebra $\Hc_n$  
(\cf~\cite{cm2, mr09}), which 
acts directly on the noncommutative model for the generic space of leaves rather
than on its frame bundle. Associated to a Kac decomposition of the group
$\Diff(\Rb^n)$ distinct from that employed in defining $\Hc_n$, 
the Hopf algebra $\Kc_n$ has
a different Hopf cyclic cohomology, which is
no longer identifiable as the Gelfand-Fuks
cohomology of the Lie algebra $\Fa_n$ of formal vector fields of $\Rb^n$.
Instead, in analogy with the van Est isomorphism
for algebraic groups (see~\cite{Kumar}), the (absolute) Hopf cyclic cohomology 
of $\Kc_n$ is canonically isomorphic to the relative Lie algebra cohomology of
the pair $(\Fa_n, \Fg \Fl_n)$, or equivalently to the
Hopf cyclic cohomology of the pair $(\Hc_n, \Fg \Fl_n)$ (\cf~\cite{mr09, mr11}), 
and therefore it also is a repository of the universal Hopf cyclic 
Chern classes. The proof of this isomorphism is achieved by
supplementing our earlier techniques with those in~\cite{RS}. By a 
construction parallel to that in~\cite{M-GC}, we then realize these classes in terms of
concrete geometric cocycles, in the spirit of the Chern-Weil theory.
\smallskip

The paper is organized as follows. In \S \ref{S1} we define the Hopf algebra $\Kc_n$
via its natural action on the \'etale groupoid $\Rb^n \rtimes \Diff(\Rb^n)^\d$. In broad
outline this construction parallels that of $\Hc_n$ at
the level of the prolongation groupoid on frame bundle.
However, unlike $\Hc_n$, $\Kc_n$ is no longer isomorphic as an algebra with
a quotient of a universal enveloping algebra, and its antipode is more intricate. 
To prove that it actually is
a Hopf algebra we employ the Lie-Hopf algebra techniques developed in~\cite{RS}
in order to realize it as a bicrossed product.
\smallskip

In \S \ref{S2} we refine the Lie-Hopf algebra decomposition of $\Kc_n$ by means
of a further bicrossed product factorization of its commutative Hopf subalgebra $\Fc_\Kc$. 
Using the full factorization so obtained, we then prove in  \S \ref{S3}
that the Hopf cyclic cohomology of $\Kc_n$ is isomorphic to the relative
 Hopf cyclic cohomology of the pair $(\Hc_n, \Fg \Fl_n)$, and therefore (\cf \cite{mr09})
to the truncated polynomial  ring of Chern classes. 
A crucial ingredient of the proof is supplied by~\cite[Theorem 4.10]{RS}, which provides
the appropriate version of the van Est isomorphism in the present context.
\smallskip

The Chern-Weil type construction of cocycles representing the Hopf cyclic classes of $\Kc_n$ is 
carried out in \S \ref{S4}, by an adaptation of the methods developed in~\cite{M-GC, M-EC}.
This construction is then illustrated in a very concrete fashion in \S \ref{S5}, where we
produce completely explicit cocycles representing the Hopf cyclic classes of  $\Kc_1$ 
and of the pair $(\Hc_1, \Fg \Fl_1)$. The detailed calculation shows clearly how the
equivariant Chern classes $c_0 \equiv 1$ and $c_1$ in the Bott complex become
Hopf cyclic Chern classes of  $\Kc_1$, respectively $(\Hc_1, \Fg \Fl_1)$. Particularly
noteworthy is the metamorphosis of the (secondary) Godbillon-Vey cocycle of $\Hc_1$ into a
representative of a (primary) Chern class. To further clarify this phenomenon we follow through
with an additional calculation which shows how to restore the Godbillon-Vey class.
This also serves to point out that in order to incorporate the secondary Hopf cyclic transverse
characteristic classes at the same direct level it is necessary to pass 
to a topological enhancement of the Hopf algebra $\Kc_1$.
\smallskip 

Consisting exclusively of primary classes, the Hopf cyclic cohomology of 
$\Kc_n$ is perfectly positioned to be a receptacle for the yet elusive
transverse index formula representing the Connes-Chern character in equivariant
$K$-homology. On the other hand, as mentioned above,
a full Hopf cyclic representation of all the geometric 
transverse characteristic classes requires the passage to a 
topological enhancement of these Hopf algebras. This is a separate development of
interest in its own, which
is work in progress and will make the object of a forthcoming paper~\cite{mr16}.


\tableofcontents

\section{The Hopf algebra $\Kc_n$ and its standard action}\label{S1}

Let $M=\Rb^n$ and let $\Gb:=\Diff(M)^\d$ be the group of all orientation preserving 
diffeomorphisms of $M$ equipped with the discrete topology.  
The Hopf algebra $\Kc_n$ arises in the same way as $\Hc_n$ (\cf \cite{cm2}),
only at the level of the action groupoid $M \ltimes \Gb$ rather than of its frame bundle 
prolongation $M \ltimes \Gb$.

Thus, we consider the crossed product algebra 
$\Ac \equiv \Ac_\Gb:= C^{\infty}_c(M)\rtimes \Gb$, 
where $\Gb$ acts on $C^{\infty}_c(M)$ by $\vp\rt f=f\circ\vp^{-1}$.  
A typical element of $\Ac_\Gb$ is a finite sum $\sum_if_iU^\ast_{\vp_i}$, where $f_i\in C^\infty_c(M)$ and $U^\ast_{\vp_i}$ stands for $\vp_i^{-1}\in\Gb$.  The product of $\Ac_\Gb$ is determined by the multiplication rule
\begin{equation}
fU^\ast_\vp \;gU^\ast_\psi= f \cdot (g\circ\vp) U^\ast_{\psi\vp}.
\end{equation}

The vector fields $X_k \cong \p_k:=\frac{\p}{\p x^k}$ is made to act on $\Ac_\Gb$ by
\begin{equation}
X_k(fU^\ast_\vp)= X_k(f)U^\ast_\vp= \p_k(f) U^\ast_\vp.
\end{equation}
One observes that
\begin{align}\label{X-M-A}
\begin{split}
X_k(fU^\ast_\vp \cdot gU^\ast_\psi)&= X_k(f\cdot (g\circ \vp)) U^\ast_{\psi\vp}\\
 &=\p_k(f)U^\ast_{\vp}\cdot g U^\ast_{\psi} +  
 \sum_i \p_k(\vp^i)\cdot f U^\ast_{\vp}\;\p_i(g) U^\ast_{\psi},
\end{split}
\end{align}
which proves that for all $a,b\in \Ac$ one has
\begin{equation} \label{Leib1}
X_k(ab)=X_k(a)b+\sum_i\s^i_k(a)X_i(b),
\end{equation}
where  
\begin{equation}  \label{Leib2}
\s^i_j(fU^\ast_\vp)=\p_j(\vp^i)fU^\ast_\vp.
\end{equation}
Another elementary manipulation  gives
\begin{align}\label{s-1-M-A}
\s^i_j(fU^\ast_\vp\cdot gU^\ast_\psi) = \s^i_j(f\cdot (g\circ \vp)) U^\ast_{\psi\vp}=  
\sum_k   \p_j(\vp^k)\, f U^\ast_{\vp} \cdot \p_k(\psi^i)\, g U^\ast_\psi ,
\end{align}
showing that for any $a,\;b\in \Ac$
\begin{equation}
\s^i_j(ab)=\sum_k \s^i_k(a) \; \s^k_j(b).
\end{equation}

In addition, we introduce the Jacobian operator,
\begin{equation}
\s(fU^\ast_\vp)= \det J(\phi)\cdot fU^\ast_\vp,
\end{equation}
where $J(\phi)(x) = \phi' (x)$ stands for the Jacobian  matrix of $\phi$ at $x \in V$.
It is an algebra automorphism of $\Ac$, whose inverse acts by
\begin{equation}
\s^{-1}(fU^\ast_\vp)=\det J(\vp^{-1})\circ\vp\; fU^\ast_\vp= \det J(\vp)^{-1} \, f U^\ast_\vp.
\end{equation}
 
\begin{definition}
The algebra $\Kc_n$ is the unital subalgebra of  $\Lc(\Ac_\Gb)$ generated by the
operators $\{X_k , \, \s^i_j , \, \s^{-p} \,\mid \, i,j,k=1, \ldots , n ; \, \, p\in\Nb\}$.
\end{definition}

Note that the algebra $\Kc_n$ also contains 
$\s =  \sum_{\pi \in S_n}(-1)^\pi \s^1_{\pi(1)} \cdots \s^n_{\pi(n)}$, as well as
the operators 
$\s^i_{j_1,\ldots,j_k}$, where $1\le i,  j_1, \ldots,j_k\le n$,
\begin{equation}
\s^i_{j_1,\ldots,j_k}(fU^\ast_\vp)=\p_{j_k}\cdots\p_{j_1}(\vp^i)\cdot fU^\ast_\vp ,
\end{equation}
which are iteratively generated by the commutators
\begin{equation}
[X_\ell\;,\, \s^{i}_{j_1,\ldots,j_k}]= \s^{i}_{j_1,\ldots,j_k,l},\\\label{bracket-X-X} 
\end{equation}
Other obvious relations are
\begin{align}
&[X_i,X_j]=0 , \\ 
&\s^{i}_{j_1,\ldots,j_k}=\s^{i}_{j_{\pi(1)},\ldots,j_{\pi(k)}}, \quad \text{for any permutation }
\pi \in S_k, \\\label{brackets-sigma-sigma}
&[\s^{i}_{j_1,\ldots,j_k}\,,\,\s^{p}_{q_1,\ldots,q_m} ]=0,
\qquad [\s^{-1} , \s^{i}_{j_1,\ldots,j_k}]=0, \\  \label{rel-sigma-det}
&\s^{-1} \,  \sum_{\pi \in S_n}(-1)^\pi \s^1_{\pi(1)} \cdots \s^n_{\pi(n)} \, = 1 \, , \\
&   [X_k, \s^{-1} ] = -  \s^{-2} \, 
\sum_{\pi \in S_n} (-1)^\pi \bigl(\s^1_{\pi(1) k} \cdots \s^n_{\pi(n)} + \ldots \\ \notag
&\qquad \qquad \ldots + \s^1_{\pi(1)} \cdots \s^n_{\pi(n) k}\bigr) .
\end{align}

 \begin{proposition}
The following collection of operators forms linear basis for $\Kc_n$ :
\begin{align}\label{PBW}
\s^{-p}\s^{i_1}_{J_1}\cdots \s^{i_m}_{J_{m}}X_{\ell_1}^{q_1}\cdots X_{\ell_k}^{q_k} \, ;
\end{align}
here $1\le i_1, \ldots , i_m \le n$, $1\le \ell_1, \ldots ,\ell_k \le n$, 
$p, \, q_1, \ldots ,q_\ell \in \Zb^+$, and $J_p$ are finite ordered sets $J_p=\{j_1\le j_2\le\cdots\le j_{m_p}\}$, with $1\le j_r\le n$.
\end{proposition}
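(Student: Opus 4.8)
The assertion is a Poincar\'e--Birkhoff--Witt type theorem for $\Kc_n$, and the plan is the usual two-step one: first show that the monomials \eqref{PBW} span $\Kc_n$ as a vector space, then that they are linearly independent. Throughout one uses that, by definition, $\Kc_n$ is the subalgebra of $\Lc(\Ac_\Gb)$ generated by $X_k,\ \s^i_j$ and $\s^{-1}$ (the higher $\s^i_{j_1,\ldots,j_k}$ arising from these via the bracket $[X_\ell,\s^i_{j_1,\ldots,j_k}]=\s^i_{j_1,\ldots,j_k,\ell}$), and that the $\s$-type generators commute among themselves and with $\s^{-1}$, subject only to the Jacobian relation $\s^{-1}\sum_{\pi}(-1)^\pi\s^1_{\pi(1)}\cdots\s^n_{\pi(n)}=1$. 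In particular the commutative subalgebra $\Fc_\Kc$ they generate must be identified, as an algebra, with the localization of the polynomial algebra on $\{\s^i_{j_1,\ldots,j_k}\mid k\ge1\}$ at the Jacobian $\det(\s^i_j)$; this is what makes \eqref{PBW} a plausible basis, once the $\s^i_J$-part is taken in \emph{reduced} form (no factor $\s^1_1\cdots\s^n_n$ once $p\ge1$, and the $(i_r,J_r)$, resp.\ the $\ell_s$, listed in a fixed order).

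\emph{Spanning.} I would run the evident straightening procedure on an arbitrary word in $X_k,\ \s^i_j,\ \s^{-1}$. First move each $X_\ell$ to the right of every $\s$-type letter, using $X_\ell\,\s^i_{j_1,\ldots,j_k}=\s^i_{j_1,\ldots,j_k}X_\ell+\s^i_{j_1,\ldots,j_k,\ell}$ and $X_\ell\,\s^{-1}=\s^{-1}X_\ell+[X_\ell,\s^{-1}]$, where the correction $[X_\ell,\s^{-1}]=-\s^{-2}(\cdots)$ lies in the $\s$-subalgebra and contains no $X$. Each step either deletes an $X$ or transports one $X$ one letter rightward, so a lexicographic induction (on the number of $X$'s, then on their total distance to the right end) terminates the process. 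The outcome is a linear combination of terms (word in the $\s$'s)$\cdot$(word in the $X_\ell$'s); the $X_\ell$'s commute, so they arrange into $X_1^{q_1}\cdots X_n^{q_n}$, and the $\s$-letters commute, so $\s^{-1}$ collects at the left as $\s^{-p}$ and the remaining $\s^i_J$ sort into the chosen order with each $J$ non-decreasing. A final reduction using the Jacobian relation removes any factor $\s^1_1\cdots\s^n_n$ whenever $p\ge1$. This brings every element into the form \eqref{PBW}.

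\emph{Independence.} Since $\Kc_n\subset\Lc(\Ac_\Gb)$, it suffices to show the operators \eqref{PBW} are linearly independent on $\Ac_\Gb$. On a typical element one computes
\begin{equation*}
\bigl(\s^{-p}\s^{i_1}_{J_1}\cdots\s^{i_m}_{J_m}X_1^{q_1}\cdots X_n^{q_n}\bigr)(fU^\ast_\vp)
=(\det\vp')^{-p}\,\prod_r\p_{J_r}\vp^{i_r}\;\prod_s\p_s^{q_s}f\cdot U^\ast_\vp .
\end{equation*}
By Borel's theorem pick $\vp\in\Gb$ whose $\infty$-jet at $0$ has algebraically independent coordinates $\p_{j_1}\cdots\p_{j_k}\vp^i(0)$, subject only to $\det\vp'(0)\ne0$, and pick $f$ with algebraically independent derivatives $\p^q f(0)$. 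A vanishing linear combination of the operators \eqref{PBW}, applied to $fU^\ast_\vp$ and evaluated at $x=0$, becomes a vanishing combination of (monomial in the derivatives of $f$)$\times$(Laurent monomial $(\det\vp'(0))^{-p}\prod_r\p_{J_r}\vp^{i_r}(0)$ in the jet of $\vp$). Separating by the $X$-part eliminates the $f$-dependence; and a leading-term argument, in a monomial order for which the diagonal term $\s^1_1\cdots\s^n_n$ leads the Jacobian, shows that two distinct \emph{reduced} normal forms \eqref{PBW} cannot produce the same Laurent monomial in the jet coordinates of $\vp$ — the point being that $\det(\s^i_j)$ is not itself a monomial (for $n=1$ this is exactly why $\s^1_1$ must be barred once $p\ge1$). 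Hence every coefficient vanishes.

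\emph{The main obstacle} is this independence step, equivalently the assertion that $\Fc_\Kc$ carries no relations beyond commutativity and the single Jacobian identity: a priori the functions $\p_{j_1}\cdots\p_{j_k}\vp^i$ read off a single diffeomorphism could satisfy extra constraints, and it is precisely the freedom in choosing the jet of $\vp$ (Borel's theorem plus the openness of $\GL_n$) that rules this out. Once $\Fc_\Kc$ is thus pinned down, the Proposition is exactly the Poincar\'e--Birkhoff--Witt theorem for the bicrossed product $\Fc_\Kc\,\bi\,U(\Fg)$ with $\Fg=\langle X_1,\ldots,X_n\rangle$ abelian, refined in \S\ref{S2} and covered by the Lie--Hopf formalism of~\cite{RS}.
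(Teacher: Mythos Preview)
The paper gives no argument at all: its ``proof'' is the single line \emph{Similar to that of~\cite[Proposition 1.3]{mr09}}. Your two-step scheme (straightening into normal form for spanning; evaluation on $fU^\ast_\vp$ at a point together with Borel's lemma for independence) is precisely the method used in \cite{mr09} for the frame-bundle Hopf algebra $\Hc_n$, transported here to the base. So your proposal is correct and is the intended approach.

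A couple of minor comments. Your remark that the monomials \eqref{PBW} must be taken in \emph{reduced} form with respect to the Jacobian relation \eqref{rel-sigma-det} is well taken; the paper's statement is slightly informal on this point (as is its prototype in \cite{mr09}), and your leading-term argument for the localization $\Cb[\s^i_{J}]_{\det}$ is the right way to make it precise. In the independence step, the phrase ``algebraically independent'' applied to the numerical jet coordinates is a mild abuse: what you really use is that a vanishing linear combination of operators yields a polynomial identity in the jet variables of $\vp$ which, by Borel's theorem and the Zariski-openness of $\GL_n$, must vanish coefficientwise. Phrased that way the argument is clean and avoids invoking transcendence of specific real numbers.
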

\proof Similar to that of~\cite[Proposition 1.3]{mr09} .
\endproof
\smallskip

We use the action of $\Kc_n$ on $\Ac_\Gb$ and the corresponding Leibnitz rules, such as
 \eqref{Leib1},  \eqref{Leib2}, to equip $\Kc_n$
with the bialgebra structure defined by the condition
\begin{align}
&\D(k)=k\ps{1}\ot k\ps{2} \qquad \text{iff}\qquad k(ab)=k\ps{1}(a)k\ps{2}(b) ,\\
&\ve(k)1_\Ac=k(1).
\end{align}
In particular,
\begin{align}\label{D-X-l}
&\D(X_\ell)=X_\ell\ot 1+ \s^k_\ell\ot X_k,\\\label{D-s-i-j}
&\D(\s^i_j)=\s^k_j\ot \s^i_k,\\
&\D(\s)=\s\ot \s, \quad \D(\s^{-1})=\s^{-1}\ot \s^{-1},\\  \label{D-s-i-j-k}
&\D(\s^i_{j,k})=\s^m_{j,k}\ot \s^i_m+ \s^\ell_k\s^m_j\ot \s^i_{m,l} , \\
&\D(\s^i_{j_1,\ldots,j_k})=[\D(X_{j_k}), \D(\s^i_{j_1,\ldots,j_{k-1}})], \\
&\ve(\s)=\ve(\s^{-1})=1,\qquad \ve(\s^i_j)=\d^i_j, \qquad \ve(X_\ell)=\ve(\s^i_{j_1,\ldots,j_k})=0.
\end{align}


Let $\Kc_{\rm  ab}$ be the commutative polynomial algebra generated by $\s^i_{j_1, \ldots, j_k}$ and $\s^{-1}$. It is obvious that $\Kc_{\rm  ab}$ is a subbialgebra of $\Kc_n$.

For  $k\in \Kc_{\rm  ab}$ we define $\gbar_\Kc(k):\Gb \ra C^{\infty}(V)$ by
\begin{equation} 
\gbar_\Kc(k)(\psi)=k(U^\ast_\psi)U_\psi.
\end{equation}
One readily checks that the following cocycle property holds
\begin{equation}\label{cocycle-property}
\gbar_\Kc(k)(\psi_1\psi_2)=\gbar_\Kc(k\ps{1})(\psi_2)\gbar_\Kc(k\ps{2})(\psi_1)\circ\psi_2.
\end{equation}
Using the above operators
we then define the map $S:\Kc_{\rm  ab}\ra \Kc_{\rm  ab}$ by
\begin{equation}\label{def-S-F}
S(f)(g U^\ast_\psi)=\gbar_\Kc(f)(\psi^{-1})\circ\psi\cdot   g\;U^\ast_{\psi}.
\end{equation}

\begin{lemma}
The map $S$ defined in \eqref{def-S-F} is the antipode of $\Kc_{ \rm  ab}$ and hence $\Kc_{\rm  ab}$ is a Hopf algebra.
\end{lemma}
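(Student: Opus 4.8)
The plan is to verify directly that the map $S$ of \eqref{def-S-F}, together with the coproduct $\D$ and counit $\ve$ already defined on $\Kc_{\rm ab}$, satisfies the antipode axiom $m\circ(S\ot\id)\circ\D = \eta\circ\ve = m\circ(\id\ot S)\circ\D$. Since $\Kc_{\rm ab}$ is by construction the polynomial algebra generated by the $\s^i_{j_1,\ldots,j_k}$ and $\s^{-1}$, which is commutative, and its bialgebra structure is the restriction of that of $\Kc_n$ (as noted in the excerpt), it suffices to check the convolution identity on algebra generators and then extend multiplicatively. The key observation making the verification tractable is that the operators in $\Kc_{\rm ab}$ act on $\Ac_\Gb$ simply by pointwise multiplication by a smooth function depending on $\vp$, so that the whole computation reduces to an identity about functions on $\Gb$ with values in $C^\infty(V)$; the cocycle property \eqref{cocycle-property} of $\gbar_\Kc$ is the exact combinatorial engine that converts the convolution $S\ast\id$ into the value of $\gbar_\Kc$ at $\psi^{-1}\psi = e$.

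First I would record the faithfulness of the action: since $\Kc_n$ is defined as a subalgebra of $\Lc(\Ac_\Gb)$ and $\Kc_{\rm ab}$ acts by $k(fU^\ast_\vp) = \gbar_\Kc(k)(\vp)\circ\vp^{-1}\cdot fU^\ast_\vp$ (rewriting $k(U^\ast_\psi)U_\psi = \gbar_\Kc(k)(\psi)$), an element of $\Kc_{\rm ab}$ is determined by the function $\psi\mapsto\gbar_\Kc(k)(\psi)$. Then I would translate the coproduct into a statement about $\gbar_\Kc$: from $\D(k) = k\ps1\ot k\ps2$ and $k(ab)=k\ps1(a)k\ps2(b)$ one reads off precisely the cocycle identity \eqref{cocycle-property}, i.e. $\gbar_\Kc$ is a (nonabelian) $1$-cocycle on $\Gb$ valued in $C^\infty(V)$. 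Next I would verify that $S$ as defined in \eqref{def-S-F} actually lands in $\Kc_{\rm ab}$ and is an algebra anti-homomorphism (here, since $\Kc_{\rm ab}$ is commutative, anti-homomorphism $=$ homomorphism), by checking it on generators: for $\s^i_j$ one gets $S(\s^i_j) = $ the matrix inverse of the Jacobian-type matrix $(\s^i_j)$, for $\s^{-1}$ one gets $\s$, and for the higher $\s^i_{j_1,\ldots,j_k}$ one differentiates these inverse relations — all of which manifestly remain inside $\Kc_{\rm ab}$.

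The heart of the proof is the convolution computation. Applying $m\circ(S\ot\id)\circ\D$ to $k\in\Kc_{\rm ab}$ and evaluating the resulting operator on $gU^\ast_\psi$, I would compute
\begin{equation}
S(k\ps1)\bigl(k\ps2(gU^\ast_\psi)\bigr) = \gbar_\Kc(k\ps1)(\psi^{-1})\circ\psi\cdot\gbar_\Kc(k\ps2)(\psi)\circ\psi^{-1}\cdot\psi\ \ldots
\end{equation}
and then reorganize the two $\gbar_\Kc$-factors using \eqref{cocycle-property} with $\psi_1=\psi^{-1}$, $\psi_2=\psi$, so that $\gbar_\Kc(k\ps1)(\psi^{-1})\,\gbar_\Kc(k\ps2)(\psi^{-1}\cdot\ )$ collapses to $\gbar_\Kc(k)(\psi^{-1}\psi) = \gbar_\Kc(k)(e) = \ve(k)\,1$, giving $m\circ(S\ot\id)\circ\D(k) = \ve(k)\,\id$ on $\Ac_\Gb$; by faithfulness this is the required identity in $\Kc_{\rm ab}$. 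The other side $m\circ(\id\ot S)\circ\D(k)=\ve(k)$ follows by the same manipulation with the roles of $\psi$ and $\psi^{-1}$ interchanged (or, more cheaply, from the general fact that a one-sided antipode of a bialgebra is two-sided once one knows it exists and is an algebra anti-endomorphism, together with commutativity). I expect the main technical obstacle to be purely bookkeeping: carefully tracking how the pullback-by-$\psi$ operators interleave with the $\gbar_\Kc$-values when one expands $\D(k)$ for a higher generator $\s^i_{j_1,\ldots,j_k}$ (whose coproduct is given recursively by $[\D(X_{j_k}),\D(\s^i_{j_1,\ldots,j_{k-1}})]$), i.e. making the Sweedler-notation shuffle in the display above completely rigorous; once the cocycle identity \eqref{cocycle-property} is in hand, no further genuinely new idea is needed.
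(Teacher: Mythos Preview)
Your proposal is correct and follows essentially the same route as the paper: both verify that $S$ is the convolution inverse of $\Id_{\Kc_{\rm ab}}$ by evaluating on $U^\ast_\psi$ and invoking the cocycle identity \eqref{cocycle-property} to collapse $\gbar_\Kc(k\ps{1})(\psi)\,\gbar_\Kc(k\ps{2})(\psi^{-1})\circ\psi$ to $\gbar_\Kc(k)(e)=\ve(k)$. You add some extra housekeeping (faithfulness, that $S$ lands in $\Kc_{\rm ab}$ and is multiplicative) which the paper leaves implicit, but the core argument is identical.
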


\begin{proof}
We should show that $S$ is the inverse of $\Id_{\Kc_{\rm  ab}}$ in the convolution algebra $\Hom(\Kc_{\rm  ab},\Kc_{\rm ab})$.
Indeed we first verify that $S$ is the left inverse,
\begin{align}
\begin{split}
&(\Id_{\Kc_{\rm  ab}}\ast S)(f)(U^\ast_\psi)= f\ps{1} S(f\ps{2})(U^\ast_\psi)= f\ps{1}(\gbar_\Kc(f\ps{2})(\psi^{-1})\circ{\psi}\; U^\ast_{\psi})\\
&=\gbar_\Kc(f\ps{1})(\psi)\gbar_\Kc(f\ps{2})(\psi^{-1})\circ{\psi}\; U^\ast_{\psi}=\gbar_\Kc(f)(e) U^\ast_\psi= \ve(f)U^\ast_\psi.
\end{split}
\end{align}
Here in the last two equalities we have used the cocycle property \eqref{cocycle-property} of $\gbar_\Kc$ and the very definition of $\ve$. Similarly, one proves that $S$ is a
right convolution inverse to $\Id_{\Kc_{\rm  ab}}$.
\end{proof}

To equip the algebra $\Kc_n$ itself with a Hopf algebra structure, 
we shall check that the Lie algebra $V$ generated by the $X_\ell$'s
together with the copposite Hopf algebra ${\Fc_\Kc} ={\Kc_{\rm ab}}^{\rm cop}$  
form a Lie-Hopf pair in the sense of~\cite{RS}. It will follow that
the universal enveloping algebra $\Uc (V)$ of $V$ together with  ${\Fc_\Kc}$
form a matched pair of Hopf algebras (\cf \cite{maj}), from which we will
reassemble $\Kc_n$.

The Lie algebra $V$  acts on  ${\Fc_\Kc}$ from the  left  via
\begin{equation}\label{action-g-F-cop}
\rt: V\ot {\Fc_\Kc}\ra {\Fc_\Kc},\qquad X\rt f=[X,f].
\end{equation}
Explicitly, for $f \in {\Fc_\Kc}$,
\begin{align}\label{action-vs-bracket}
\begin{split}
&(X_\ell\rt f)(g U^\ast_\psi)= (X_\ell f-fX_\ell)(gU^\ast_\psi) 
= X_\ell( \gbar_\Kc(f)g U^\ast_\psi) -f(\p_\ell(g)U^\ast_\psi)\\
 &=\p_\ell(\gbar_\Kc(f)g U^\ast_\psi +\gbar_\Kc(f)\p_\ell(g) U^\ast_\psi - \gbar_\Kc(f)(\psi)\p_\ell(g)U^\ast_\psi \\
 & = X_\ell(\gbar_\Kc(f)) gU^\ast_\psi.
\end{split}
\end{align}

We also define the following right coaction of $\Fc_\Kc$ on $V$ by
\begin{equation}\label{coaction-g-F-cop}
\Db: V\ra  V\ot {\Fc_\Kc},\Db(X_\ell)= X_k\ot \s^k_\ell\;.
\end{equation}

\begin{lemma}
Via the action and coaction defined in \eqref{action-g-F-cop} and  \eqref{coaction-g-F-cop}, ${\Fc_\Kc}$ is a $ {V}$-Hopf algebra.
\end{lemma}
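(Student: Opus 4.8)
The plan is to verify the axioms of a $V$-Hopf algebra (equivalently, a Lie–Hopf algebra) in the sense of~\cite{RS}: namely that the action~\eqref{action-g-F-cop} makes $\Fc_\Kc$ a $V$-module algebra, that the coaction~\eqref{coaction-g-F-cop} extends to a coaction of $\Fc_\Kc$ on the universal enveloping algebra $\Uc(V)$ turning it into a comodule coalgebra, and — the crucial compatibility — that the action and coaction satisfy the Lie–Hopf compatibility condition relating $\D$ on $\Fc_\Kc$ to the coaction on $V$ and the bracket. Since all structure maps are defined via the faithful action on $\Ac_\Gb$, each identity can be checked by evaluating on a generic element $gU^\ast_\psi$ and using the Leibniz rules~\eqref{Leib1}--\eqref{Leib2} together with the coproduct formulas~\eqref{D-X-l}--\eqref{D-s-i-j-k}.

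First I would check that $\rt$ is an action of the Lie algebra $V$ by derivations of the algebra $\Fc_\Kc$: the Jacobi-type identity $X\rt(Y\rt f)-Y\rt(X\rt f)=[X,Y]\rt f$ is immediate because the action is the commutator bracket inside $\Lc(\Ac_\Gb)$ and $[X_i,X_j]=0$; the derivation property $X\rt(f_1 f_2)=(X\rt f_1)f_2+f_1(X\rt f_2)$ follows from the computation already recorded in~\eqref{action-vs-bracket}, which shows $X_\ell\rt f$ acts by $X_\ell(\gbar_\Kc(f))$, combined with the Leibniz rule for $X_\ell$ on $\gbar_\Kc$-values; one also needs $X\rt 1=0$ and compatibility with the counit, both trivial. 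Next I would verify that $\Db$ extends multiplicatively to an algebra map $\Uc(V)\to\Uc(V)\ot\Fc_\Kc$ and that it is coassociative and counital as a coaction — here coassociativity on generators reads $(\Db\ot\id)\Db(X_\ell)=(\id\ot\D)\Db(X_\ell)$, which by~\eqref{coaction-g-F-cop} becomes $X_m\ot\s^m_k\ot\s^k_\ell=X_k\ot\D(\s^k_\ell)$, exactly~\eqref{D-s-i-j}; counitality is $\ve(\s^k_\ell)=\d^k_\ell$. The fact that $\Uc(V)$ becomes a comodule coalgebra amounts to checking that $\Db$ is compatible with the (primitive) coproduct on $V$, which again reduces to~\eqref{D-s-i-j} and the grouplike/primitive bookkeeping.

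The main obstacle — and the only step requiring genuine care — is the Lie–Hopf compatibility axiom, which in the formulation of~\cite{RS} states that for $f\in\Fc_\Kc$ and $X\in V$ one has
\begin{equation*}
\D(X\rt f)=X\ps{1}\rt f\ps{1}\ot X\ps{2}\rt f\ps{2}+X^{\langle 0\rangle}\rt f\ot X^{\langle 1\rangle}f\ps{2},
\end{equation*}
or more precisely the identity governing how the coaction intertwines the action with $\D_{\Fc_\Kc}$; concretely for $X=X_\ell$ this unwinds, using $\D(X_\ell)=X_\ell\ot 1+\s^k_\ell\ot X_k$ and $\Db(X_\ell)=X_k\ot\s^k_\ell$, into a relation that must match the coproduct formulas~\eqref{D-s-i-j-k} and the recursive definition $\D(\s^i_{j_1,\dots,j_k})=[\D(X_{j_k}),\D(\s^i_{j_1,\dots,j_{k-1}})]$. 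The strategy here is to prove it first on the algebra generators $\s^{-1}$ and $\s^i_j$ of $\Fc_\Kc$ by direct evaluation on $gU^\ast_\psi$, then propagate to $\s^i_{j_1,\dots,j_k}$ by induction on the number of lower indices using~\eqref{bracket-X-X} and the fact that both sides of the compatibility identity are derivation-like in $f$ with respect to products; since the bracket $[X_\ell\,,\,\cdot\,]$ generates the higher $\s$'s from the lower ones, establishing the base case and checking that the compatibility axiom is stable under the bracket operation $f\mapsto X_\ell\rt f$ closes the induction. Once all axioms are in place, the conclusion that $(\Fc_\Kc, V)$ is a $V$-Hopf algebra is immediate, and the matched-pair/bicrossed-product reconstruction of $\Kc_n$ follows from~\cite{RS}.
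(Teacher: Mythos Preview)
Your proposal is correct in outline and would succeed, but it takes a more laborious route than the paper for the key compatibility axiom. You propose to verify
\[
\D(X\rt f)\,=\, X\rt f\ps{1}\ot f\ps{2}+ X\ns{1}f\ps{1}\ot X\ns{0}\rt f\ps{2}
\]
first on the algebra generators $\s^{-1},\s^i_j$ and then propagate by induction on the length of the lower multi-index, using that both sides are derivations in $f$ and that the bracket $[X_\ell,\cdot]$ generates the higher $\s^i_{j_1,\ldots,j_k}$. The paper instead proves this identity for \emph{every} $f\in\Fc_\Kc$ in one stroke: it simply evaluates $[X_\ell,f](ab)$ on a generic product $ab\in\Ac_\Gb$, expands via the Leibniz rule $X_\ell(cd)=X_\ell(c)d+\s^k_\ell(c)X_k(d)$ and the coproduct $f(ab)=f\ps{1}(a)f\ps{2}(b)$, and reads off the claimed formula after using the commutativity of $\Fc_\Kc$ (so that $\s^k_\ell f\ps{1}=f\ps{1}\s^k_\ell$). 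This universal computation makes the induction unnecessary and is what the faithful action on $\Ac_\Gb$ really buys you.

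One small gap in your checklist: the Lie--Hopf axioms also require that the coaction $\Db$ satisfy the \emph{structure identity} of $V$, i.e.\ compatibility with the Lie bracket (not just with the coalgebra structure of $\Uc(V)$). The paper disposes of this in one line by noting that $V$ is abelian and $\s^i_{j,k}=\s^i_{k,j}$, so the relevant antisymmetrized expression vanishes. You should record this explicitly; your phrase ``compatible with the (primitive) coproduct on $V$'' is the comodule-coalgebra condition, which is a separate requirement.
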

\begin{proof}
Using the coproduct of $\s^i_j$, it is straightforward to see that \eqref{coaction-g-F-cop} defines a coaction.
We need to verify that the action $\rt$ and the coaction $\Db$ satisfy the conditions required
for a Lie-Hopf pair (\cf~\cite{RS}). 

 First we should check that for any $g\in {\Fc_\Kc}$ and any $X\in  {V}$ one has
\begin{equation}
\D(X\rt g)= X\bullet \D(g)=  g\ps{1}\ot X\rt g\ps{2} + X\ns{0}\rt g\ps{1}\ot X\ns{1} g\ps{2}.
\end{equation}
Indeed if $a,b\in \Ac$  and $f\in \Kc_{\rm ab}$ then
\begin{align}\
\begin{split}
&\D(X_\ell\rt f)(ab)=[X_\ell,f](ab)= X_\ell f(ab)-fX_\ell(ab)=\\
&X_\ell(f\ps{1}(a)f\ps{2}(b))- f(X_\ell (a)b+ \s^k_\ell(a) X_k(b))=\\
&X_\ell(f\ps{1}(a))f\ps{2}(b)+ \s^k_\ell(f\ps{1}(a))X_k(f\ps{2}(b)) -\\
 &f\ps{1}(X_\ell (a))f\ps{2}(b)- f\ps{1}(\s^k_\ell(a)) f\ps{2}(X_k(b))=\\
&  [X_\ell,f\ps{1}](a) f\ps{2}(b) + \s^k_\ell f\ps{1}(a) [X_k,f\ps{2}](b)=\\
&X_\ell\rt f\ps{1}(a)f\ps{2}(b)+ (X_\ell\ns{1}f\ps{1})(a) (X_\ell\ns{0}\rt f\ps{2})(b).
\end{split}
\end{align}
Thus, for any $X\in  {V}$,
\begin{equation}
\D(X\rt f)= X\rt f\ps{1}\ot f\ps{2}+ X\ns{1}f\ps{1}\ot X\ns{0}\rt f\ps{2}.
\end{equation}
Since the Lie algebra $ {V}$ is commutative and $\s^i_{j,k}=\s^{i}_{k,j}$ the coaction  $\Db$ satisfies the structure identity of $ {V}.$ 

Finally, $\ve(X_\ell\rt f)=0$ for any $f\in \Kc_{\rm ab}$, which completes the 
verification of the axioms of a Lie-Hopf pair.
\end{proof}

As a consequence, the bicrossed product Hopf algebra ${\Fc_\Kc}\acl\Uc(V)$ is well-defined.
We define the map
\begin{equation}
\Ic: {\Fc_\Kc}\acl\Uc(V)\ra \Kc_n^{\rm cop},\qquad \Ic(f\acl u)\;=\;fu.
\end{equation}

\begin{proposition}
The above map $\Ic$ is an isomorphism of bialgebras.
\end{proposition}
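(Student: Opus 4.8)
The plan is to show that $\Ic$ is a well-defined algebra map, a coalgebra map, and a bijection, treating each property in turn and leaning on the bicrossed product formalism of~\cite{RS} together with the PBW-type basis from the Proposition above.

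First I would verify that $\Ic$ is an algebra homomorphism. By the universal property of the bicrossed product ${\Fc_\Kc}\acl\Uc(V)$, it suffices to check that the two structure maps $f \mapsto f$ and $u \mapsto u$ extend to algebra maps on ${\Fc_\Kc}$ and $\Uc(V)$ respectively (which is clear, since $\Kc_{\rm ab}$ is a subalgebra of $\Kc_n$ and $V$ generates a subalgebra isomorphic to $\Uc(V)$ because the $X_\ell$ commute), and that the cross-relations match. Concretely, the defining relation of the bicrossed product reads, for $X \in V$ and $f \in {\Fc_\Kc}$,
\begin{equation*}
(1 \acl X)(f \acl 1) = f\ps{1} \acl X\ns{1}\cdot(X\ns{0}\rt f\ps{2})^{\sim} + \ldots,
\end{equation*}
but in our situation the coaction contributes only through $\Db(X_\ell) = X_k \ot \s^k_\ell$ and the action is the commutator $X\rt f = [X,f]$; so the identity to match is precisely $X_\ell f = f X_\ell + [X_\ell, f]$ in $\Kc_n$ together with the way $\s^k_\ell$ enters via $\D(X_\ell)$. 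This is exactly the content of the Leibniz rule~\eqref{Leib1} and of the computations~\eqref{action-vs-bracket} and~\eqref{coaction-g-F-cop}, so the cross-relations in ${\Fc_\Kc}\acl\Uc(V)$ are sent to valid identities in $\Kc_n^{\rm cop}$, and $\Ic$ is an algebra map.

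Next I would check that $\Ic$ is a coalgebra map, i.e. $(\Ic \ot \Ic)\circ \D_{\acl} = \D_{\Kc_n^{\rm cop}} \circ \Ic$ and $\ve \circ \Ic = \ve$. It is enough to verify this on the generators $f = \s^i_j$ (and $\s^{-1}$, and the higher $\s^i_{J}$) sitting in ${\Fc_\Kc}\acl 1$ and on $X_\ell = 1 \acl X_\ell$, since these generate the bicrossed product as an algebra and both sides are algebra maps into a bialgebra. On ${\Fc_\Kc}\acl 1$ the coproduct of the bicrossed product is the one inherited from ${\Fc_\Kc} = {\Kc_{\rm ab}}^{\rm cop}$, and $\Ic$ restricted there is the inclusion of $\Kc_{\rm ab}$ into $\Kc_n$ with opposite coproduct on both sides, so compatibility is automatic from the formulas~\eqref{D-s-i-j}, \eqref{D-s-i-j-k}. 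On $1 \acl X_\ell$ the bicrossed coproduct is $X_\ell \ot 1 + X_\ell\ns{0}\ot X_\ell\ns{1} = X_\ell \ot 1 + X_k \ot \s^k_\ell$, which under $\Ic$ (and remembering we land in $\Kc_n^{\rm cop}$, so a $\mathrm{cop}$-flip is involved) matches~\eqref{D-X-l}. The counit identities are immediate from the listed values of $\ve$.

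Finally, bijectivity. Surjectivity is clear since the image of $\Ic$ contains all $X_\ell$, all $\s^i_j$, and $\s^{-1}$, hence all of $\Kc_n$. For injectivity I would compare linear bases: the bicrossed product ${\Fc_\Kc}\acl\Uc(V)$ has, as a vector space, the tensor-product basis consisting of (PBW monomials in the $\s$'s and $\s^{-1}$) $\acl$ (PBW monomials $X_{\ell_1}^{q_1}\cdots X_{\ell_k}^{q_k}$ in $\Uc(V)$), and $\Ic$ sends such a basis element to exactly the monomial $\s^{-p}\s^{i_1}_{J_1}\cdots\s^{i_m}_{J_m}X_{\ell_1}^{q_1}\cdots X_{\ell_k}^{q_k}$ appearing in~\eqref{PBW}. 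Since the Proposition asserts that~\eqref{PBW} is a linear basis of $\Kc_n$, the map $\Ic$ carries a basis bijectively to a basis and is therefore a linear isomorphism. Combined with the two previous paragraphs, $\Ic$ is an isomorphism of bialgebras. The main obstacle I anticipate is the bookkeeping in the algebra-map step: one must be careful that the action and coaction defining the bicrossed product genuinely reproduce the commutation relations~\eqref{bracket-X-X}, \eqref{brackets-sigma-sigma}, and~\eqref{rel-sigma-det} in $\Kc_n$ — in particular the relation $[X_k,\s^{-1}]$ involving $\s^{-2}$ — rather than just the relations among the $\s^i_j$ and $X_\ell$; verifying the extension to all of $\Kc_{\rm ab}$ (not merely the degree-one part) uses that the coaction is determined on $V$ and extended multiplicatively, which is where~\cite{RS} does the heavy lifting.
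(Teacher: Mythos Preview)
Your proposal is correct and follows essentially the same three-step structure as the paper: PBW basis for bijectivity, verification of the algebra-map property via the cross-relation, and checking the coalgebra map on generators. The one noteworthy difference is in the algebra-map step: where you invoke a ``universal property'' of the bicrossed product and try to write out the abstract cross-relation (your formula with the trailing $\ldots$ and the tilde is not quite right, and bicrossed products do not have a clean universal property in the sense you seem to intend), the paper instead works directly with the operator realization on $\Ac_\Gb$, computing $uf(gU^\ast_\psi) = u(\gbar_\Kc(f)(\psi)g)U^\ast_\psi = (u\ps{1}\rt f)u\ps{2}(gU^\ast_\psi)$ to establish $uf = (u\ps{1}\rt f)u\ps{2}$ in $\Kc_n$; this sidesteps the bookkeeping you flag at the end, since the identity is verified once and for all on the faithful module $\Ac_\Gb$ rather than generator-by-generator against the commutation relations.
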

\begin{proof}
One uses the linear basis \eqref{PBW} for $\Kc$ to see that $\Ic$ is an isomorphism of vector spaces.
Let us  check that the map $\Ic$ is an algebra homomorphism.
We first use \eqref{action-vs-bracket} to see that $uf= u\ps{1}\rt f u\ps{2}$ in $\Kc_n$,
\begin{align}
\begin{split}
&uf(g U^\ast_\psi)= u(\gbar_\Kc(f)(\psi)gU^\ast_\psi)= u(\gbar_\Kc(f)(\psi)g)U^\ast_\psi=\\
&u\ps{1}(\gbar_\Kc(f)(\psi)) u\ps{2}(g)U^\ast_\psi= ((u\ps{1}\rt f) u\ps{2})(gU^\ast_\psi).
\end{split}
\end{align}
This shows that $\Ic$ is indeed an algebra map.

To show that $\Ic$ is a coalgebra map it is necessary and  sufficient 
 to  check that $\Ic$ commutes with coproduct on the generators. Indeed,
\begin{align*}
\begin{split}
&(\Ic\ot \Ic)(\D_{{\Fc_\Kc}\acl\Uc(V)}(f\acl 1))=(\Ic\ot \Ic)(f\ps{2}\acl 1)\ot \Ic(f\ps{1}\acl 1)\\
& =f\ps{2}\ot f\ps{1}=\Db_{\Kc_{\rm cop}}(f) ; \\
&(\Ic\ot \Ic)(\D_{{\Fc_\Kc}\acl\Uc(V)}(1\acl X_\ell))=(\Ic\ot\Ic)(1\acl X_k\ot \s^k_\ell\acl 1+ 1\acl 1\ot 1\acl X_\ell)\\
&  =  X_k \ot \s^k_\ell + 1\ot X_\ell=\Db_{\Kc_{\rm cop}}(X_\ell).
\end{split}
\end{align*}
\end{proof}

\begin{corollary}
The following defines the antipode of  $\Kc_n$:
 \begin{equation}
S_\Kc(u)= S_{\Fc_\Kc}(u\ns{1})S_\Uc(u\ns{0}), \quad S_\Kc(f)=S_{\Fc_\Kc}(f).
\end{equation}
Hence $\Kc_n$ is a Hopf algebra and $\Ic$ is an isomorphism of Hopf algebras.
\end{corollary}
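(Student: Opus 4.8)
The plan is to reduce the statement to general facts about matched pairs / bicrossed products of Hopf algebras, combined with the specific fact that $\Ic$ is already known (by the preceding Proposition) to be an isomorphism of bialgebras. The essential point is that an isomorphism of bialgebras between two Hopf algebras automatically transports the antipode, so once one checks that the displayed formula for $S_\Kc$ is genuinely an antipode for the bicrossed product ${\Fc_\Kc}\acl\Uc(V)$, the Corollary follows by transport of structure along $\Ic$. Concretely, I would proceed as follows.

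First I would recall from~\cite{RS} (Lie--Hopf pair formalism) or from~\cite{maj} (matched pairs of Hopf algebras) that whenever $(\Uc,\Fc)$ is a matched pair with $\Fc$ a Hopf algebra with antipode $S_\Fc$ and $\Uc$ a Hopf algebra with antipode $S_\Uc$, the bicrossed product $\Fc\acl\Uc$ is a Hopf algebra whose antipode is given on factors by $S(f\acl 1)=S_\Fc(f)\acl 1$ and $S(1\acl u)=(S_\Uc\ot S_\Fc)\circ\,$(the relevant coaction applied to $u$), assembled using the product of $\Fc\acl\Uc$; in the Lie--Hopf notation of the excerpt this is precisely $S(u)=S_{\Fc_\Kc}(u\ns{1})\,S_\Uc(u\ns{0})$ for $u\in\Uc(V)$ and $S(f)=S_{\Fc_\Kc}(f)$ for $f\in{\Fc_\Kc}$. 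Since $\Uc(V)$ is cocommutative with its standard antipode and ${\Fc_\Kc}=\Kc_{\rm ab}^{\rm cop}$ has antipode supplied by the Lemma proved above (the map $S$ of~\eqref{def-S-F}, transported to the co-opposite), all the ingredients of the formula are available, and the verification that this $S$ is a two-sided convolution inverse of the identity on $\Fc\acl\Uc$ is the standard bicrossed-product computation; on generators it reduces to checking it on $f\acl 1$ (where it is the antipode of ${\Fc_\Kc}$) and on $1\acl X_\ell$ (where, using $\Db(X_\ell)=X_k\ot\s^k_\ell$ and $S_\Uc(X_\ell)=-X_\ell$, one gets $S_\Kc(X_\ell)=-S_{\Fc_\Kc}(\s^k_\ell)X_k$, which one checks against $\D(X_\ell)=X_\ell\ot1+\s^k_\ell\ot X_k$).

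Second, having established that ${\Fc_\Kc}\acl\Uc(V)$ is a Hopf algebra with the stated antipode, I invoke the preceding Proposition: $\Ic$ is an isomorphism of bialgebras onto $\Kc_n^{\rm cop}$. A bialgebra map between Hopf algebras commutes with antipodes, and an invertible bialgebra map transports a Hopf structure to a Hopf structure; hence $\Kc_n^{\rm cop}$ is a Hopf algebra with antipode $\Ic\circ S_{{\Fc_\Kc}\acl\Uc(V)}\circ\Ic^{-1}$. Finally one uses that a bialgebra $H$ is a Hopf algebra if and only if $H^{\rm cop}$ is, with antipodes related by $S_{H^{\rm cop}}=S_H^{-1}$; since the bicrossed-product antipode here is known to be invertible (both $S_\Uc$ and $S_{\Fc_\Kc}$ are, the latter because $\Kc_{\rm ab}$ is commutative so its antipode is an involution), $\Kc_n$ itself is a Hopf algebra, and unwinding the co-opposite on the generators $f\in{\Fc_\Kc}$ and $X_\ell\in V$ yields exactly the displayed formula for $S_\Kc$.

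The main obstacle I anticipate is purely bookkeeping: correctly matching the conventions of~\cite{RS} for the bicrossed product antipode with the co-opposite twist built into ${\Fc_\Kc}=\Kc_{\rm ab}^{\rm cop}$ and into the target $\Kc_n^{\rm cop}$ of $\Ic$, so that the signs and the order of factors in $S_\Kc(u)= S_{\Fc_\Kc}(u\ns{1})S_\Uc(u\ns{0})$ come out as stated rather than reversed. Everything else — the convolution-inverse check, the transport along $\Ic$, the passage from $\Kc_n^{\rm cop}$ back to $\Kc_n$ — is formal once that dictionary is fixed, and I would keep the written proof short by citing~\cite{RS} and~\cite{maj} for the general bicrossed-product antipode and only spelling out the computation on the generators $\s^i_j$ and $X_\ell$.
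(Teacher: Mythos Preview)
Your proposal is correct and follows essentially the same route as the paper: cite the standard bicrossed-product antipode formula (the paper quotes it as \eqref{anti}), use that $\Uc(V)$ is cocommutative and ${\Fc_\Kc}$ commutative to write the inverse antipode explicitly, and then transport the Hopf structure along the bialgebra isomorphism $\Ic$ via $S_\Kc=\Ic\circ S^{-1}_{{\Fc_\Kc}\acl\Uc}\circ\Ic^{-1}$. The paper's proof is more terse and does not spell out the generator-level checks you outline, but the logical skeleton is identical.
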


\begin{proof}
One uses the antipode definition for a bicrossed product
\begin{equation}\label{anti}
S(f\acl u)=(1\acl S(u\ns{0}))(S(fu\ns{1})\acl 1) , \qquad f \in \Fc , \, u \in \Uc.
\end{equation}
and the fact that $\Uc$ is cocommutative and ${\Fc_\Kc}$ is commutative to see
\begin{equation}
S^{-1}_{{\Fc_\Kc}\acl \Uc}(f\acl u)= S_{\Fc_\Kc}(fu\ns{1})\acl  S_\Uc(u\ns{0}).
\end{equation}
Since $\Ic$ is isomorphism of bialgebras and ${\Fc_\Kc}\acl \Uc$ is a Hopf algebra,  $\Ic$ induces a unique  antipode  on $\Kc_n$. Equivalently,
 $S:\Kc\ra \Kc$ is defined by  the identity
 $$S_\Kc= \Ic\circ {S^{-1}_{{\Fc_\Kc}\acl \Uc}}\circ\Ic^{-1}.$$
\end{proof}

One sees that
\begin{align}
S(\s^i_j)= \s^{-1} m^j_i  ,
\end{align}
where $m^p_q= (-1)^{p+q}\det M^p_q$, with $M^p_q$ signifying
the $(n-1)\times( n-1)$ matrix obtained by removing the $q$th row and $p$th 
column of the matrix $[\s^i_j]$. Also,
\begin{align}\label{S-K}
\begin{split}
&S(\s)=\s^{-1},\quad S(\s^{-1})=\s, \\
&S(\s^i_{jk})= -S(\s^i_r)S(\s^s_j)S(\s^t_k)\s^r_{st},\\
&S(X_k)=-S(\s^\ell_k)X_\ell.
\end{split}
\end{align}


\section{Bicrossed product decomposition of $\Fc_\Kc$} \label{S2}
\label{S2}
   We start  with the decomposition of $\Gb= \Tbo\cdot \Gb^\dagger$, where
\begin{align}\label{decopmpose-G-}
&\Tbo=\left\{\vp\in \Gb\mid \vp(x)=x+b,\;\; \text{for some }\;\; b\in \Rb^n \right\},\\\label{decopmpose-G+}
&\Gb^\dagger= \{\psi\in\Gb \mid \psi(0)=0\}.
\end{align}
Any $\phi\in\Gb$  can be written uniquely as
  \begin{equation} \label{phi-dec}
  \phi= \vp_\phi\circ\psi_\phi, \qquad \qquad  \vp_\phi\in \Tbo,\; \;\psi_\phi\in \Gb^\dagger.
  \end{equation}
   where
\begin{equation}
\vp_\phi(x)= x+\phi(0), \quad \psi_\phi(x)=\phi(x)-\phi(0).
\end{equation}
This yields that $(\Tbo,\Gb^\dagger)$ is a matched pair of groups with respect
to the left action of $\Gb^\dagger$ on $\Tbo$ and the right action of $\Tbo$ on $\Gb^\dagger$ 
determined by
\begin{equation}
\psi\circ\vp= (\psi\rt \vp)\circ(\psi\lt\vp).
\end{equation}
Thus, for  $\vp\in G^-$ defined by $\vp(x)=x+b$, and for $\psi\in G^+$ one has
\begin{equation}
\psi\rt \vp(x)=x+\psi(b), \qquad \psi\lt \vp(x)=\psi(x+b)-\psi(b).
\end{equation}
The first equation shows that under the canonical identification of $\Rb^n$ with 
the translation group, $\vp \in \Tbo \leftrightarrow b =\vp(0) \in \Rb^n$, 
the action of  $\Gb^\dagger$ on $\Tbo$ is just its natural action on $\Rb^n$. 
\smallskip

Let $\Fc(\Gb^\dagger)$ be the commutative unital algebra of functions on $\Gb^\dagger$
generated by the coefficients of the Taylor expansion at $0$,
\begin{align} \label{betas}
\begin{split}
&\b^i_{j_1,\ldots,j_k}(\psi)=\p_{j_k}\ldots \p_{j_1}\psi^i(x)\at{x}, \qquad 1\le i, j_1, \ldots, j_k\le n, \,\; \psi\in \Gb^\dagger,\\ &\b^{-1}(\psi)= \frac{1}{\det(\b^i_j\psi)}.
\end{split}
\end{align}

One proves as in~\cite[Proposition 2.5]{mr09} that the group  structure of $\Gb^\dagger$ induces a  Hopf algebra structure on $\Fc(\Gb^\dagger)$, determined by
\begin{align}
&\D(f)(\psi_1,\psi_2)= f(\psi_1\circ\psi_2), \qquad \psi_1,\psi_2\in \Gb^\dagger,\\
&S(f)(\psi)=f(\psi^{-1}), \qquad \psi\in \Gb^\dagger,\\
&\ve(f)=f(e).
\end{align}
One notes that for $\s^i_{j_1,\ldots,j_k}\in \Kc_n$ we have
\begin{equation}
\b^i_{j_1,\ldots,j_k}(\psi)=\gbar(\s^i_{j_1,\ldots,j_k})(\psi)(0), \qquad \b^{-1}(\psi)=\gbar(\s^{-1})(\psi)(0).
\end{equation}
There is a unique isomorphism of Hopf algebras
 \begin{equation}\i:{\Kc_{\rm ab}}^{\rm cop}\cong \Fc_\Kc\ra \Fc(\Gb^\dagger),
 \end{equation}
 with the property that
\begin{equation}
\i(\s^{-1})=\b^{-1}, \quad \i(\s^i_{j_1,\ldots,j_k})=\b^i_{j_1,\ldots,j_k}, \qquad 1\le i,j_1,\ldots,j_k\le n, \;\; k\in \Nb.
\end{equation}


One uses the right action of $\Tbo$ on $\Gb^\dagger$ to get a left action of $\Tbo$ on $\Fc(\Gb^\dagger)$ by
\begin{equation}\label{action-G-F}
\vp\rt f(\psi)= f(\psi\lt \vp).
\end{equation}
We identify $ {V}$ with the Lie algebra of the Lie group $\Tbo$, as the left invariant vector fields, and hence get the following action of $ {V}$ on $\Fc(\Tbo)$
\begin{equation}\label{action-V-F(G)}
(X\rt f)(\psi)= \dt f(\psi\lt  \exp(tX)), \qquad f\in \Fc(\Gb^\dagger),\;\psi\in \Gb^\dagger\; X\in  {V}.
\end{equation}

To illustrate this action on the generators we compute:
\begin{align}
\begin{split}
&\p_j(\psi\lt \exp(tX_\ell))^i(x)\at{x}= \\ &\p_j(\psi^i(x+te^\ell)-\psi^i(te^\ell))=(\p_j\psi^i)(x+te^\ell)\at{x}=(\p_j\psi^i)(te^\ell)
\end{split}
\end{align}
and continue by
\begin{align}
\begin{split}
&X_\ell\rt\b^i_j(\psi)= \dt \b^i_j(\psi\lt \exp(tX_\ell))=\dt (\p_j\psi^i)(te^\ell)=\\
&\p_\ell\p_j\psi^i(x)\at{x} =\b^i_{j,l}(\psi).
\end{split}
\end{align}
Similarly one proves that
\begin{equation}
X_\ell\rt \b^i_{j_1,\ldots, j_k}= \b^i_{j_1,\ldots, j_k,l}.
\end{equation}
One observe that the action of $\Gb^\dagger$ on $\Tbo$ is 
smooth and hence  induces an action of $\Gb^\dagger$ on ${V}$,
\begin{equation}
\psi\rt X(g)=\dt g(\psi\rt \exp(tX)), \qquad g\in C^{\infty}(\Rb^n).
\end{equation}

In dual fashion, the action of $\Gb^\dagger$ on ${V}$ defines a coaction
\begin{equation}\label{coaction-V}
\Db_V:V\ra {V}\ot \Fc(\Gb^\dagger),
\end{equation}
defined by
\begin{equation}\label{coaction-F-1-V}
\Db_V(X_\ell)= X_j\ot f^j_\ell, \qquad \text{if and only if}\;\; f^j_\ell(\psi)X_j= \psi\rt X_\ell
\end{equation}
Let us explicitly compute this coaction. 
Since the action of $\Gb^\dagger$ on $\Tbo$ is the natural one,
\begin{align}
&\psi\rt \exp (tX_\ell)(x)=x+ \psi(te^\ell).
\end{align}
So for any $g\in C_c^{\infty}(\Rb^n)$ one has,
\begin{align}
\begin{split}
&(\psi\rt X_\ell)(g)(x)=\dt g(\psi\rt \exp (tX_\ell))=\\
&\dt g(\psi(te^\ell))=(\p_k g)(x)\p_\ell\psi^k(x)\at{x}= X_k(g)(x)\b^i_j(\psi).
\end{split}
\end{align}
We thus proved that 
$$\Db_V(X_\ell)=X_k\ot \b^k_\ell .
$$
 This show that 
  \eqref{action-V-F(G)},  makes $\Fc(\Gb^\dagger)$ a $\Uc(V)$-module algebra and
the map $\i:\Fc_\Kc \ra \Fc(\Gb^\dagger)$ is ${V}$-linear, where action of ${V}$ on $\Fc_\Kc$ is defined by \eqref{action-g-F-cop}. Via the coaction defined by \eqref{coaction-F-1-V} $\Fc(\Gb^\dagger)$ is a ${V}$-Hopf algebra. The map $\i$ induces the following isomorphism
\begin{equation}
\i\acl\Id:\Fc_\Kc\acl \Uc(V)\ra \Fc(\Gb^\dagger)\acl \Uc(V),
\end{equation}

Now we decompose the group $\Gb^\dagger$ into $\Gb^\dagger_0\cdot \Nbo$, where
\begin{align}\label{decopmpose-G1+}
&\Gb^\dagger_0=\{\psi\in \Gb^\dagger\mid \; \psi(x)=ax,\quad a\in \GL_n\},\\\label{decopmpose-G2+}
&\Nbo=\{\psi\in \Gb^\dagger\mid\; \psi'(0)=\Id\}.
\end{align}
Precisely, for any $\psi\in c$ we define $\lambda \in \Gb^\dagger_0$, and 
$\nu\in \Nbo$ by
\begin{equation}
\lambda_\psi(x)=\psi'(0)x, \qquad \nu_\psi(x)= (\psi'(0))^{-1}\psi(x).
\end{equation}
This unique  decomposition determines the actions  of $\Gb^\dagger_0$ on $\Nbo$  and 
of $\Nbo$ on $\Gb^\dagger_0$, by the prescription
\begin{equation}\label{decomposition-G-1-G-2}
\nu\circ\lambda=(\nu\rt \lambda)\circ (\nu\lt \lambda) ,
\end{equation}
 for $\lambda\in \Gb^\dagger_0$ and $\nu\in \Nbo$. More exactly, with
 $\lambda (x) = \abo \cdot x$, \, $\abo \in \GL_n(\Rb)$, 
\begin{equation}
\nu\rt \lambda=\lambda, \quad \text{and} \quad (\nu\lt \lambda) (x)= \abo^{-1}\nu(\abo\cdot x) ,
\end{equation}
reflecting the fact that $\Nbo$ is a normal subgroup of  $\Gb^\dagger$.

We let $\Fc(\Gb^\dagger_0)$  be the algebra generated by the functions
\begin{align}
\begin{split}
\a^i_j(\lambda)= \p_j\lambda^j(x)\at{x} = a^i_j \, , \quad
\a^{-1}(\lambda)=\det(\abo) ,  \, \, \text{for}
\quad \lambda (x) = \abo \cdot x ,
\end{split}
\end{align}
\ie the algebra $\Pc(\GL_n)$ of regular functions on $\GL_n(\Rb)$.

Similarly, we let $\Fc(\Nbo)$ be the algebra generated by the restrictions to
$\Nbo$ of the Taylor coordinates \eqref{betas} on $\Gb^\dagger$,
\begin{align}\label{F-2-alpha}
\begin{split}
\a^i_{j_1, \ldots,j_k}(\nu):= \b^i_{j_1, \ldots,j_k}(\nu)= \p_{j_k}\cdots\p_{j_1}\psi(x)\at{x}
, \qquad  \nu\in \Nbo.
\end{split}
\end{align}
Once again, $\Fc(\Gb^\dagger_0)$ and $\Fc(\Nbo)$ are in fact Hopf algebras with the usual structure, 
\begin{align}
\D(f)(\psi_1,\psi_2)= f(\psi_1\circ\psi_2), \quad \ve(f)=f(e), \quad S(f)(\psi)=f(\psi^{-1}).
\end{align}
In particular
\begin{align}
&\D(\a^i_j)=\a^i_k\ot \a^k_j, \quad \D(\a^{-1})=\a^{-1}\ot \a^{-1},\\
&\D(\a^i_{j,k})=\a^i_{j,k}\ot 1+1\ot \a^i_{j,k}
\end{align}
Thus, the restriction maps of Hopf algebras
\begin{align}
\begin{split}
&\pi_1:\Fc(\Gb^\dagger)\ra \Fc(\Gb^\dagger_0), \qquad \pi_2:\Fc(\Gb^\dagger)\ra \Fc(\Nbo), \\
&\pi_1(\b^i_j)=\a^i_j,  \quad \pi_1(\b^{-1})=\a^{-1},\quad \pi_1(\b^i_{j_1,\ldots,j_k})=0,\\
&\pi_2(\b^i_j)=\d^i_j,\quad \pi_2(\b^{-1})=1,\quad \pi_2(\b^i_{j_1,\ldots,j_k})=\a^i_{j_1, \ldots,j_k}
\end{split}
\end{align}
are maps of Hopf algebras. These projections admit as cross-sections
the obvious inclusion maps $\Ic_i:\Fc(\Gb_i^+)\ra\Fc(\Gb^\dagger),$ $i=1,2$.
\begin{equation}
   \Ic_1(\a^i_j)=\b^i_j, \quad \Ic_1(\a^{-1})=\a^{-1}, \quad \Ic_2(\a^i_{j_1,\ldots,j_k})=\b^i_{j_1,\ldots,j_k}.
\end{equation}
  
\begin{lemma}
The map $\Db:\Fc(\Nbo)\ra   \Fc(\Nbo)\ot \Fc(\Gb^\dagger_0)$ defined by
\begin{align} \label{coaction-F-1-F-2}
\Db(f)(\nu,\lambda)=f(\nu\lt \lambda)  ,
\end{align}
is a coaction and makes $\Fc(\Nbo)$ a $\Fc(\Gb^\dagger_0)$ a comodule Hopf algebra.
\end{lemma}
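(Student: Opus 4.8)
The plan is to verify the Lie--Hopf (equivalently, matched-pair) axioms for $\Fc(\Nbo)$ equipped with the coaction \eqref{coaction-F-1-F-2}, exactly in parallel with the verification already carried out for $\Fc_\Kc$ over $\Uc(V)$ and for $\Fc(\Gb^\dagger)$ earlier in this section. First I would confirm that $\Db$ is a right coaction: coassociativity $(\Db\ot\Id)\circ\Db = (\Id\ot\D)\circ\Db$ and counitality $(\Id\ot\ve)\circ\Db = \Id$ both follow formally from the associativity of the right action $\lt$ of $\Gb^\dagger_0$ on $\Nbo$ recorded in \eqref{decomposition-G-1-G-2} and from $\nu\lt e = \nu$; this is the usual ``action of a group gives a coaction of its function Hopf algebra'' argument, already used for $\Db_V$ in \eqref{coaction-F-1-V}. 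On generators one computes $\Db(\a^i_{j_1,\ldots,j_k})$ from $(\nu\lt\lambda)(x) = \abo^{-1}\nu(\abo\cdot x)$: differentiating $k$ times and evaluating at $0$ yields an expression of the form $\a^p_{q_1,\ldots,q_k}\ot (\text{polynomial in }\a^i_j,\a^{-1})$, which makes the coaction explicit and visibly lands in $\Fc(\Nbo)\ot\Fc(\Gb^\dagger_0)$.

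Next I would check the compatibility conditions that upgrade this to a comodule-Hopf-algebra (i.e.\ $\Fc(\Gb^\dagger_0)$-comodule-algebra) structure, in the sense of \cite{RS} used throughout: that $\Db$ is an algebra map, $\Db(1) = 1\ot 1$, and that $\Db$ is compatible with the coproduct and counit of $\Fc(\Nbo)$, namely $(\Id\ot\D_{\Fc(\Nbo)})$-type and the module-coalgebra identity
\[
\D_{\Fc(\Nbo)}(f\ns{0})\ot f\ns{1} \;=\; f\ps{1}\ns{0}\ot f\ps{2}\ns{0}\ot f\ps{1}\ns{1}f\ps{2}\ns{1},
\]
together with $\ve(f\ns{0})f\ns{1} = \ve(f)1$. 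Each of these reduces to a property of the group-level decomposition \eqref{decomposition-G-1-G-2}: that $\lt$ is a right action \emph{by group automorphisms} of $\Gb^\dagger_0$ on $\Nbo$ (which is exactly the content of $\Nbo$ being normal in $\Gb^\dagger$, already noted after \eqref{decomposition-G-1-G-2}), so that $\Db(f)(\nu_1\nu_2,\lambda) = f((\nu_1\nu_2)\lt\lambda) = f((\nu_1\lt\lambda)(\nu_2\lt\lambda))$ dualizes precisely to the coproduct-compatibility, and the fact that $e\lt\lambda = e$ gives the counit-compatibility. Since $\Fc(\Nbo)$ is commutative these identities are moreover symmetric in the relevant legs, which simplifies the bookkeeping.

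Concretely, I would package the argument so that it invokes \cite[Proposition 2.5]{mr09} (already cited in this section for the analogous statement about $\Fc(\Gb^\dagger)$) applied to the subgroup $\Nbo$, giving the Hopf structure on $\Fc(\Nbo)$ for free, and then observe that the matched-pair data $(\Gb^\dagger_0,\Nbo)$ furnished by \eqref{decomposition-G-1-G-2} dualizes, under the Hopf-algebra isomorphisms identifying $\Fc(\Nbo)$ and $\Fc(\Gb^\dagger_0)$ with their Taylor-coordinate presentations, to the comodule-algebra structure asserted in the lemma. The main obstacle — really the only place that is not completely formal — is checking that the explicit formula for $\Db$ on the higher generators $\a^i_{j_1,\ldots,j_k}$ with $k\ge 2$ is consistent with all the structure maps simultaneously; the conjugation $\nu\mapsto\abo^{-1}\nu(\abo\,\cdot\,)$ produces, upon repeated differentiation, sums of terms in which the $\GL_n$-factor $\abo$ enters with multiplicity $k+1$ (once in the outer $\abo^{-1}$ and once per inner chain-rule factor), so one must track these indices carefully to see that the resulting element genuinely lies in $\Fc(\Nbo)\ot\Fc(\Gb^\dagger_0)$ and that coassociativity holds on the nose rather than merely up to the relations in $\Fc(\Gb^\dagger)$. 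I would handle this by doing the $k=2$ case explicitly (as a sanity check analogous to the $\D(\a^i_{j,k})$ computation above) and then invoking the group-theoretic identity \eqref{decomposition-G-1-G-2} to cover all $k$ uniformly, since at the group level there is nothing to check beyond associativity of $\lt$, which holds by construction.
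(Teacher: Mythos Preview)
Your proposal is correct and follows essentially the same approach as the paper: both dualize the group-level properties of the right action $\lt$ (associativity gives coassociativity, $e\lt\lambda=e$ gives the counit condition, and the fact that $\lt$ acts by automorphisms of $\Nbo$---equivalently $\nu\rt\lambda=\lambda$---gives coproduct compatibility). The only difference is cosmetic: the paper avoids your explicit generator computations by writing $\Db(f)(\nu,\lambda)=\hat f(\nu\circ\lambda)=\pi_2(\hat f\ps{1})\ot\pi_1(\hat f\ps{2})(\nu,\lambda)$ via the inclusion $\hat f(\psi)=f(\nu_\psi)$, which immediately shows $\Db$ lands in $\Fc(\Nbo)\ot\Fc(\Gb^\dagger_0)$ and lets all verifications be done uniformly at the group level---so your proposed $k=2$ sanity check and index-tracking are unnecessary.
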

\begin{proof}
Denoting the inclusion $\Fc(\Nbo)\hookrightarrow \Fc(\Gb^\dagger)$ by
\begin{equation}
\hat{f}(\psi)= f(\nu_\psi), 
\end{equation}
one observes that
\begin{align}
&\Db(f)(\nu,\lambda)=f(\nu\lt \lambda)=\hat f(\nu\circ\lambda)=\\ &\hat{f}\ps{1}(\nu)\hat f\ps{2}(\lambda)=\pi_2(\hat f\ps{1})\ot \pi_1(\hat f\ps{2})(\nu,\lambda).
\end{align}
Since $\lt$ is a group action, it is easily seen that $\Db$ is a coaction.
The fact that $\Db$ preserves the product,
$$
\Db(f^1f^2)=\Db(f^1)\Db(f^2),
$$
is also clear.
 
 Next we show that
$\Fc(\Nbo)$ is $\Fc(\Gb^\dagger_0)$-comodule coalgebra.  Indeed,
\begin{align}
\begin{split}
&f\ns{0}\ps{1}\ot f\ns{0}\ps{2}\ot f\ns{1}(\nu_1,\nu_2,\lambda)=\\
&f\ns{0}\ot f\ns{1}(\nu_1\circ\nu_2,\lambda)= f((\nu_1\circ\nu_2)\lt \lambda)=\\
&f((\nu_1\lt(\nu_2\rt\lambda))\circ (\nu_2\lt \lambda))=\\
&f((\nu_1\lt\lambda)\circ (\nu_2\lt \lambda))=\\
&f\ps{1}(\nu_1\lt\lambda)f\ps{2}(\nu_2\lt \lambda)= \\
&f\ps{1}\ns{0}(\nu_1)f\ps{1}\ns{1}(\lambda)f\ps{2}\ns{0}(\nu_2)f\ps{2}\ns{1}(\lambda)=\\
&f\ps{1}\ns{0}\ot f\ps{2}\ns{0}\ot f\ps{1}\ns{1}f\ps{2}\ns{1}(\nu_1,\nu_2,\lambda).
\end{split}
\end{align}
The last required condition is also satisfied:
\begin{align}
\ve(f\ns{0})f\ns{1}(\lambda)=\Db(f)(e,\lambda)=f(e\lt \lambda)=f(e)=\ve(f)1_{\Fc(\Gb_1G^+)}(\lambda).
\end{align}
\end{proof}

We note that, with the notation introduced above, the following identity holds:
\begin{equation}
\check\a^i_s\hat{\a}^s_{j_1,\ldots, j_k}=\b^i_{j_1,\ldots,j_k}\;.
\end{equation}
\smallskip

Since the action of $\Fc(\Nbo)$ on $\Fc(\Gb^\dagger_0)$ is trivial, that is  given by $\ve$ , we see that all conditions of   matched pair of Hopf algebras  are satisfied and we have the Hopf algebra $\Fc(\Gb^\dagger_0)\acl\Fc(\Nbo)$. Moreover, as an algebra $\Fc(\Gb^\dagger_0)\acl\Fc(\Nbo)$ is just $\Fc(\Gb^\dagger_0)\ot\Fc(\Nbo)$ and as a coalgebra it is $\Fc(\Gb^\dagger_0)\cl\Fc(\Nbo)$.
We will therefore adopt the latter notation.

There also is a natural left coaction of $\Fc(\Nbo)$ on $\Fc(\Gb^\dagger_0)$, which
we record below.

\begin{lemma}
The map $\Db_L:\Fc(\Nbo)\ra \Fc(\Gb^\dagger_0)\ot \Fc(\Nbo)$ defined by
\begin{align}
 \Db_L(f)(\lambda,\nu)=\Ic_2(f)(\lambda,\nu).
\end{align} 
 defines a left coaction, which satisfies
\begin{equation}\label{left-coaction-gamma}
\Db_L(\a^i_{j_1, \ldots,j_k})=\a^i_s\ot\a^s_{j_1,\ldots,j_k}.
\end{equation}
\end{lemma}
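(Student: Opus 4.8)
The plan is to exhibit $\Db_L$ as the restriction to $\Fc(\Nbo)$ of the comultiplication-type map coming from the normal decomposition $\Gb^\dagger = \Gb^\dagger_0 \cdot \Nbo$, and then verify the comodule axioms by purely group-theoretic manipulation, exactly parallel to the preceding lemma. First I would unwind the definition: for $f \in \Fc(\Nbo)$, the element $\Ic_2(f) \in \Fc(\Gb^\dagger)$ is the function $\psi \mapsto f(\nu_\psi)$, so $\Db_L(f)(\lambda,\nu) = \Ic_2(f)(\lambda \circ \nu)$; one then reads this through the factorization $\lambda \circ \nu = (\lambda \rt \nu) \circ (\lambda \lt \nu)$ of $\Gb^\dagger$ into $\Gb^\dagger_0 \cdot \Nbo$ — but since $\Nbo$ is normal, $\lambda \circ \nu \circ \lambda^{-1} \in \Nbo$, so with $\lambda(x) = \abo \cdot x$ we get $\nu_{\lambda \circ \nu} (x) = \abo^{-1}(\lambda \circ \nu)(x) = \abo^{-1}\nu(\abo \cdot x)$ after the linear part is absorbed, giving a value depending on both $\lambda$ and $\nu$. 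Thus $\Db_L(f)$ genuinely lands in $\Fc(\Gb^\dagger_0) \ot \Fc(\Nbo)$, which is the first thing to check (that the output is a polynomial in the $\a^i_j, \a^{-1}$ on the left and the $\a^i_{j_1,\ldots,j_k}$ on the right).

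Next I would verify the two coaction axioms. Coassociativity, $(\Id \ot \Db_L)\Db_L = (\Db_{\Fc(\Gb^\dagger_0)} \ot \Id)\Db_L$, follows by evaluating both sides on a triple $(\lambda_1, \lambda_2, \nu)$ and using associativity of composition in $\Gb^\dagger$ together with the fact that the assignment $\psi \mapsto \nu_\psi$ is compatible with the $\Gb^\dagger_0$-action in the appropriate sense — concretely, $\nu_{\lambda_1 \circ \lambda_2 \circ \nu}$ unwinds the same way whether one groups $\lambda_1 \circ \lambda_2$ first or $\lambda_2 \circ \nu$ first. Counitality, $(\ve_{\Fc(\Gb^\dagger_0)} \ot \Id)\Db_L = \Id$, is immediate: evaluating at $\lambda = e$ gives $\Ic_2(f)(e, \nu) = f(\nu_\nu) = f(\nu)$. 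One then checks $\Db_L$ is an algebra map (clear, since $\Ic_2$ is and evaluation is multiplicative) and is compatible with the coalgebra structure of $\Fc(\Nbo)$ by the same telescoping computation as in the proof of the previous lemma, now with the roles of the two actions interchanged; this is what makes $\Fc(\Nbo)$ a left $\Fc(\Gb^\dagger_0)$-comodule coalgebra and hence the full comodule-Hopf-algebra structure.

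Finally, for the explicit formula \eqref{left-coaction-gamma}, I would differentiate the defining relation on generators. Since $\a^i_{j_1,\ldots,j_k}(\nu) = \b^i_{j_1,\ldots,j_k}(\nu) = \p_{j_k}\cdots\p_{j_1}\nu^i(x)\at{x}$ and $(\lambda \lt \nu)$-type expressions scale the argument and the output by $\abo^{\pm 1}$, the chain rule applied to $\nu_{\lambda\circ\nu}(x) = \abo^{-1}\nu(\abo \cdot x)$ produces a factor $\a^i_s$ from the outer $\abo^{-1}$ hitting the $i$-index and, for a first-order Taylor coefficient, a single factor from the $\abo$ inside; the identity $\check\a^i_s \hat\a^s_{j_1,\ldots,j_k} = \b^i_{j_1,\ldots,j_k}$ already recorded above is exactly the bookkeeping that collapses this to $\a^i_s \ot \a^s_{j_1,\ldots,j_k}$. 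I expect the main obstacle to be purely notational: keeping straight which of the two mutually-acting groups sits on which tensor factor, and confirming that the normality of $\Nbo$ (so that $\nu \rt \lambda = \lambda$, already noted) is what forces the left coaction to be "honest" rather than twisted — once that is pinned down, every step is a routine transcription of the argument for the right coaction $\Db$.
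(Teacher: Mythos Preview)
There is a genuine error in your identification of $\Ic_2$. The inclusion $\Ic_2:\Fc(\Nbo)\ra\Fc(\Gb^\dagger)$ is the algebra map sending $\a^i_{j_1,\ldots,j_k}\mapsto\b^i_{j_1,\ldots,j_k}$; it is \emph{not} the map $\psi\mapsto f(\nu_\psi)$ (that map is denoted $\hat f$ in the paper and is a different embedding). Because of this misidentification, the object you propose to differentiate, $\nu_{\lambda\circ\nu}(x)=\abo^{-1}\nu(\abo\cdot x)$, is both the wrong target and also incorrect as written: since $(\lambda\circ\nu)'(0)=\abo$ and $(\lambda\circ\nu)(x)=\abo\cdot\nu(x)$, one actually has $\nu_{\lambda\circ\nu}=\nu$. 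Your chain-rule sketch would then yield an $\abo^{-1}$ on the upper index together with $k$ inner factors of $\abo$ on the lower indices, which is not the formula \eqref{left-coaction-gamma}.

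The paper's proof is a single line and bypasses all of your axiom checks. One computes $\Db_L(\a^i_{j_1,\ldots,j_k})(\lambda,\nu)=\b^i_{j_1,\ldots,j_k}(\lambda\circ\nu)$ directly: because $\lambda(x)=\abo\cdot x$ is \emph{linear}, $(\lambda\circ\nu)^i(x)=a^i_s\,\nu^s(x)$, and the $k$-fold derivative at $0$ is simply $a^i_s\,\p_{j_k}\cdots\p_{j_1}\nu^s(0)=\a^i_s(\lambda)\,\a^s_{j_1,\ldots,j_k}(\nu)$. No inner factor of $\abo$ appears, since the argument of $\nu$ is $x$ itself. Once this explicit formula on generators is established, the coaction axioms are automatic from $\D(\a^i_j)=\a^i_k\ot\a^k_j$ and $\ve(\a^i_j)=\d^i_j$, so your separate verifications of coassociativity, counitality, and comodule-coalgebra compatibility are not needed.
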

\begin{proof}
It suffices to prove that
\eqref{left-coaction-gamma} holds, and this is straightforward:
\begin{equation*}
\Db_L(\a^i_{j_1,\ldots,j_k})(\lambda,\nu)=\b^i_{j_1,\ldots,j_k}(\lambda\circ\nu)=\b^i_s(\lambda)\b^s_{j_1,\ldots,j_k}(\nu)=\a^i_s(\lambda)\a^s_{j_1,\ldots,j_k}(\nu).
\end{equation*}
\end{proof}

We now define a natural map $\Phi:\Fc(\Gb^\dagger)\ra \Fc(\Gb^\dagger_0)\ot \Fc(\Nbo)$ by
\begin{align}
 \Phi(f)(\lambda,\nu)=f(\lambda\circ\nu)=\pi_1(f\ps{1})\ot \pi_2(f\ps{2})(\lambda,\nu).
\end{align}

\begin{lemma}
The map $\Phi:\Fc(\Gb^\dagger)\ra \Fc(\Gb^\dagger_0)\ot \Fc(\Nbo)$ is an isomorphism of  algebras.
\end{lemma}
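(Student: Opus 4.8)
The plan is to exhibit a two-sided inverse to $\Phi$ built from the section maps $\Ic_1,\Ic_2$ already in hand, and to verify both compositions are the identity using the group-theoretic fact that every $\psi\in\Gb^\dagger$ factors uniquely as $\psi=\lambda_\psi\circ\nu_\psi$ with $\lambda_\psi\in\Gb^\dagger_0$ and $\nu_\psi\in\Nbo$. First I would note that $\Phi$, being given on functions by $\Phi(f)(\lambda,\nu)=f(\lambda\circ\nu)$, is visibly an algebra homomorphism since $(\lambda,\nu)\mapsto\lambda\circ\nu$ is a map of sets (no group structure needed for multiplicativity, only for the later coalgebra statements). So the only content is bijectivity. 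I would define $\Psi:\Fc(\Gb^\dagger_0)\ot\Fc(\Nbo)\ra\Fc(\Gb^\dagger)$ on the tensor factors by $\Psi(g\ot h)=\Ic_1(g)\cdot\Ic_2(h)$, i.e. the product in $\Fc(\Gb^\dagger)$ of the two lifted functions, extended multiplicatively; equivalently $\Psi(g\ot h)(\psi)=g(\lambda_\psi)\,h(\nu_\psi)$.

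The key steps are then: (1) check $\Psi\circ\Phi=\Id_{\Fc(\Gb^\dagger)}$. For $f\in\Fc(\Gb^\dagger)$ and $\psi\in\Gb^\dagger$, $(\Psi\circ\Phi)(f)(\psi)=\Phi(f)(\lambda_\psi,\nu_\psi)=f(\lambda_\psi\circ\nu_\psi)=f(\psi)$ by the uniqueness of the factorization \eqref{decopmpose-G1+}--\eqref{decopmpose-G2+}; here one uses that the coproduct-expansion $\Phi(f)=\pi_1(f\ps{1})\ot\pi_2(f\ps{2})$ means $\Psi(\Phi(f))(\psi)=\pi_1(f\ps{1})(\lambda_\psi)\,\pi_2(f\ps{2})(\nu_\psi)=f\ps{1}(\lambda_\psi)f\ps{2}(\nu_\psi)=f(\lambda_\psi\circ\nu_\psi)$, invoking $\pi_i=$ restriction and the defining property $\D(f)(\psi_1,\psi_2)=f(\psi_1\circ\psi_2)$. (2) Check $\Phi\circ\Psi=\Id$ on generators $\a^i_j,\a^{-1}$ of $\Fc(\Gb^\dagger_0)$ and $\a^i_{j_1,\ldots,j_k}$ of $\Fc(\Nbo)$ (sitting inside the tensor product as $\a^i_j\ot 1$, $1\ot\a^i_{j_1,\ldots,j_k}$, etc.), using $\pi_1\circ\Ic_1=\Id$, $\pi_2\circ\Ic_2=\Id$, $\pi_1\circ\Ic_2=\ve$, $\pi_2\circ\Ic_1=\ve$, together with the normalization $\pi_1(\b^i_{j_1,\ldots,j_k})=0$ and $\pi_2(\b^i_j)=\d^i_j$; concretely $(\Phi\circ\Psi)(\a^i_j\ot 1)(\lambda,\nu)=\Ic_1(\a^i_j)(\lambda\circ\nu)=\b^i_j(\lambda\circ\nu)=\b^i_s(\lambda)\b^s_j(\nu)=\a^i_s(\lambda)\d^s_j=\a^i_j(\lambda)$, and similarly for the $\Nbo$-generators using $\check\a^i_s\hat\a^s_{j_1,\ldots,j_k}=\b^i_{j_1,\ldots,j_k}$ recorded just above. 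Since both $\Phi$ and $\Psi$ are algebra maps and agree (after composition) with the identity on a generating set, the two compositions are the identity everywhere.

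The main obstacle — really the only subtlety — is bookkeeping the interplay between the abstract Hopf-algebraic description (via $\pi_1(f\ps{1})\ot\pi_2(f\ps{2})$) and the concrete functional description (evaluation at $\lambda_\psi,\nu_\psi$), and making sure $\Psi$ is well-defined as a map out of the plain tensor product $\Fc(\Gb^\dagger_0)\ot\Fc(\Nbo)$; this is fine because $\Ic_1,\Ic_2$ are algebra maps into the commutative algebra $\Fc(\Gb^\dagger)$, so $(g,h)\mapsto\Ic_1(g)\Ic_2(h)$ is bilinear and multiplicative in each slot, hence descends to the tensor product. I would remark that $\Phi$ is asserted here only as an algebra isomorphism — it is genuinely \emph{not} a coalgebra map, which is precisely why the bicrossed-product coalgebra structure $\Fc(\Gb^\dagger_0)\cl\Fc(\Nbo)$ differs from the plain tensor product; the identification of $\Fc(\Gb^\dagger)$ with $\Fc(\Gb^\dagger_0)\acl\Fc(\Nbo)$ as Hopf algebras (as opposed to algebras) is the content of the matched-pair discussion preceding the lemma, and does not need to be reproven here.
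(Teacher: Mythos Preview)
Your overall strategy---construct an explicit inverse $\Psi$ from the unique factorization $\psi=\lambda_\psi\circ\nu_\psi$ and verify both compositions on generators---is exactly the paper's approach. However, there is a genuine error: your two descriptions of $\Psi$ do \emph{not} agree. The functional formula $\Psi(g\ot h)(\psi)=g(\lambda_\psi)h(\nu_\psi)$ is correct and is precisely the paper's inverse, but this is \emph{not} $\Ic_1(g)\cdot\Ic_2(h)$. The section $\Ic_2$ defined in the paper sends $\a^i_{j_1,\ldots,j_k}\mapsto\b^i_{j_1,\ldots,j_k}$, whereas the ``hat'' inclusion $h\mapsto(\psi\mapsto h(\nu_\psi))$ sends $\a^i_{j_1,\ldots,j_k}\mapsto S(\b^i_s)\b^s_{j_1,\ldots,j_k}$; these differ because $\nu_\psi=(\psi'(0))^{-1}\psi$ carries an extra linear twist. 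Concretely, with your $\Ic_1\cdot\Ic_2$ formula one gets
\[
(\Phi\circ\Psi)(1\ot\a^i_{jk})=\Phi(\b^i_{jk})=\a^i_s\ot\a^s_{jk}\neq 1\ot\a^i_{jk},
\]
so the verification of $\Phi\circ\Psi=\Id$ on the $\Nbo$-generators fails.

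The paper's inverse accounts for this twist explicitly via the left $\Fc(\Gb^\dagger_0)$-coaction $\Db_L$ on $\Fc(\Nbo)$:
\[
\Phi^{-1}(f^1\ot f^2)=\Ic_1(f^1)\,S\bigl(\Ic_1(f^2\ns{-1})\bigr)\,\Ic_2(f^2\ns{0}),
\]
which on $1\ot\a^i_{j_1,\ldots,j_k}$ yields $S(\b^i_s)\b^s_{j_1,\ldots,j_k}$, matching your functional description. Your argument becomes correct if you either drop the $\Ic_1\cdot\Ic_2$ formula and work solely with the functional description (then you must separately check that $\psi\mapsto h(\nu_\psi)$ lands in $\Fc(\Gb^\dagger)$, which follows from the identity $\hat\a^i_{j_1,\ldots,j_k}=S(\b^i_s)\b^s_{j_1,\ldots,j_k}$), or replace $\Ic_2$ by the antipode-twisted version above.
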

\begin{proof}
 $\Phi$ is obviously linear, and 
 \begin{equation}
\Phi^{-1}(f^1\ot f^2)= \Ic_1(f^1)S(\Ic_1(f^2\ns{-1}))\Ic_2(f^2\ns{0}).
\end{equation}
defines a two sided  inverse for $\Phi$.
Indeed,
\begin{align}\label{PHI}
\begin{split}
&\Phi(\b^i_j)= \a^i_j\ot 1, \quad \Phi(\b^{-1}) =\a^{-1}\ot 1,\quad \Phi(\b^i_{j_1,\ldots,j_k})= \a^i_s\ot\a^s_{j_1,\ldots,j_k},\\
&\Phi^{-1}(\a^i_j\ot 1)=\b^i_j,\quad \Phi^{-1}(\a^{-1}\ot 1)= \b^{-1},\quad \Ph^{-1}(1\ot \b^i_{j_1,\ldots j_k})= S(\b^i_s)\b^s_{j_1,\ldots,j_k}.
\end{split}
\end{align}
Since both maps $\Phi$ and $\Phi^{-1}$ are algebra maps, the claim follows.
\end{proof}

\begin{proposition}\label{proposition-decompos-F-1-F-2}
The map $\Phi:\Fc(\Gb^\dagger)\ra \Fc(\Gb^\dagger_0)\ot \Fc(\Nbo)$ is an isomorphism of 
Hopf algebras. 
\end{proposition}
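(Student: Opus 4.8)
The plan is to upgrade the algebra isomorphism $\Phi$ of the preceding lemma to a Hopf algebra isomorphism by checking that $\Phi$ intertwines the coproducts, counits, and antipodes. Since we already know $\Phi$ is a bijective algebra map, and since a bialgebra map between Hopf algebras automatically commutes with the antipodes, it suffices to verify that $\Phi$ is a coalgebra map, i.e. $(\Phi \ot \Phi)\circ \D_{\Fc(\Gb^\dagger)} = \D_{\Fc(\Gb^\dagger_0)\acl\Fc(\Nbo)}\circ \Phi$ and $\ve \circ \Phi = \ve$. Because both sides are algebra maps out of $\Fc(\Gb^\dagger)$, it is enough to check this equality on the algebra generators $\b^i_j$, $\b^{-1}$, and $\b^i_{j_1,\ldots,j_k}$, for which explicit formulas are recorded in \eqref{PHI}.

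First I would recall that the coproduct on the bicrossed product $\Fc(\Gb^\dagger_0)\acl\Fc(\Nbo)$ is the one coming from the matched pair structure, which, using the left coaction $\Db_L$ of \eqref{left-coaction-gamma} together with the trivial $\Fc(\Nbo)$-action on $\Fc(\Gb^\dagger_0)$, takes the form
\begin{equation*}
\D(a \acl f) = (a\ps{1} \acl f\ps{1}\ns{-1}) \ot (a\ps{2}\, f\ps{1}\ns{0} \acl f\ps{2}) , \qquad a \in \Fc(\Gb^\dagger_0), \; f \in \Fc(\Nbo).
\end{equation*}
On the generators $\b^i_j$ (and similarly $\b^{-1}$), the element $\Phi(\b^i_j) = \a^i_j \ot 1$ is group-like-ish in the sense that $\D(\a^i_j) = \a^i_k \ot \a^k_j$, and one checks directly that $(\Phi\ot\Phi)(\D(\b^i_j)) = (\Phi\ot\Phi)(\b^i_k \ot \b^k_j) = (\a^i_k\acl 1)\ot(\a^k_j \acl 1)$ matches $\D(\a^i_j \acl 1)$, since the coaction $\Db_L$ on $1 \in \Fc(\Nbo)$ is trivial. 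For the generators $\b^i_{j_1,\ldots,j_k}$ I would use the coproduct formulas \eqref{D-s-i-j-k} and its higher analogues transported to $\Fc(\Gb^\dagger)$ via $\i$, together with $\Phi(\b^i_{j_1,\ldots,j_k}) = \a^i_s \ot \a^s_{j_1,\ldots,j_k}$ and the identity $\Db_L(\a^s_{j_1,\ldots,j_k}) = \a^s_t \ot \a^t_{j_1,\ldots,j_k}$; the point is that the "mixing" term $\a^\ell_k \a^m_j \ot \a^i_{m,\ell}$ in $\D(\s^i_{j,k})$ is precisely reproduced by the left coaction term in the bicrossed coproduct. The counit identity $\ve\circ\Phi = \ve$ is immediate on generators from \eqref{PHI} since $\ve(\a^i_j) = \d^i_j$, $\ve(\a^{-1})=1$, $\ve(\a^i_{j_1,\ldots,j_k})=0$.

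Alternatively — and this is the cleaner route I would actually write up — one sidesteps generator bookkeeping entirely by a diagrammatic argument at the group level. The map $\Phi$ is dual to the multiplication map $m_{\Gb^\dagger}: \Gb^\dagger_0 \times \Nbo \to \Gb^\dagger$, $(\lambda,\nu)\mapsto \lambda\circ\nu$, which is a bijection by the unique decomposition \eqref{decopmpose-G1+}--\eqref{decopmpose-G2+}; and the coproduct on each of $\Fc(\Gb^\dagger)$, $\Fc(\Gb^\dagger_0)$, $\Fc(\Nbo)$ is dual to the respective group multiplication, while the bicrossed coproduct on $\Fc(\Gb^\dagger_0)\acl\Fc(\Nbo)$ is by construction dual to the group multiplication on the matched-pair double $\Gb^\dagger_0 \bowtie \Nbo$. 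The compatibility of $\Phi$ with coproducts then reduces to the fact that $m_{\Gb^\dagger}$ is a group isomorphism from $\Gb^\dagger_0\bowtie\Nbo$ onto $\Gb^\dagger = \Gb^\dagger_0 \cdot \Nbo$, which is exactly the content of \eqref{decomposition-G-1-G-2} and the normality of $\Nbo$ in $\Gb^\dagger$. Functions being represented by the Taylor coefficients, one need only observe that these identities of (pro-)algebraic groups dualize to the stated Hopf-algebra identities.

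The main obstacle is a bookkeeping one rather than a conceptual one: correctly matching the bicrossed-product coproduct formula — in particular the position of the left coaction $\Db_L$ and the triviality of the $\Fc(\Nbo)$-action — against the recursively defined coproducts $\D(\s^i_{j_1,\ldots,j_k}) = [\D(X_{j_k}),\D(\s^i_{j_1,\ldots,j_{k-1}})]$ transported through $\i$ and $\Phi$. I expect that once the degree-two case $\b^i_{j,k}$ is checked explicitly (using \eqref{D-s-i-j-k}), the general case follows by an induction on $k$ that simply propagates the compatibility through the commutator, with no new phenomena. The antipode compatibility then requires no separate argument, being automatic for bialgebra maps between Hopf algebras.
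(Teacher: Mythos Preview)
Your proposal is correct, and your ``cleaner route'' is essentially the paper's argument: the paper takes a generic $f=f^1f^2$ with $f^1\in\Fc(\Gb^\dagger_0)$, $f^2\in\Fc(\Nbo)$, evaluates both $\D_{\Fc(\Gb^\dagger_0)\cl\Fc(\Nbo)}(\Phi(f))$ and $(\Phi\ot\Phi)\D_{\Fc(\Gb^\dagger)}(f)$ at group points $(\lambda_1,\nu_1;\lambda_2,\nu_2)$, and collapses each to $f^1(\lambda_1\circ\lambda_2)\,f^2((\nu_1\lt\lambda_2)\circ\nu_2)$ using the identity $\nu\rt\lambda=\lambda$ (normality of $\Nbo$) from \eqref{decomposition-G-1-G-2}; the counit check is immediate. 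Your generator-by-generator alternative would also work but is exactly the bookkeeping the paper (and you, wisely) avoid.

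One small caveat: the crossed coproduct on $\Fc(\Gb^\dagger_0)\cl\Fc(\Nbo)$ in the paper is built from the \emph{right} coaction $\Db$ of \eqref{coaction-F-1-F-2}, namely $\Db(f)(\nu,\lambda)=f(\nu\lt\lambda)$, not from the left coaction $\Db_L$ of \eqref{left-coaction-gamma}; your displayed bicrossed formula is written with left-coaction Sweedler indices, so be sure to align conventions before writing it up.
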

\begin{proof}
Both sides being commutative, it suffice to check the compatibility of $\Phi$ with the coalgebra structures. 

Let $f\in \Fc(\Gb^\dagger)$ be of the form
$f=f^1f^2$, with $f_1\in \Fc(\Gb^\dagger_0)$, $f_2\in \Fc(\Nbo)$, 
which means that $\Phi(f)=f^1\cl f^2$.
The  comultiplication of $\Fc(\Gb^\dagger_0)\cl\Fc(\Nbo)$ is given by 
\begin{align}
\begin{split}
&\D_{\Fc_1(\Gb^\dagger)\cl\Fc_2(\Gb^\dagger)}(\Phi(f))(\lambda_1,\nu_1;\lambda_2,\nu_2)\\
&=\big(f^1\ps{1}\cl f^2\ps{1}\ns{0}\ot f^1\ps{2}f^2\ps{1}\ns{0}\cl f^2\ps{2}\big)(\lambda_1,\nu_1;\lambda_2,\nu_2)\\
&=f^1\ps{1}(\lambda_1)f^2\ps{1}\ns{0}(\nu_1)f^1\ps{2}(\lambda_2)f^2\ps{1}\ns{0}(\lambda_2)f^2\ps{2}(\nu_2)=\\
&=(f^1\ps{1}(\lambda_1)f^1\ps{2}(\lambda_2))\; (f^2\ps{1}\ns{0}(\lambda_2)f^2\ps{1}\ns{0}(\nu_1)) \;f^2\ps{2}(\nu_2)\\
&=f^1(\lambda_1\circ\lambda_2) f^2\ps{1}(\nu_1\lt \lambda_2)f^2\ps{2}(\nu_2)\\
&=f^1(\lambda_1\circ\lambda_2) f^2((\nu_1\lt \lambda_2)\circ\nu_2).
\end{split}
\end{align}
On the other hand one uses \eqref{decomposition-G-1-G-2} and the fact that $\nu\rt\lambda=\lambda$ for any $\lambda\in \Gb^\dagger_0$ and any $\nu\in G_2^+$ to see that
\begin{align}
\begin{split}
&\Phi\ot\Phi (\D(f))(\lambda_1,\nu_1;\lambda_2,\nu_2)\\
 &=\Phi(f\ps{1})(\lambda_1,\lambda_2) \Phi(f\ps{2})(\lambda_2, \nu_2))\\
 &=f\ps{1}(\lambda_1\circ\nu_1)f\ps{2}(\lambda_2\circ\nu_2)\\
 &=f(\lambda_1\circ\nu_1\circ\lambda_2\circ\nu_2)\\
 &=f(\lambda_1\circ\lambda_2\circ (\nu_2\lt \lambda_1)\circ \nu_2)\\
&=f^1(\lambda_1\circ\lambda_2)f^2(\nu_2\lt \lambda_1)\circ \nu_2).
\end{split}
\end{align}
Finally it is easy to see that
\begin{equation}
\ve_{\Fc_1(\Gb^\dagger)\cl\Fc_2(\Gb^\dagger)}(\Phi(f))= f^1(e)f^2(e)=f(e)=\ve_{\Fc(\Gb^\dagger)}(f).
\end{equation}
\end{proof}
Via the above isomorphism  we identify
\begin{equation}
\b^i_j =\a^i_j\cl 1,\quad \b^{-1} =\a^{-1}\cl 1, \quad \b^i_{j_1,\ldots j_k}=\a^i_{s}\cl \a^s_{j_1,\ldots,j_k}
\end{equation}

For $\a^i_{j_1, \ldots,j_k}$ we have
\begin{equation}
\Db(\a^i_{j_1, \ldots,j_k})= \a^r_{s_1,\ldots,s_k}\ot S(\a^i_r)\a^{s_1}_{j_1}\cdots \a^{s_k}_{j_k}.
\end{equation}

Let $\Fg\Fl_n$ be the Lie algebra $\GL_n(\Rb)$. One defines 
a Hopf pairing between $\Fc(\Gb^\dagger_0)$ and $\Uc(\Fg\Fl_n)$
by extending the natural pairing
 \begin{equation}
 \langle f, Y\rangle=Y(f)=\dt f( \exp(tY)), \qquad Y\in \Fg\Fl_n,\lambda\in \Gb^\dagger_0 .
 \end{equation}
This means that
 \begin{align}\label{pairing-properties}
\begin{split}
 &\langle f^1f^2, u\rangle=\langle f^1, u\ps{1}\rangle \langle f^2, u\ps{2}\rangle, \quad \langle f, u^1u^2\rangle=\langle f\ps{1}, u^1\rangle \langle f\ps{2}, u^2\rangle,\\
&\langle f, 1\rangle=\ve(f),\qquad \langle 1, u\rangle=\ve(u),\qquad\langle f, S(u)\rangle=\langle S(f), u\rangle.
 \end{split}
\end{align}

 We now define the action $\Fg\Fl_n\ot \Fc(\Nbo)\ra \Fc(\Nbo)$ by
\begin{align}\label{action-Y-F2}
Y\rt f= f\ns{0}Y(f\ns{1}) .
\end{align}
We denote the standard basis of $\Fg\Fl_n$
 by $Y^i_j$, $1\le i,j\le n$.  One  first observes that $Y^i_j(\a^p_q)= \d^i_q\d^p_j$. 
 Then we use  the Hopf pairing properties \eqref{pairing-properties} to see that
 \begin{align}\label{action-gl-F}
 \begin{split}
 &Y^i_j\rt \a^p_{q_1,\ldots, q_m}=\sum_s \d^i_{q_s}\a^p_{q_1,\ldots, j,\ldots q_m}-\d^p_j \a^i_{q_1, \ldots, q_m},
 \end{split}
 \end{align}

By restricting the action of $\Gb^\dagger$ on $\Tbo$  to $\Gb^\dagger_0$ we get the coaction
\begin{align}\label{coaction-F-1-V-new}
\begin{split}
&{V}\ra {V}\ot \Fc(\Gb^\dagger_0),\\
&X_k\ra X_s\ot \a^s_k.
\end{split}
\end{align}
We use this coaction to define an action of $\Fg\Fl_n$ on ${V}$ via
 \begin{equation}\label{action-gl-V}
  Y\rt X=\langle X\ns{0}\,,\, Y\rangle X\ns{1}.
 \end{equation}
 Note that the action of $Y^i_j$ on $X_k$ is indeed the natural action of $\Fg\Fl_n$ on $\Rb^n$, \ie
 \begin{equation}\label{action-Y-V}
 Y^i_j\rt X_k\; = \;\d^i_k\; X_j.
 \end{equation}


\section{Hopf cyclic cohomology of $\Kc_n$}\label{S3}
For the reader's convenience we recall two basic notions.

Given a Hopf algebra $\Hc$, a character $\d: \Hc \ra \Cb$ and 
a group-like element $\s\in \Hc$, the pair $(\d,\s)$ is called a
{\em modular pair in involution} if
 \begin{equation}\label{def-MPI}
 \d(\s)=1, \quad \text{and}\quad  S_\d^2=Ad_\s,
 \end{equation}
where  $Ad_\s(h)= \s h\s^{-1}$ and  $S_\d(h)=\d(h\ps{1})S(h\ps{2})$, $h \in \Hc$.  
To such a datum was associated in~ \cite{cm4} a cyclic module whose cohomology,
called Hopf cyclic, is denoted $HC^\bullet (\Hc ;\, ^\s\Cb_\d)$.

Let now $\Fg$ be a finite-dimensional Lie algebra and let $\Fc$ be a $\Fg$-Hopf algebra
(\cf~\cite{RS}), on which $\Fg$ coacts via $\Db_\Fg:\Fg\ra \Fg\ot \Fc$. 
The modular character of $\d:\Fg\ra \Cb$, 
\begin{equation*}
 \d(X)= \Trace (\ad_X)  , \qquad X \in \Fg ,
\end{equation*}
 extends to a character of $\Uc(\Fg)$. One then further extends $\d$ to a
 character of $\Fc\acl \Uc(\Fg)$ by
\begin{equation*}
\d(f\acl u)=\ve(f)\d(u).
\end{equation*}
 In a dual fashion, one defines a group-like  element in $\Fc$ as follows.
The (first-order) matrix coefficients $ f^i_j\in \Fc$ of the coaction $\Db_\Fg$ are
given by the equation
\begin{equation*}
\Db_\Fg(X_j)= \sum_{i=1}^n X_i\ot f_j^i , \qquad n=\dim\Fg ;
\end{equation*}
they satisfy the relation
 \begin{equation*} 
\D(f_i^j)=\sum_{k=1}^n f_k^j\ot f_i^k .
\end{equation*}

\begin{equation}
\s_F:= \det (f^i_j)=\sum_{\pi\in S_n}(-1)^\pi f^1_{\pi(1)}\cdots f^n_{\pi(n)}.
\end{equation}
 One then defines a group-like element in $F\acl \Uc(\Fg)$ by setting
\begin{equation*}
\s:= \s_F\acl 1 , \quad \text{where} \quad 
\s_F:=\sum_{\pi\in S_n}(-1)^\pi f^1_{\pi(1)}\cdots f^n_{\pi(n)}.
\end{equation*}
It is shown in \cite[Theorem 3.2]{RS} that the $(\d, \s)$ defines a modular pair in involution for the Hopf algebra $F\acl \Uc(\Fg)$.

\bigskip
We now return to the $ {V}$-Hopf algebra ${\Fc_\Kc}$ of \S1, equipped with
the action and  the coaction defined in \eqref{action-V-F(G)} and \eqref{coaction-V}.
Since the Lie algebra $ {V}$ is commutative, $\d$ coincides  with $\ve$ and hence $(\ve, \s)$
is a modular pair in involution for ${\Fc_\Kc}\acl \Uc(V)$. Denoting by
 $^\s\Cb$ the one-dimensional left comodule and right module  over ${\Fc_\Kc}\acl \Uc$
 determined by the group-like $\s$ and the character $\ve$ respectively, we are thus in a position
 to form the Hopf cyclic cohomology $HC^{\bullet}({\Fc_\Kc}\acl \Uc, ^\s\Cb)$ of the
 canonically associated $(b, B)$-bicomplex (\cf~\cite{cm2, cm3, cm4}).

In order to compute this cohomology, we employ a quasi-isomorphic 
bicomplex, which takes advantage of the bicrossed product structure of the Hopf
algebra $\Kc_n$ as well as of the particular nature of its components. 
Referring the reader to \cite{mr09, mr11,kr2} for details concerning 
the intermediate steps, we proceed to describe the resulting quasi-isomorphic bicomplex.

The Lie  algebra $V$ admits the following right
 action on ${\Fc_\Kc}^{\ot q}$ 
 \begin{align} \label{bullet}
&X\bullet (f^1 \ot \cdots \ot f^q) =  \\ \notag
&X\ps{1}\ns{0}\rt f^1\ot X\ps{1}\ns{1}(X\ps{2}\ns{0}\rt f^2)\odots X\ps{1}\ns{q-1}\dots X\ps{q-1}\ns{1}(X\ps{q}\rt f^q),
\end{align}
On the other hand, since ${\Fc_\Kc}$ is commutative, the coaction $\Db: {V}\ra  {V}\ot {\Fc_\Kc}$,
 extends  from $ {V}$ to
 a unique coaction $\Db_{ {V}}:\wg^p {V}\ra \wg^p {V}\ot {\Fc_\Kc}$. After tensoring it with the right coaction of $^\s{\Cb}$ we obtain the coaction
\begin{align}
\begin{split} \label{wgcoact}
&\Db_{^\s{\Cb}\ot\wdg {V}}(\one\ot X^1\wdots X^q)\,=\\
&\,\one\ot X^1\ns{0}\wdots X^q\ns{0}\ot
\s^{-1}X^1\ns{1}\dots X^q\ns{1} .
\end{split}
\end{align}

Let $\{X_i \}_{1\leq i\leq n}$ be  the basis for $ {V}$ and let $\{\t^j\}_{1\leq j \leq n}$ denote
the dual basis of $ {V}^\ast$.   One defines the dual
 left coaction on $\Db_ {V}^\ast: {V}^\ast\ra  {V}^\ast\ot {\Fc_\Kc}$ by
\begin{equation}\label{coaction-g*-F}
\Db_ {V}^\ast(\t^i)= \sum_j \b^i_j\ot \t^j , \quad \text{where} \quad \Db_ {V}(X_i)=X_j\ot \b^j_i .
\end{equation} 
We extend this coaction on $\wdg ^\bullet  {V}^\ast$ diagonally and observe that the  result is a left coaction just because ${\Fc_\Kc}$ is commutative. For later use we  record below that if
$\om:= \t^{i_1}\wdots \t^{i_k}$ then
\begin{align}\label{coaction-g-ast}
\begin{split}
\om\ns{-1}\ot \om\ns{0}= \sum_{1\le l_j\le m} f_{l_1}^{i_1}\cdots f_{l_k}^{i_k}\ot \t^{l_1}\wdots \t^{l_k}.
\end{split}
\end{align}
One uses  the antipode of ${\Fc_\Kc}$ to turn it into a right coaction
 $\Db_{\wdg {V}^\ast}: \wdg^p {V}^\ast\ra \wdg^p {V}^\ast\ot {\Fc_\Kc}$, as follows:
 \begin{align}\label{coaction-F-1-W}
 \Db_{\wdg {V}^\ast}( \om)=  \om\ns{1}\ot S(\om\ns{-1}).
\end{align}

We now use the above ingredients to build the following bicomplex:
 \begin{align}\label{bicomp-V*}
 \begin{split}
 \xymatrix{  \vdots & \vdots
 &\vdots &&\\
  \wdg^2 {V}^\ast  \ar[u]^{\p_{ {V}^\ast}}\ar[r]^{b^\ast_{\Fc_\Kc}~~~~~~~}&   \wdg^2 {V}^\ast\ot{\Fc_\Kc} \ar[u]^{\p_{ {V}^\ast}} \ar[r]^{b^\ast_{\Fc_\Kc}}&  \wdg^2 {V}^\ast\ot{\Fc_\Kc}^{\ot 2} \ar[u]^{\p_{ {V}^\ast}} \ar[r]^{~~~~~~~~~b^\ast_{\Fc_\Kc}} & \hdots&  \\
   {V}^\ast  \ar[u]^{\p_{ {V}^\ast}}\ar[r]^{b^\ast_{\Fc_\Kc}~~~~~}&   {V}^\ast\ot{\Fc_\Kc} \ar[u]^{\p_{ {V}^\ast}} \ar[r]^{b^\ast_{\Fc_\Kc}}&    {V}^\ast\ot {\Fc_\Kc}^{\ot 2} \ar[u]^{\p_{ {V}^\ast}} \ar[r]^{~~~~~b^\ast_{\Fc_\Kc} }& \hdots&  \\
   \Cb\ar[u]^{\p_{ {V}^\ast}}\ar[r]^{b^\ast_{\Fc_\Kc}~~~~~~~}&  {\Fc_\Kc} \ar[u]^{\p_{ {V}^\ast}}\ar[r]^{b^\ast_{\Fc_\Kc}}&  {\Fc_\Kc}^{\ot 2} \ar[u]^{\p_{ {V}^\ast}} \ar[r]^{~~~~~b^\ast_{\Fc_\Kc}} & \hdots&. }
\end{split}
\end{align}
 The vertical coboundary $\p_{ {V}^\ast}:C^{p,q}\ra C^{p,q+1}$ is the Lie algebra
cohomology coboundary of the Lie algebra $ {V}$ with coefficients in ${\Fc_\Kc}^{\ot p}$,
 where the action of $ {V}$ is given by
\begin{multline}
( \one\ot f^1\odots f^p) \blacktriangleleft X=-\; \one\ot X\bullet(f^1\odots f^p).
\end{multline}
The horizontal $b$-coboundary $b^*_{\Fc_\Kc}$ has the expression
\begin{align}
\begin{split}
&b^\ast_{\Fc_\Kc}(\a\ot f^1\odots f^q)=\\
&\a\ot 1\ot f^1\odots f^q +\sum_{i=1}^q(-1)^i \a\ot f^1\odots \D(f^i)\odots f^q+\\
&(-1)^{q+1}\ot \a\ns{1}\ot f^1\odots f^q\ot \a\ns{-1}.
\end{split}
\end{align}
\medskip

At this stage we recall that by Proposition \eqref{proposition-decompos-F-1-F-2}
the Hopf subalgebra has a further factorization,
\begin{equation}
\Fc_\Kc\cong \Fc(\Gb^\dagger_0)\cl \Fc(\Nbo).
\end{equation}
This allows to apply the same treatment alluded to above to each row of
the bicomplex  \eqref{bicomp-V*}. 

Let us describe the bicomplex which
computes the cohomology of the $p$th row. 
To simplify the notation, in what follows 
we  abbreviate $\Fc_1:=\Fc(\Gb^\dagger_0)$ and $\Fc_2:=\Fc(\Nbo)$.

Diagrammatically,
\begin{align}\label{bicomp-F-1-F-2}
\begin{xy} \xymatrix{  \vdots & \vdots
 &\vdots &&\\
  \wdg^p {V}^\ast\ot \Fc_2^{\ot 2}  \ar[u]^{b^\ast_{\Fc_1}}\ar[r]^{b^\ast_{\Fc_1}~~~~~~~}&   \wdg^p {V}^\ast \ot \Fc_2^{\ot 2}\ot \Fc_1\ar[u]^{b^\ast_{\Fc_2}} \ar[r]^{b^\ast_{\Fc_1}}&  \wdg^p {V}^\ast \ot \Fc_2^{\ot 2}\ot \Fc_1^{\ot 2}\ar[u]^{b^\ast_{\Fc_2}} \ar[r]^{~~~~~~~~~b^\ast_{\Fc_1}} & \hdots&  \\
     \wdg^pV^\ast\ot \Fc_2 \ar[u]^{b^\ast_{\Fc_2}}\ar[r]^{b^\ast_{\Fc_1}~~~~~}&   \wdg^pV^\ast\ot \Fc_2 \ot \Fc_1\ar[u]^{b^\ast_{\Fc_2}} \ar[r]^{b^\ast_{\Fc_1}}&     \wdg^pV^\ast \ot \Fc_2 \ot \Fc_1^{\ot 2} \ar[u]^{b^\ast_{\Fc_2}} \ar[r]^{~~~~~b^\ast_{\Fc_1} }& \hdots&  \\
   \wdg^pV^\ast\ar[u]^{b^\ast_{\Fc_2}}\ar[r]^{b^\ast_{\Fc_1}~~~~~~~}& \wdg^pV^\ast \ot \Fc_1 \ar[u]^{b^\ast_{\Fc_2}}\ar[r]^{b^\ast_{\Fc_1}}&  \wdg^pV^\ast \ot \Fc_1^{\ot 2} \ar[u]^{b^\ast_{\Fc_2}} \ar[r]^{~~~~~b^\ast_{\Fc_1}} & \hdots&. }
\end{xy}
\end{align}
The $q$th row above is the Hochschild complex of
the coalgebra $\Fc_1$ with coefficients in the comodule $\wdg^pV^\ast\ot \Fc_2^{\ot q}$ defined by
\begin{align}
\begin{split}
\Db_{{V}^\ast\ot\Fc_2}:\wdg^qV^\ast\ot \Fc_2^{\ot \bullet}\ra \Fc_1\ot \wdg^qV^\ast\ot \Fc_2^{\ot \bullet},\\
\Db(\om\ot \td f)= \om\ns{-1}S(\td f\ns{1})\ot \om\ns{0}\ot \td f\ns{0} ,
\end{split}
\end{align}
 where we  use the natural left  coaction of $\Fc_1$ on $\wdg^pV^\ast$ defined in \eqref{coaction-g-ast}.
 
In the above bicomplex we also use the coaction of $\Fc_1$ on $\Fc_2$ defined in \eqref{coaction-F-1-F-2} and extend  it on $\td f=f^1\odots f^q\in \Fc_2^{\ot q}$ by
\begin{equation}
\td f\ns{0}\ot \td f\ns{1}= f^1\ns{0}\odots f^q\ns{0}\ot f^1\ns{1}\cdots f^q\ns{1},
\end{equation}
The columns of the bicomplex are just the Hochschild complexes of the coalgebra $\Fc_2$ with trivial coefficients $\wdg^p {V}^\ast\ot \Fc_1^{\ot \bullet}$.

\begin{proposition}\label{Hoch}
The cohomology of the $q$th row of the bicomplex  \eqref{bicomp-F-1-F-2} is concentrated 
in the first column and coincides  with $(\wdg^p {V}^\ast\ot \Fc_2^{\ot q})^{\Fg\Fl_n}$.
\end{proposition}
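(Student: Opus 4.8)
The plan is to recognize the $q$th row of \eqref{bicomp-F-1-F-2} as the one-sided cobar complex computing $\cotor^\bullet_{\Fc_1}(\Cb,N)$, where $N:=\wdg^p{V}^\ast\ot\Fc_2^{\ot q}$ is viewed as a left $\Fc_1$-comodule via the coaction $\Db_{{V}^\ast\ot\Fc_2}$ used to build that row, and then to exploit that $\GL_n$ is linearly reductive in characteristic zero. Recall that $\Fc_1=\Fc(\Gb^\dagger_0)$ is the coordinate Hopf algebra $\Pc(\GL_n)$; by the Peter--Weyl decomposition this is a direct sum of the matrix coalgebras $V_\lambda^\ast\ot V_\lambda$ over the irreducible rational $\GL_n$-modules $V_\lambda$, i.e. $\Fc_1$ is a cosemisimple coalgebra, so every $\Fc_1$-comodule is injective. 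Since $N$ is, by the fundamental theorem of comodules, the directed union of its finite-dimensional subcomodules, it is itself injective, whence $\cotor^j_{\Fc_1}(\Cb,N)=0$ for all $j\ge 1$ and $\cotor^0_{\Fc_1}(\Cb,N)=\Cb\,\square_{\Fc_1}\,N$, the space of $\Fc_1$-coinvariants. This already shows that the cohomology of the row is concentrated in the first column.

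It then remains to identify $\Cb\,\square_{\Fc_1}\,N$ with $(\wdg^p{V}^\ast\ot\Fc_2^{\ot q})^{\Fg\Fl_n}$. Via the Hopf pairing \eqref{pairing-properties} between $\Fc_1=\Pc(\GL_n)$ and $\Uc(\Fg\Fl_n)$, the left $\Fc_1$-comodule $N$ acquires a $\Uc(\Fg\Fl_n)$-module structure, which on the two tensor factors is given --- up to the antipode twist present in $\Db_{{V}^\ast\ot\Fc_2}$ --- by the actions \eqref{action-gl-V}, \eqref{action-Y-V} on $\wdg^p{V}^\ast$ and \eqref{action-Y-F2}, \eqref{action-gl-F} on $\Fc_2=\Fc(\Nbo)$, extended to the tensor product. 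Because $\Fc_1$ is a domain the algebraic group $\GL_n$ is connected, and for a connected algebraic group in characteristic zero the invariants of a rational module coincide with the vectors annihilated by its Lie algebra; applying this on each finite-dimensional subcomodule of $N$ and passing to the directed union gives $\Cb\,\square_{\Fc_1}\,N=N^{\Fg\Fl_n}$, as required.

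Two verifications remain, both routine and carried out exactly as in \cite{mr09, mr11}: that the row differential $b^\ast_{\Fc_1}$ coincides with the cobar differential of $\cotor_{\Fc_1}(\Cb,-)$ evaluated at $N$ (immediate from the explicit form of $b^\ast_{\Fc_1}$ and of $\Db_{{V}^\ast\ot\Fc_2}$), and that differentiating the $\Fc_1$-coaction recovers the $\Fg\Fl_n$-actions recorded in \S\ref{S2}. The one step needing genuine care --- and the place I expect the real (if classical) difficulty to sit --- is the cosemisimplicity input: one must ensure that $N$ genuinely decomposes into rational, finite-dimensional $\GL_n$-subcomodules so that linear reductivity applies, and that passing to the real form $\GL_n(\Rb)$ costs nothing; this is precisely the assertion that $\Pc(\GL_n)$ is a direct sum of matrix coalgebras over $\Rb$, equivalently Hochschild's vanishing theorem for the rational cohomology of a linearly reductive group in characteristic zero. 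Everything else is bookkeeping with the bicrossed product structures of \S\ref{S1}--\S\ref{S2}.
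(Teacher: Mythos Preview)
Your proposal is correct and takes essentially the same approach as the paper: identify the row as the Hochschild (equivalently, cobar/Cotor) complex of the coalgebra $\Fc_1=\Pc(\GL_n)$ with coefficients in $N=\wdg^p V^\ast\ot\Fc_2^{\ot q}$, invoke reductivity of $\GL_n$ to get vanishing in positive degrees, and then identify the degree-zero coinvariants with $\Fg\Fl_n$-invariants via the Hopf pairing. The only difference is packaging: the paper outsources the vanishing step to \cite[Theorem 4.8]{RS}, whereas you spell out the underlying mechanism (cosemisimplicity of $\Pc(\GL_n)$, hence injectivity of all comodules), which is exactly what that cited theorem encodes.
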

\begin{proof}
The Lie algebra $\Fg\Fl_n$, viewed as a subalgebra of formal vector fields on $\Rb^n$,
 acts naturally on both $V^\ast$ and on $\Fc_2$; it is this standard action which appears
in the above statement.  
 
Set \, $Z^{p,q}= \wdg^p {V}^\ast\ot \Fc_2^{\ot q}$.
The $q$th row is the Hochschild complex of the $\Fc_1$ with  coefficients in $Z^{p,q}$. We use the identification of $\Fc_1$ with $\Pc(\GL_n)$ and, since $\GL_n$ is reductive, we are in a position  
to apply \cite[Theorem 4.8]{RS} to
infer that the cohomology of the row is concentrated in the $0$th  cohomology group, in other words that
\begin{equation}
H^k(\Fc_1,Z^{p,q})\;=\; 0, \quad k>0, \quad H^0(\Fc_1\;,\;Z^{p,q})\;=\;({Z^{p,q}})^{\Fc_1} .
\end{equation}
Here $(Z^{p,q})^{\Fc_1}$ is the space of coinvariants elements with respect to the coaction  
$\Fc_1$ on $Z^{p,q}$ defined in \eqref{coaction-F-1-W}, i.e.,
\begin{equation}
(Z_{p,q})^{\Fc_1}=\{\om\ot \td f\;\mid\; \Db_{{V}^\ast\ot \Fc_2}(\om\ot \td f)=1\ot \om\ot \td f\}
\end{equation}

The action of  $\Fg\Fl_n$  on $\Fc_2$ is that defined in \eqref{action-gl-F}. 
The action of $\Fg\Fl_n$ on ${V}^\ast$ comes from the coaction of $\Fc_1$ on ${V}^\ast$ 
defined by \eqref{coaction-g*-F}. Explicitly,,
\begin{equation}
\t^k\lt Y^i_j\;=\; \langle \b^k_\ell\,,\, Y^i_j\rangle\; \t^\ell\; =\; \d^k_j\d^i_\ell\;\t^\ell\;=\; \d^k_j\;\t^i,
\end{equation}
which is transpose of the standard action of $\Fg\Fl_n$ on ${V}$ defined in \eqref{action-gl-V}.

Let us show that this space of coinvariants coincides with the invariants under 
$\Fg\Fl_n$. 
The right action of $\Fg\Fl_n$ on $Z^{p,q}$ is given by 
\begin{align}
 (\om\ot \td f)\lt Y= \langle (\om\ot \td f)\ns{-1}\,,\, Y\rangle \,  (\om\ot \td f)\ns{0},
\end{align}
and we need 
to check that  $\om\ot \td f\in( {Z^{p,q}})^{\Fc_1}$ if and only if $\om\ot \td f\in (Z^{p,q})^{\Fg\Fl_n}$,
or equivalently,
\begin{equation}
\om\ot \td f\in (Z^{p,q})^{\Fc_1}\, \iff \,\om\lt Y\ot \td f- \om\ot Y\rt \td f=0.
\end{equation}
\end{proof}

\begin{theorem}\label{theorem-chern}
The periodic Hopf cyclic cohomology $HP^{\bullet}(\Kc_n ;\, ^{\s^{-1}}\Cb)$ is isomorphic to the
truncated ring of Chern classes $P_{2n}[c_1, \ldots , c_n]$.
 \end{theorem}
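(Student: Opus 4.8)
The plan is to compute $HP^\bullet(\Kc_n;\,^{\s^{-1}}\Cb)$ by passing through the quasi-isomorphic bicomplex \eqref{bicomp-V*} and then, row by row, through the refined bicomplex \eqref{bicomp-F-1-F-2}, using the bicrossed product factorizations $\Kc_n^{\rm cop}\cong\Fc_\Kc\acl\Uc(V)$ and $\Fc_\Kc\cong\Fc(\Gb^\dagger_0)\cl\Fc(\Nbo)$. First I would invoke the standard reduction (as in \cite{mr09, mr11}) identifying the Hopf cyclic complex of the bicrossed product $\Fc_\Kc\acl\Uc(V)$ with the total complex of \eqref{bicomp-V*}, whose rows are Hochschild-type complexes of the coalgebra $\Fc_\Kc$ and whose columns compute Lie algebra cohomology of the commutative Lie algebra $V$ with the appropriate coefficients. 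Then, applying Proposition \ref{Hoch} to each row $p$, the $E_1$-term collapses onto the first column, yielding the complex $(\wdg^\bullet V^\ast\ot\Fc_2^{\ot\bullet})^{\Fg\Fl_n}$ with the two remaining differentials, namely the $\Fc_2$-Hochschild coboundary $b^\ast_{\Fc_2}$ and the Lie algebra coboundary $\p_{V^\ast}$ of $V$.

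The next step is to recognize the resulting double complex. By \cite[Theorem 4.10]{RS} — the van Est isomorphism for this context, which the introduction flags as the crucial ingredient — the Hochschild cohomology of $\Fc_2=\Fc(\Nbo)$ with the relevant coefficients is computed by the Lie algebra cohomology of the (pro-nilpotent) Lie algebra $\Fn$ of formal vector fields vanishing to order $\geq 2$ at the origin, relative to nothing, but everything carrying the residual $\Fg\Fl_n$-action and -coaction. Combining this with the Lie algebra cohomology of $V\cong\Rb^n$ along the columns, and taking $\Fg\Fl_n$-basic (i.e. $\GL_n$-invariant) elements, I expect the total complex to be identified with the relative Lie algebra cohomology complex $C^\bullet(\Fa_n,\Fg\Fl_n)$ of formal vector fields $\Fa_n=W_n$ on $\Rb^n$ relative to the linear subalgebra $\Fg\Fl_n$, with coefficients twisted by the module $^{\s^{-1}}\Cb$ corresponding to the modular character. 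Equivalently, and this is the cleanest route, one shows directly that the cohomology of \eqref{bicomp-V*} equals the relative Hopf cyclic cohomology $HP^\bullet(\Hc_n,\GL_n)$ of the pair: this is precisely the bicrossed-product manipulation of \S3 run in the form already established for $\Hc_n$ in \cite{mr09}, the only new feature being the replacement of the full frame-bundle prolongation by the action groupoid, which is absorbed by the extra factorization $\Fc_\Kc\cong\Fc(\Gb^\dagger_0)\cl\Fc(\Nbo)$.

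Finally, I would cite \cite{mr09} (as the excerpt does: ``therefore (\cf\ \cite{mr09}) to the truncated polynomial ring of Chern classes'') for the identification
\begin{equation*}
HP^\bullet(\Hc_n,\GL_n)\;\cong\;H^\bullet(W_n,\GL_n)\;\cong\;P_{2n}[c_1,\ldots,c_n],
\end{equation*}
the Gelfand--Fuks computation of the relative cohomology of formal vector fields, which is the truncated polynomial ring on the Chern classes $c_i$ of degree $2i$, truncated in total degree $>2n$. The twist by $^{\s^{-1}}\Cb$ matches the modular pair in involution $(\ve,\s)$ recorded before the theorem (note the exponent $\s^{-1}$ in the statement is the group-like acting on coefficients, consistent with $\s_F=\det(f^i_j)$ arising from the coaction \eqref{coaction-g*-F}), so no extra shift appears.

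\textbf{Main obstacle.} The hard part is not the final Gelfand--Fuks identification, which is quoted, but verifying that Proposition \ref{Hoch} genuinely applies row by row with all the comodule/module structures correctly matched — in particular that the coaction of $\Fc_1=\Pc(\GL_n)$ on $\wdg^p V^\ast\ot\Fc_2^{\ot q}$ entering \eqref{bicomp-F-1-F-2} is exactly the one for which \cite[Theorem 4.8]{RS} gives vanishing in positive degrees, and that the surviving differentials on the coinvariants assemble into the relative complex rather than something larger. Equivalently, the delicate point is checking that the coinvariants $(Z^{p,q})^{\Fc_1}$ coincide with the $\Fg\Fl_n$-invariants $(Z^{p,q})^{\Fg\Fl_n}$ — the equivalence displayed at the end of the proof of Proposition \ref{Hoch} — and that after this collapse the spectral sequence of the double complex degenerates at $E_2$, so that no higher differentials obstruct the identification with $P_{2n}[c_1,\ldots,c_n]$. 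Degeneration should follow, as in \cite{mr09, M-GC}, from the fact that the classes are represented by $\GL_n$-basic Chern--Weil forms which are visibly closed, but confirming this in the present $\Kc_n$ setting — i.e. that the van Est map of \cite[Theorem 4.10]{RS} is compatible with the bicomplex filtration — is where the real work lies.
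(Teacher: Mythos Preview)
Your proposal is essentially correct and tracks the paper's argument closely up through the collapse via Proposition~\ref{Hoch} to the $\Fg\Fl_n$-invariant bicomplex $(\wdg^\bullet V^\ast\ot\Fc_2^{\ot\bullet})^{\Fg\Fl_n}$. The one substantive divergence is what happens next. You propose invoking the van~Est isomorphism \cite[Theorem 4.10]{RS} to pass from the Hochschild cohomology of $\Fc_2=\Fc(\Nbo)$ to Lie algebra cohomology of the pro-nilpotent $\Fn$, then assembling this with $V$ into the Gelfand--Fuks relative complex $C^\bullet(W_n,\Fg\Fl_n)$ and quoting the classical computation. The paper instead observes directly that $\Fc_2=\Fc_\Hc$, the very Hopf subalgebra appearing in the bicrossed decomposition of $\Hc_n$ in \cite{mr09}, and that under the actions \eqref{action-Y-F2}, \eqref{coaction-F-1-V-new}, \eqref{action-Y-V} the collapsed bicomplex \eqref{bicomp-V*-rel} is \emph{literally} the bicomplex (3.42) of \cite{mr09} (equivalently (4.12) of \cite{mr11}) whose total cohomology was already computed there as $P_{2n}[c_1,\ldots,c_n]$ via \cite[Theorem~3.25]{mr09}. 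This is exactly the ``cleanest route'' you mention in passing, and it is the one the paper takes.

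This shortcut renders your ``main obstacle'' largely moot: because the identification is made at the level of the bicomplex itself rather than at $E_2$, there is no spectral-sequence degeneration to check, and the matching of module/comodule structures is handled by the explicit formulas \eqref{action-gl-F}, \eqref{action-Y-V} rather than by an abstract compatibility argument. Your longer route via van~Est and Gelfand--Fuks would also work --- indeed it is essentially what \cite[Theorem~3.25]{mr09} does internally --- but the paper avoids reproving that by recognizing the target bicomplex as already known.
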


\begin{proof}
By Proposition \ref{Hoch} the total cohomology of the bicomplex \eqref{bicomp-F-1-F-2} 
reduces to  the cohomology of the following complex
\begin{equation}
\xymatrix{(\wdg^p {V}^\ast)^{\Fg\Fl_n}\ar[rr]^{b_{\Fc_2}^\ast} & &(\wdg^p {V}^\ast\ot \Fc_2)^{\Fg\Fl_n}\ar[rr]^{b_{\Fc_2}^\ast}&& \cdots }
\end{equation}
We still need to calculate the total cohomology of \eqref{bicomp-V*}. 
Via the above identification, that bicomplex is quaisi-isomorphic with the following one,
 \begin{align}\label{bicomp-V*-rel}
  \begin{split}
 \xymatrix{  \vdots & \vdots
 &\vdots &&\\
  (\wdg^2 {V}^\ast)^{\Fg\Fl_n}  \ar[u]^{\p_{ {V}^\ast}}\ar[r]^{b^\ast_{\Fc_2}~~~~~~~}& (  \wdg^2 {V}^\ast\ot{\Fc_2})^{\Fg\Fl_n} \ar[u]^{\p_{ {V}^\ast}} \ar[r]^{b^\ast_{\Fc_2}}&  (\wdg^2 {V}^\ast\ot{\Fc_2}^{\ot 2} )^{\Fg\Fl_n}\ar[u]^{\p_{ {V}^\ast}} \ar[r]^{~~~~~~~~~b^\ast_{\Fc_2}} & \hdots&  \\
   ({V}^\ast )^{\Fg\Fl_n} \ar[u]^{\p_{ {V}^\ast}}\ar[r]^{b^\ast_{\Fc_2}~~~~~}&   ({V}^\ast\ot{\Fc_2})^{\Fg\Fl_n} \ar[u]^{\p_{ {V}^\ast}} \ar[r]^{b^\ast_{\Fc_2}}&  (  {V}^\ast\ot {\Fc_2}^{\ot 2})^{\Fg\Fl_n} \ar[u]^{\p_{ {V}^\ast}} \ar[r]^{~~~~~b^\ast_{\Fc_2} }& \hdots&  \\
   \Cb\ar[u]^{\p_{ {V}^\ast}}\ar[r]^{b^\ast_{\Fc_2}~~~~~~~}& ( {\Fc_2})^{\Fg\Fl_n} \ar[u]^{\p_{ {V}^\ast}}\ar[r]^{b^\ast_{\Fc_2}}&(  {\Fc_2}^{\ot 2} )^{\Fg\Fl_n}\ar[u]^{\p_{ {V}^\ast}} \ar[r]^{~~~~~b^\ast_{\Fc_2}} & \hdots&. }
\end{split}
\end{align}
One uses \eqref{decopmpose-G+},  \eqref{decopmpose-G2+},  and \eqref{F-2-alpha} on one hand and \cite[Proposition 2.1, Definition 2.4]{mr09} on the other hand to observe that
\begin{equation}
\Fc_2=\Fc_\Hc \, ,
\end{equation}
 where $\Fc_\Hc \subset \Hc_n$ is the Hopf subalgebra, 
 denoted by $\Fc (\Nbo)$ in \cite{mr09}, such that
 $\Hc_n^{\rm cop}=\Fc_\Hc\acl \Uc(\Fg\Fl_n^{\rm aff})$.

After this identification one applies \eqref{action-Y-F2}, \eqref{coaction-F-1-V-new}, and \eqref{action-Y-V} to observe that the bicomplex \eqref{bicomp-V*-rel} is identified with the bicomplex (4.12) in \cite{mr11} for $\Fh=\Fg\Fl_n$, or alternately with the bicomplex (3.42) in \cite{mr09}. The total cohomology of the latter bicomplex  is computed 
in \cite[Theorem 3.25]{mr09}, and  shown to be isomorphic to $P_{2n}[c_1, \ldots , c_n]$.
\end{proof}

There is an alternative way to formulate the above result, which relies on
identifying, as coalgebras, 
 $\Kc_n$ and the quotient coalgebra $\Qc_n:=\Hc_n\ot_{\Uc(\Fg\Fl_n)}\Cb$.    

First, one identifies the copposite coalgebra $\Qc_n^{\cop}$  to
 $\Fc_\Hc\acl \Uc(\Fg\Fl_n^{\rm aff})\ot_{\Uc(\Fg\Fl_n)}\Cb$ as the $\Uc(\Fg\Fl_n)$, 
is isomorphic to the crossed  product coalgebras $\Fc_\Hc\cl \Uc(V)$, via the map
 
\begin{align}\label{map-betta}
\begin{split}
\kappa_\Hc: \Hc_n^{\cop}\ot_{\Uc(\Fg\Fl_n)}\Cb &= (\Fc_\Hc\acl \Uc(\Fg\Fl_n^{\rm aff}))\ot_{\Uc(\Fg\Fl_n)}\Cb 
\ra \Fc_\Hc\cl \Uc(V) \\
&  \kappa_\Hc(f\acl XY\ot_{\Uc(\Fg\Fl_n)}1) =\ve(Y)f\cl X ;
 \end{split}
\end{align}
here $\Fc_\Hc$ coacts on  $\Uc(V)$ via  its coaction on $\Uc(\Fg\Fl_n^{\rm aff})$  followed by the 
projection $\pi:\Uc(\Fg\Fl_n^{\rm aff})\ra \Uc(V)$ that is defined by $\pi(XY)=\ve(Y)X$, and is a map of coalgebras.  It is clear that $\kappa_\Hc$ is an isomorphism. 

We next consider the map $\kappa^\dagger: \Fc(\Gb^\dagger)\cl \Uc(V)\ra \Fc_\Hc\cl \Uc(V)$ by
the formula
\begin{align} \label{kappa}
 \kappa^\dagger (\b^i_{j_1,j_2,\ldots j_s}\cl u)= \d^i_s\a^s_{j_1,j_2,\ldots,j_s}  \cl u ,
\end{align}
where $\d^i_j$ is the  Kronecker's delta tensor  and $\a^i_j:=\d^i_j$.   
This map is quite natural, being the same as $r_\Hc \ot\Id$, where
 $r_\Hc:\Fc(\Gb^\dagger)\ra \Fc(\Nbo)\cong\Fc_\Hc$ 
is the restriction map, dual to the inclusion $\Nbo\hookrightarrow \Gb^\dagger$.

\begin{theorem} \label{thm-alt-chern}
The map $\kappa :=\, \kappa_\Hc^{-1}\circ  \kappa^\dagger:\Fc(\Gb^\dagger)\cl \Uc(V)\ra 
 \Hc_n^{\cop}\ot_{\Uc(\Fg\Fl_n)}\Cb $ is a morphism of coalgebras which
 induces a quasi-isomorphism of Hochschild cohomology complexes
\begin{align*}
 \{\;^{\s}\Cb\ot {\Kc^\cop}^{\ot \bullet} ,\,  b\} \ra 
 \{\Cb_\d \ot_{\Uc(\Fg\Fl_n)} {\Qc_n^{\cop}}^{\ot \bullet} , \, b\} .
\end{align*}
This in turn yields an isomorphism $HC^\ast(\Kc; \;^{\s^{-1}}\Cb)\cong HC(\Hc_n, GL_n;\Cb_\d)$. 
\end{theorem}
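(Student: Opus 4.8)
The plan is to establish the three assertions of the theorem in sequence: that $\kappa$ is a coalgebra morphism, that the induced map on Hochschild complexes is a quasi-isomorphism, and finally to deduce the cyclic cohomology isomorphism. First I would verify that $\kappa = \kappa_\Hc^{-1}\circ\kappa^\dagger$ is a morphism of coalgebras. By construction $\kappa_\Hc$ from \eqref{map-betta} is already known to be a coalgebra isomorphism, so this reduces to checking that $\kappa^\dagger$ in \eqref{kappa} respects comultiplication. Since $\kappa^\dagger = r_\Hc \ot \Id$ where $r_\Hc:\Fc(\Gb^\dagger)\ra\Fc(\Nbo)$ is the restriction map dual to the subgroup inclusion $\Nbo\hookrightarrow\Gb^\dagger$, and restriction along a subgroup is always a Hopf algebra map, the coalgebra compatibility is automatic; but one must also check that $r_\Hc$ intertwines the $\Fc(\Gb^\dagger)$-coaction and the $\Fc_\Hc$-coaction on $\Uc(V)$ that define the respective crossed product coalgebra structures $\cl$. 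This is where the factorization $\Fc(\Gb^\dagger)\cong\Fc(\Gb_0^\dagger)\cl\Fc(\Nbo)$ of Proposition \ref{proposition-decompos-F-1-F-2}, together with the identifications \eqref{PHI} of how the generators $\b^i_{j_1,\ldots,j_k}$ decompose as $\a^i_s\cl\a^s_{j_1,\ldots,j_k}$, does the bookkeeping: under $r_\Hc$ one simply kills the $\Fc(\Gb_0^\dagger)=\Pc(\GL_n)$ factor, consistent with the formula $\kappa^\dagger(\b^i_{j_1,\ldots,j_s}\cl u)=\d^i_s\a^s_{j_1,\ldots,j_s}\cl u$.

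Next I would address the quasi-isomorphism of Hochschild complexes. The strategy is to interpose the bicomplexes already constructed. On the $\Kc_n$ side, the Hochschild complex $\{{}^{\s}\Cb\ot{\Kc^\cop}^{\ot\bullet},b\}$ computes, via the bicrossed product structure $\Kc_n^\cop\cong\Fc_\Kc\acl\Uc(V)$ and the van Est type argument imported from \cite{RS} (as used in \S3), the total cohomology of the bicomplex \eqref{bicomp-V*}; then by the further factorization $\Fc_\Kc\cong\Fc(\Gb_0^\dagger)\cl\Fc(\Nbo)$ and Proposition \ref{Hoch} this reduces to the bicomplex \eqref{bicomp-V*-rel}. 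On the $(\Hc_n,\GL_n)$ side, the relative Hochschild complex $\{\Cb_\d\ot_{\Uc(\Fg\Fl_n)}{\Qc^\cop}^{\ot\bullet},b\}$ — where $\Qc_n=\Hc_n\ot_{\Uc(\Fg\Fl_n)}\Cb$ — is, by the analysis in \cite{mr09, mr11}, quasi-isomorphic to the very same bicomplex, once one uses the identification $\Fc_2=\Fc_\Hc$ recorded in the proof of Theorem \ref{theorem-chern}. The point of introducing the explicit map $\kappa$ at the chain level is that it realizes, on the nose, the comparison between these two reductions: one checks that $\kappa$ carries the standard cyclic/Hochschild structure on one side to the other by a direct computation on generators, using \eqref{kappa} and the pairing properties \eqref{pairing-properties}, and then invokes the spectral sequence comparison theorem — since $\kappa^\dagger$ induces an isomorphism on the associated graded pieces (the rows collapse to $\GL_n$-invariants by Proposition \ref{Hoch} on both sides), the induced map on total cohomology is an isomorphism.

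The main obstacle I expect is the chain-level compatibility of $\kappa$ with the Hochschild coboundary $b$, i.e.\ verifying that the diagram
\begin{align*}
\xymatrix{
{}^{\s}\Cb\ot{\Kc^\cop}^{\ot q}\ar[r]^{b}\ar[d]_{\id\ot\kappa^{\ot q}} & {}^{\s}\Cb\ot{\Kc^\cop}^{\ot(q+1)}\ar[d]^{\id\ot\kappa^{\ot(q+1)}}\\
\Cb_\d\ot_{\Uc(\Fg\Fl_n)}{\Qc^\cop}^{\ot q}\ar[r]^{b} & \Cb_\d\ot_{\Uc(\Fg\Fl_n)}{\Qc^\cop}^{\ot(q+1)}
}
\end{align*}
commutes. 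The subtlety is that $b$ involves both the comultiplication (handled by $\kappa$ being a coalgebra map) and the module/comodule structure twisting by the group-like $\s$ respectively the character $\d$; one must check that $\kappa$ is compatible with the $\Uc(\Fg\Fl_n)$-balancing on the right and that it intertwines ${}^{\s^{-1}}\Cb$ with $\Cb_\d$ — concretely, that the group-like $\s_{\Fc_\Kc}=\det(\b^i_j)$ maps under $r_\Hc$ (which sends $\b^i_j\mapsto\d^i_j$) to $1$, so that the twist on the $\Kc_n$ side becomes trivial on $\Uc(V)$, matching the modular character $\d=\Tr\circ\ad$ of $\Fg\Fl_n$ on the relative side. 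Once this compatibility and the balancing are in place, the third assertion — that the Hochschild quasi-isomorphism upgrades to a cyclic (hence periodic) cohomology isomorphism $HC^\ast(\Kc_n;{}^{\s^{-1}}\Cb)\cong HC^\ast(\Hc_n,\GL_n;\Cb_\d)$ — follows formally from the Cuntz--Quillen/Connes mechanism: a map of mixed (or cyclic) complexes that is a quasi-isomorphism on the underlying Hochschild complexes induces an isomorphism on cyclic cohomology, since both are computed by the same $(b,B)$-bicomplex and one compares the filtration by columns.
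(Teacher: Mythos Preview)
Your approach is essentially correct and broadly parallel to the paper's, but diverges in the last step, and the paper's route there is both simpler and avoids a verification you have left implicit.

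For the coalgebra-map assertion you agree with the paper: both reduce to Proposition~\ref{proposition-decompos-F-1-F-2} and the fact that $r_\Hc$ is restriction along a subgroup. For the Hochschild quasi-isomorphism you propose a spectral sequence comparison via the bicomplexes \eqref{bicomp-V*} and \eqref{bicomp-V*-rel}, with Proposition~\ref{Hoch} giving the $E_1$-isomorphism. The paper is terser: after noting that $\kappa^\dagger(\b^i_j)=\d^i_j$ makes $\kappa$ a chain map, it simply invokes that both Hochschild cohomologies have already been computed (Theorem~\ref{theorem-chern} for $\Kc_n$ and \cite[Theorem 3.25]{mr09} for $(\Hc_n,\GL_n)$), and in particular are concentrated in degree~$n$. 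Your spectral sequence argument is a legitimate and somewhat more explicit way to see that the \emph{specific} map $\kappa$ realizes the isomorphism.

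The real difference is in the passage from Hochschild to cyclic. You appeal to the general principle that a morphism of mixed complexes which is a Hochschild quasi-isomorphism is a cyclic quasi-isomorphism. This requires knowing that $\kappa$ commutes with the Connes boundary $B$, which you do not check. The paper's argument sidesteps this entirely: it observes that $B$ vanishes at the level of both Hochschild complexes, and that the Hochschild cohomology on each side is concentrated in a single degree. With $B=0$ the $(b,B)$-bicomplex degenerates and the cyclic cohomology coincides with the Hochschild cohomology, so the Hochschild quasi-isomorphism \emph{is} the cyclic isomorphism, with nothing further to verify. Of course $B=0$ also trivially supplies the mixed-complex compatibility your argument needs, so your route closes once you add this observation; but the paper's formulation is more direct.
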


\begin{proof}
Proposition \ref{proposition-decompos-F-1-F-2} guarantees that  
$\kappa^\dagger$ is a map of coalgebras. Using the definition \eqref{kappa},
which in particular implies that
$\kappa^\dagger(\b^i_j)=\d^i_j$, it is easy to check that $\kappa$ induces
a chain map at the level of Hochchild complexes. 
 
 The second claim follows by combining the following two facts: 
 the vanishing of the Connes boundary map $B$ at the level of both Hochschild complexes;
 the vanishing of the Hochschild cohomology groups outside degree $n$ 
of both sides, ensured by Theorem \ref{theorem-chern}, resp. \cite[Theorem 3.25]{mr09}. 
\end{proof}

\section{Geometric representation of the Hopf cyclic Chern classes} \label{S4}

In order to exhibit concrete cocycles representing a basis of $HP^*(\Kc_n ;\, ^\s\Cb)$,
we take the same approach as in~\cite{M-GC, M-EC}. 
The gist of that construction is summarized below. 
\smallskip
 
With $M=\Rb^n$ and $\Gb = \Diff(\Rb^n)^\d$, let 
$\{\Om^\bullet (|\bar\triangle_{\Gb} M|), d \}$ be
the complex of Dupont's~\cite{Dupont} complex of de Rham simplicial
compatible forms. We will actually work with its homogeneous version
$\{\Om^\bullet (|\bar\triangle_{\Gb} M|), d \}$. The identification
between compatible forms $\om = \{\om_p\}_{p\geq 0}$ in the first
complex and their homogeneous counterpart $\bar{\om} = \{\om_p\}_{p\geq 0}$
in the second, \ie satisfying
\begin{align*}  
 \bar{\om} (\tb; \rho_0 \rho, \ldots , \rho_p\rho, \cdot) = 
 \rho^*\bar{\om} (\tb; \rho_0, \ldots , \rho_p, \cdot) ,
\quad \forall \, \rho, \rho_i \in \Gb ,
\end{align*}
is made via the exchange relations
 \begin{align*} 
 \begin{split}
  &\om(\tb; \phi_1, \ldots , \phi_p, x) = 
  \bar{\om} (\phi_1 \cdots \phi_p ,\,  \phi_2 \cdots \phi_p ,\,
  \ldots , \phi_p, x) \, , \\
  \text{resp.} \quad 
 &\bar{\om}(\tb; \rho_0, \ldots , \rho_p , \cdot) = 
 \rho_p^* \om(\tb; \rho_0 \rho_1^{-1}, \rho_1 \rho_2^{-1} ,
 \ldots , \rho_{p-1} \rho_p^{-1}, \cdot) .
 \end{split}
 \end{align*}
 
In~\cite{M-EC} we introduced the subcomplex  
$\{\Om_{\rm rd}^\bullet (|\bar\triangle_{\Gb} M|), d \}$ of the above complex
consisting of {\em regular differentiable} simplicial de Rham forms.
 These are compatible forms $\om = \{\om_p\}_{p\geq 0}$
on the geometric 
realization $|\triangle_{\Gb} M| = \prod_{p=0}^\ify \D^p \ts \triangle_{\Gb} M[p]$ of
the simplicial manifold $\triangle_{\Gb} M = \{ \Gb^p \ts M\}_{p\geq 0}$, whose
components (expressed in homogeneous group coordinates, but with the 
``overline'' mark omitted from the notation from now on) have the property that
 \begin{align} \label{diffo}
 \begin{split}
\om_p (\tb; \rho_0, \ldots , \rho_p, x) = 
\sum P_{I, J} \left(\tb; x, j^k_x(\rho_0), \ldots , j^k_x(\rho_p) \right) dt_I \wg dx_J ,
 \end{split}
\end{align}
with $P_{I, J}$ depending polynomially of a finite number of
jet components of $\rho_a$,  $1 \leq a \leq p$ and of $\big(\det \rho'_a (x)\big)^{-1}$,
where $\rho'_a(x)$ signifies the Jacobian matrix 
$ \rho'_a(x)_i^j = \p_i\rho^j_a (x)$, $1\leq i,j \leq n$.
 
 $ \Om_{\rm rd}^{\bullet} (|\triangle_{\Gb}M|)$ is a differential graded
 algebra, whose corresponding cohomology ring $H_{\rm rd}^\bullet  (|\bar\triangle_{\Gb} M|, \Cb)$ 
 was shown in~\cite[Thm. 1.4]{M-EC} to be
 isomorphic to the truncated polynomial ring of Chern classes 
 $P_{2n} [c_1, \ldots, c_n]$.

 More precisely, let  $\nb$ be the flat connection on the frame bundle
$FM \ra M$, with
 connection form \, $\om_\nb = \left(\om^i_j \right)$, 
$\,  {\om}^i_j \, :=  \, ({\bf y}^{-1})^i_{\lambda} \, d{\bf y}^{\lambda}_j = \big({\bf y}^{-1} \, d{\bf y}\big)^i_j$,
 \, $i, j =1, \ldots , n$. The associated simplicial connection form-valued matrix
   $\hat{\om}_\nb = \{ \hat\om_p \}_{p \in \Nb}$ on the frame bundle of  $|\triangle_{\Gb} M|$
   has components
 \begin{align} \label{scone}
 \hat\om_p (\tb ; \rho_0, \ldots , \rho_p) : = \sum_{i=0}^p t_i \rho_i^* (\om_\nb) ;
\end{align}
accordingly, the 
 simplicial curvature form 
 $\hat{\Om}_\nb : = d \hat{\om}_\nb + \hat{\om}_\nb \wg \hat{\om}_\nb$
has components  
\begin{align}  \label{scurv}
\begin{split}
\hat{\Om}_p (\tb ; \rho_0, \ldots , \rho_p)& = \, \sum_{i=0}^p dt_i \wdg \rho_i^* (\om_\nb) \, -
\, \sum_{i=0}^p t_i   \rho_i^* (\om_\nb) \wdg \rho_i^* (\om_\nb) \\
&+ \, \sum_{i, j=0}^p t_i t_j \,  \rho_i^* (\om_\nb) \wdg  \rho_j^* (\om_\nb) .
\end{split}
 \end{align} 
Under the action of $\rho \in \Gb$ on $FM$ the pull-back of the connection form is 
\begin{align} \label{phicon} 
\rho^* ({\om}^i_j )&={\om}^i_j \, +\, \g_{jk}^i (\rho)  \,\theta^k  , \qquad 
 \text{where} \qquad \theta^k = ({\bf y}^{-1}\cdot dx)^k  \\ \notag
\g^i_{j \, k} (\rho) (x, {\bf y})&=\left( {\bf y}^{-1} \cdot
{\rho}^{\prime} (x)^{-1} \cdot \part_{\lambda} {\rho}^{\prime} (x) \cdot
{\bf y}\right)^i_j \, {\bf y}^{\lambda}_k ,\quad x \in M, \, \bf y \in \GL_n(\Rb) .
\end{align}
This clearly shows that the simplicial forms $\hat{\om}_j^i$ and $\hat{\Om}_j^i$ belong to the 
regular differentiable de Rham complex $\Om_{\rm rd}^\bullet (|\bar\triangle_{\Gb} FM|)$.

 The cohomology $H_{\rm rd}^\bullet (|\bar\triangle_{\Gb} M|)$ of the 
 Dupont complex for $M$
 was shown in \cite{M-EC} to be isomorphic to the truncated polynomial ring of Chern classes
 $P_{2n}[c_1, \ldots , c_n]$. More precisely, by~\cite[Thms. 1.3 and Eq. (1.13)]{M-EC},
 the choice of the connection gives rise to a
 canonical quasi-isomorphism of complexes
\begin{align} \label{ChernWeil}
 \Cc^{\GL_n}_\nabla : \hat{W}(\Fg\Fl_n,  \GL_n) \ra 
\Om_{\rm rd}^\bullet (|\bar\triangle_{\Gb} M|) ,
\end{align}
which in fact reproduces the classical Chern-Weil construction for the $\Diff$-equivariant case.
Indeed, the left hand stands for the subalgebras consisting of the $ \GL_n$-basic elements
in  the quotient $\,\hat{W}(\Fg\Fl_n) = W(\Fg\Fl_n)/ \Ic_{2n}$ of the Weil
algebra $\, W(\Fg\Fl_n) = \wg^\bullet \Fg\Fl^*_n \ot S(\Fg\Fl_n)$ by the ideal
generated by the elements of $S(\Fg\Fl_n) $ of degree $> 2n$. The cohomology of 
$\hat{W}(\Fg\Fl_n,  \GL_n)$ is well-known to be
isomorphic to $P_{2n} [c_1, \ldots, c_n]$, 
 with $c_1, \ldots, c_n$ given by the standard generators of the ring $S(\Fg\Fl_n)^{\GL_n} $ of
$\GL_n (\Cb)$-invariant polynomials on $\Fg\Fl_n (\Cb)$,
\begin{align} \label{uchern}
 c_k (A) \, = \, \sum_{1\le i_1 < \ldots < i_k \le n} \sum_{\lambda \in S_k} 
 (-1)^\lambda A^{i_1}_{\lambda(i_1)} \cdots A^{i_k}_{\lambda(i_k)} ,
  \quad A \in \Fg \Fl_n (\Cb).
\end{align}
The corresponding Chern forms 
$c_k (\hat\Om_\nb) \in \Om_{\rm rd}^\bullet (|\bar\triangle_{\Gb} FM|)$ 
are $\GL_n$-basic and thus
descend to forms in $ \Om_{\rm rd}^{2k} (|\triangle_{\Gb}M|)$. 
As a result, the collection of closed forms
\begin{align} \label{chern-forms}
  c_J (\hat{\Om}_\nb) \, = \, 
   c_{j_1}(\hat{\Om}_\nb) \wg  \ldots \wg c_{j_q} (\hat{\Om}_\nb)
  \in \Om_{\rm rd}^{2|J|} (|\triangle_{\Gb}M| ,
\end{align}
with $J= (j_1 \leq \ldots \leq j_q)$ and $|J| := j_1 +\ldots + j_q \leq n$, 
give a complete set of representatives for a (linear) 
basis of the cohomology $H_{\rm rd}^\bullet  (|\bar\triangle_{\Gb} M|, \Cb)$
of the Rham complex $\{\Om_{\rm rd}^\bullet (|\triangle_{\Gb} M|), d \}$.
\smallskip

  Consider now the subcomplex $\{\bar{C}_{\rm rd}^\bullet \left(\Gb, \Om^\bullet (M)\right), \d, d \}$
  of the (homogeneous version of the) Bott bicomplex (see~\cite{Bott*, BSS}) 
  $\{\bar{C}^\bullet \left(\Gb, \Om^\bullet (M)\right), \d, d \}$,
  formed of {\em regular differentiable} homogeneous group cochains. 
  By definition (\cf~\cite{M-EC}), a cochain $\om \in \bar{C}_{\rm rd}^p \left(\Gb, \Om^q (M)\right)$
if for any local chart $U \subset M$ with coordinates 
$(x^1, \ldots , x^n)$,
\begin{align} \label{difco}
\om (\rho_0, \ldots , \rho_p, x) = 
\sum P_I \left(x, j^k_x(\rho_0), \ldots , j^k_x(\rho_p) \right) dx_I ,
\end{align}
where the coefficients $P_I$  as in \eqref{diffo}. By~\cite[Thm. 1.1]{M-EC}, 
which is a variant of Dupont's~\cite[Theorem 2.3]{Dupont}, the operation of
integration along the fiber
\begin{align} \label{circint}
 \oint_{\D^\bullet} : \Om_{\rm rd}^\bullet (|\bar\triangle_{\Gb} M|) \ra 
\bar{C}_{\rm rd}^{\bullet} \left(\Gb, \Om^\ast (M)\right)
\end{align}
establishes a quasi-isomorphism
between the complexes  $\{\Om_{\rm rd}^\bullet (|\bar\triangle_{\Gb} M|), d \}$ and
$\{\bar{C}_{\rm rd}^{\rm tot} \left(\Gb, \Om^\ast (M)\right), \d \pm d \}$. 
Thus, the composition of \eqref{ChernWeil} and \eqref{circint}
\begin{align} \label{ChernWeil2}   
 \Dc^{\GL_n}_\nabla :=  \oint_{\D^\bullet} \circ  \Cc^{\GL_n}_\nabla 
 : \hat{W}(\Fg\Fl_n,  \GL_n) \ra 
\bar{C}_{\rm rd}^{\rm tot \, \bullet} \left(\Gb, \Om^\ast (M)\right)
\end{align} 
 is also a quasi-isomorphism. In conclusion the cocycles
 \begin{align} \label{basis-forms}
  C_J (\hat{\Om}_\nb) \, := \, \oint_{\D^\bullet} c_J (\hat\Om_\nb) ,
  \quad  J= (j_1 \leq \ldots \leq j_q),\, \,  |J| \leq n  \} .
  \end{align}
represent a basis for the cohomology $H_{\rm rd, \Gb}^\bullet  (M, \Cb)$
of $\{ \bar{C}_{\rm rd}^{\rm tot} \left(\Gb, \Om^\ast (M)\right), \d \pm d \}$.
 
\medskip

On the other hand, in~\cite{mr11} we have introduced 
 Hopf cyclic counterparts of the Bott complexes. In particular,
 in the case of $\Fc_\Kc$ 
the analogue of the homogeneous Bott complex
is the anti-symmetrized and coinvariant subcomplex of  \eqref{bicomp-V*},
\begin{equation}\label{coinv}
\bar{C}^{\bullet} (\wg V^\ast, \wg\Fc_\Kc)\, = (
\wg^pV^\ast\ot \wdg\Fc_\Kc)^{\Fc_\Kc}  , \quad V\equiv M = \Rb^n ,
\end{equation}
defined as follows. 
An element  $ \sum \a\ot \td f\in ( \wg^pV^\ast\ot  \wdg^{q+1}\Fc_\Kc)^{\Fc_\Kc} $
if it satisfies the $\Fc_\Kc$-coinvariance condition:
\begin{equation}\label{coinv-condition}
\sum \a\ns{0}\ot \;\td{f}\ot S(\a\ns{1})=\sum \; \a\ot\td{f}\ns{0}\ot \td{f}\ns{1} ;
\end{equation}
here for  $\td f=f^0\wdots f^q$, we have denoted
\begin{align} \label{coinv-not}
\td f\ns{0}\ot \td f\ns{1}\, = \, f^0\ps{1}\wdots
f^q\ps{1}\ot f^0\ps{2}\cdots f^q\ps{2}.
\end{align}
One  identifies  the anti-symmetrized-coinvariant   bicomplex as a  homotopy retraction sub-bicomplex of   \eqref{bicomp-V*} as in \cite{mr11}. 
\begin{align}\label{wedge-coinv-FK}
\begin{xy} \xymatrix{  \vdots & \vdots
 &\vdots &&\\
 \wdg^2V^\ast  \ar[u]^{\p_{{\wg}}}\ar[r]^{b_{{\wg}}~~~~~~~}&  (\wdg^2V^\ast\ot\wdg^2 \Fc_\Kc)^{\Fc_\Kc} \ar[u]^{\p_{{\wg}}} \ar[r]^{b_{{\wg}}}& (\wdg^2V^\ast\ot\wdg^3\Fc_\Kc)^{\Fc_\Kc} \ar[u]^{\p_{{\wg}}} \ar[r]^{~~~~~~~~~b_{{\wg}}} & \hdots&  \\
 V^\ast  \ar[u]^{\p_{{\wg}}}\ar[r]^{b_{{\wg}}~~~~~}&  (V^\ast\ot\wdg^2 \Fc_\Kc)^{\Fc_\Kc} \ar[u]^{\p_{{\wg}}} 
 \ar[r]^{b_{{\wg}}}& (V^\ast\ot\wdg^3\Fc_\Kc)^{\Fc_\Kc} \ar[u]^{\p_{{\wg}}} \ar[r]^{~~~~~b_{{\wg}} }& \hdots&  \\
 \Cb  \ar[u]^{\p_{{\wg}}}\ar[r]^{b_{{\wg}}~~~~~~~}&  (\Cb\ot\wdg^2 \Fc_\Kc)^{\Fc_\Kc} \ar[u]^{\p_{{\wg}}} \ar[r]^{b_{{\wg}}}& (\Cb\ot\wdg^3\Fc_\Kc)^{\Fc_\Kc} \ar[u]^{\p_{{\wg}}} \ar[r]^{~~~~~b_{{\wg}}} & \hdots&,  }
\end{xy}
\end{align}
The identification simplifies the action of $V$ on $\bigwedge \Fc_\Kc$  into the diagonal action
\begin{align} \label{diagact}
X\rt (f^0\odots f^q )=\sum_{i=0}^q f^0\odots X\rt f^i\odots f^q  .
\end{align}
 and also the coboundaries are simplified to
\begin{align}\label{b-F-wedge}
b_{{\wg}}(\a\ot f^0\wdots f^q)= \a\ot 1\wdg f^0\wdots f^q,
\end{align}
and
\begin{align}
\begin{split}
&\p_{{\wg}}(\a\ot f^0\wdots f^q)= -\, \sum_i \t^i\wdg \a\ot \wedge\ot X_i\rt(f^0\wdots f^q).
\end{split}
\end{align}
 The total cohomology of this bicomplex is denoted by 
 $HP_{\rm CE}^\bullet (\Kc_n, ^{\s^{-1}}\Cb)$.
\smallskip

A similar bicomplex is defined for $\Fc_\Hc)$, namely 
\begin{equation} \label{wedge-coinv-FH}
\bar{C}^{\bullet} (\wg V^\ast, \wg\Fc_\Hc)^{\Fg\Fl_n}:=
\bigl((\wg^pV^\ast\ot \wdg^{q+1} \Fc_\Hc)^{\Fc_\Hc}\bigr)^{\Fg\Fl_n} ,
\end{equation}
and the restriction map $r_\Hc:\Fc_\Kc\ra \Fc_\Hc$
induces a  quasi-isomorphism between  \eqref{wedge-coinv-FK} and \eqref{wedge-coinv-FH}. 
 
 The total cohomology of this bicomplex is denoted by $HP_{\rm CE}^\bullet (\Hc_n, \GL_n : \Cb_\d)$
\smallskip

At this stage we recall that in~\cite[\S 3.2]{mr11}  we have constructed a map 
of bicomplexes, $\Theta$ from the bicomplex
 $\bar{C}^{\bullet} (\wg \Fg_{\rm aff}^\ast, \wg\Fc_\Hc)^{\Fg\Fl_n}$ of antisymmetrized
 $\Fc_\Hc$-coinvariant cochains  
 to $\bar{C}^{\bullet}(\Gb, \Om^*(FM))$. 
 In order to write its expression, we also need to recall (\cf \cite{mr09}) that
there is a canonical isomorphism $\etabar: \Hc^{\cop}_{\rm ab} \rightarrow \Fc_\Hc$ 
and $\, \dbar = \etabar^{-1}$ denotes its inverse. For $f \in \Fc_\Hc$, one defines the function
$\gbar_\Hc(f) :\Gb \ra C^\ify (G)$ by
 \begin{align} \label{gbar}
 \dbar(S(f))(U_\phi) \, = \,  \gbar_\Hc(f)(\phi) \,U^*_\phi ,  \qquad \fl \, \phi \in \Gb ,
\end{align}
where the left hand side uses the action of $\Hc_n$ on the crossed product algebra
$C^\ify (G_{\rm aff} ) \rtimes \Gb$, with 
$G_{\rm aff} =V \ltimes \GL_n(\Rb)$.  The function $ \gbar_\Hc(f)(\phi) \in C^\ify (G) $,
depends algebraically on the components of the $k$-jet 
of $\phi$, for some $k \in \Nb$. For example, if 
$f= \eta^i_{j k \ell_1 \ldots \ell_r}$ is one of the canonical algebra generators of $\Fc_\Hc$,
\begin{equation} \label{fcoord}
 \eta^i_{j k \ell_1 \ldots \ell_r} (\psi) =  \g^i_{j k \ell_1 \ldots
\ell_r} (\psi)(e) ,  \quad \psi \in \Nbo
 \end{equation}
then
\begin{align}  \label{bargeta2}
\gbar_\Hc(S(\eta^i_{j k \ell_1 \ldots \ell_r}))(\psi^{-1})\, = \, \g^i_{j k \ell_1 \ldots \ell_r} (\psi) .
\end{align}

With these notions clarified, the map $\Theta$ is given by the formula
\begin{align} \label{Theta}
\begin{split}
\Theta&\big(\sum_{I} \a_I\ot \lu{I}f^0 \wg \cdots
 \wg \lu{I}f^p \big)(\phi_0, \dots ,\phi_p)=\\
&\sum_{I} \sum_{\s \in S_{p+1}} 
(-1)^\s   \gbar_\Hc(S( \lu{I}f^{\s(0)}))(\phi_0^{-1})\dots \gbar_\Hc(S(\lu{I}f^{\s(p)}))(\phi_p^{-1}){{\td\a_I}}  ,
\end{split}
\end{align}
where $\Fg_{\rm aff}$ is the Lie algebra of the affine group $G_{\rm aff} \cong FM$, and 
$\{\td\a_I\}$ are the left-invariant form associated to the elements of a basis 
$\{\a_I\} \subset \wg^\bullet \Fg_{\rm aff}^*$.  

From its very definition, it is obvious that $\Theta$ actually lands in 
$\bar{C}_{\rm rd}^{\bullet}(\Gb, \Om^*(FM))$. It is also transparent that
 $\Theta$ is injective.

On the other hand, $\Theta$ is clearly $\GL_n$-equivariant and thus, by restriction to
$\GL_n$-invariants, it gives the map
$\Theta^{\GL_n}: \bar{C}^{\bullet} (\wg V^\ast, \wg\Fc_\Hc)^{\Fg\Fl_n}
 \ra \bar{C}_{\rm rd}^{\bullet}(\Gb, \Om^*(M))$,
 \begin{align} \label{Theta-rel}
\begin{split}
\Theta^{\GL_n}&\big(\sum_{|I|=q} dx_I\ot \lu{I}f^0 \wg \cdots
 \wg \lu{I}f^p \big)(\phi_0, \dots ,\phi_p)=\\
&\sum_{I} \sum_{\s \in S_{p+1}} 
(-1)^\s   \gbar_\Hc(S( \lu{I}f^{\s(0)}))(\phi_0^{-1})\dots \gbar_\Hc(S(\lu{I}f^{\s(p)}))(\phi_p^{-1}){dx_I}  .
\end{split}
\end{align}

\begin{theorem} \label{vErd}
 The chain map  
 $\Theta^{\GL_n}: \bar{C}^{\bullet} (\wg V^\ast, \wg\Fc_\Hc)^{\Fg\Fl_n} \ra
  \bar{C}_{\rm rd}^{\bullet}(\Gb, \Om^*(M))$
 is a quasi-isomorphism. 
 \end{theorem}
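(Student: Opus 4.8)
The plan is to compare both ends of $\Theta^{\GL_n}$ with spaces whose cohomology is already identified, and then to check that $\Theta^{\GL_n}$ hits a spanning family of cohomology classes on the target. On the source side, $\bar{C}^{\bullet}(\wg V^\ast,\wg\Fc_\Hc)$ of \eqref{wedge-coinv-FH} has total cohomology $HP_{\rm CE}^{\bullet}(\Hc_n,\GL_n;\Cb_\d)$, which by \cite[Theorem 3.25]{mr09} --- equivalently, by Theorem \ref{theorem-chern} together with the quasi-isomorphism $r_\Hc$ between \eqref{wedge-coinv-FK} and \eqref{wedge-coinv-FH} and the van Est type identification of \S\ref{S3} --- is the truncated polynomial ring $P_{2n}[c_1,\ldots,c_n]$, finite dimensional in each degree. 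On the target side, the composite $\Dc^{\GL_n}_\nabla=\oint_{\D^\bullet}\circ\,\Cc^{\GL_n}_\nabla$ of \eqref{ChernWeil2} is a quasi-isomorphism, so $H^{\bullet}\bigl(\bar{C}_{\rm rd}^{\bullet}(\Gb,\Om^\ast(M))\bigr)$ is isomorphic to $H^{\bullet}\bigl(\hat{W}(\Fg\Fl_n,\GL_n)\bigr)\cong P_{2n}[c_1,\ldots,c_n]$ and, by \cite{M-EC}, admits the basis of Chern cocycle classes $[C_J(\hat{\Om}_\nb)]$, with $J=(j_1\le\cdots\le j_q)$ and $|J|\le n$, where $C_J(\hat{\Om}_\nb)=\oint_{\D^\bullet}c_J(\hat{\Om}_\nb)$ as in \eqref{basis-forms}. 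Since a chain map between complexes whose cohomologies have equal finite dimensions in every degree is a quasi-isomorphism as soon as it is surjective on cohomology, it suffices to show that each $C_J(\hat{\Om}_\nb)$ is cohomologous to an element of the image of $\Theta^{\GL_n}$.

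To produce such preimages I would, for each $J$ with $|J|\le n$, exhibit a cocycle $z_J\in\bar{C}^{\bullet}(\wg V^\ast,\wg\Fc_\Hc)$ representing the universal class $c_{j_1}\cdots c_{j_q}$, obtained by inserting the invariant polynomials \eqref{uchern} into the Chern--Weil type construction for the Hopf cyclic Bott complex of \cite{mr09,mr11}, and then establish that $\Theta^{\GL_n}(z_J)$ is cohomologous in $\bar{C}_{\rm rd}^{\bullet}(\Gb,\Om^\ast(M))$ to $C_J(\hat{\Om}_\nb)$. The mechanism behind this is that $\Theta^{\GL_n}$ turns the algebraic Chern--Weil recipe into the geometric one attached to the flat connection $\nabla$: the functions $\gbar_\Hc(S(\eta^i_{jk\ell_1\ldots\ell_r}))(\phi^{-1})$ occurring in the defining formula \eqref{Theta-rel} are, by the identity \eqref{bargeta2}, exactly the jet coefficients $\g^i_{jk\ell_1\ldots\ell_r}(\phi)$, and these are precisely the quantities that govern the $\Gb$-pull-back $\rho^\ast(\om^i_j)=\om^i_j+\g^i_{jk}(\rho)\,\theta^k$ of the connection form by \eqref{phicon}. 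Thus on a $p$-simplex $\Theta^{\GL_n}$ sends the Weil generators to the homogeneous-coordinate expressions of the forms $\rho_i^\ast(\om_\nb)$ and their exterior derivatives, and the antisymmetrization over $S_{p+1}$ in \eqref{Theta-rel} matches, after Dupont's fibre integration $\oint_{\D^\bullet}$, the barycentric weighting $\sum_i t_i\,\rho_i^\ast(\om_\nb)$ that defines the simplicial connection \eqref{scone} and its curvature \eqref{scurv}; $\GL_n$-invariance of the polynomial $c_J$ then identifies $\Theta^{\GL_n}(z_J)$ with $\oint_{\D^\bullet}c_J(\hat{\Om}_\nb)$ up to an exact term. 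This is exactly the computation carried out in \cite{M-GC} at the level of the frame bundle $FM$, and because $\Theta$ is $\GL_n$-equivariant it restricts verbatim to the $\GL_n$-basic subcomplexes, namely $\bar{C}^{\bullet}(\wg V^\ast,\wg\Fc_\Hc)$ and $\bar{C}_{\rm rd}^{\bullet}(\Gb,\Om^\ast(M))$.

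Granting this comparison, $\Theta^{\GL_n}$ is surjective in cohomology, hence --- by the dimension count above --- an isomorphism in every degree, which is the asserted quasi-isomorphism. The step I expect to be the real work is precisely the matching $\Theta^{\GL_n}(z_J)\sim C_J(\hat{\Om}_\nb)$: choosing $z_J$ so that the three strata of bookkeeping --- the homogeneous group coordinates, the permutation signs in \eqref{Theta-rel}, and the barycentric variables $t_i$ together with the factorial normalizations coming out of $\oint_{\D^\bullet}$ --- align term by term, and keeping track of the coboundary corrections that appear because $\Theta^{\GL_n}$ precomposed with the algebraic Chern--Weil map and $\Dc^{\GL_n}_\nabla$ need only agree in cohomology, not on the nose. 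Everything else --- the two identifications with $P_{2n}[c_1,\ldots,c_n]$, the equality of graded dimensions, and the transfer from $FM$ to $M$ --- is either already available in the excerpt or a routine consequence of reductivity of $\GL_n$. Alternatively, should the non-relative statement that $\Theta\colon\bar{C}^{\bullet}(\wg\Fg_{\rm aff}^\ast,\wg\Fc_\Hc)\to\bar{C}_{\rm rd}^{\bullet}(\Gb,\Om^\ast(FM))$ is a quasi-isomorphism be available from \cite{M-GC}, then Theorem \ref{vErd} follows formally by restricting that $\GL_n$-equivariant morphism to $\GL_n$-basic subcomplexes.
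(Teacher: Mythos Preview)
Your approach is correct but genuinely different from the paper's. The paper does not argue by dimension counting and surjectivity; it simply invokes the proof of \cite[Thm.~3.6]{M-GC} verbatim, replacing the differentiable subcomplexes there by their regular differentiable counterparts and substituting the algebraic horizontal homotopy of \cite[Thm.~1.3]{M-EC} for the one used in \cite[Thm.~1.2]{M-GC}. In other words, the paper establishes the quasi-isomorphism by explicit contracting homotopies on the filtered bicomplex, without first identifying either cohomology. Your route---identify both cohomologies with $P_{2n}[c_1,\ldots,c_n]$ via \cite[Thm.~3.25]{mr09} and \eqref{ChernWeil2}, then exhibit preimages of the Chern cocycles---is legitimate and in fact anticipates what the paper does immediately \emph{after} Theorem~\ref{vErd}: the explicit cocycles $C_J(\hat R|_0)$ satisfy $\Theta^{\GL_n}\bigl(C_J(\hat R|_0)\bigr)=C_J(\hat\Om_\nb)$ on the nose (equation~\eqref{preimH}), so you may take $z_J=C_J(\hat R|_0)$ and no coboundary corrections are needed. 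The paper's homotopy argument has the advantage of being self-contained and not presupposing the two external cohomology computations; yours bypasses redoing the homotopies and makes the Chern cocycle correspondence the centerpiece, at the cost of importing those computations as black boxes. Your closing alternative---restrict an absolute quasi-isomorphism for $\Theta$ to $\GL_n$-basic elements---is close in spirit to what the paper does, but note that passing from an equivariant quasi-isomorphism to one on invariants is not automatic; it does require the reductivity of $\GL_n$, as you remark.
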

 
 \begin{proof}
The proof follows along exactly the same lines as that of \cite[Thm. 3.6]{M-GC},
the only difference being that all the {\em differentiable subcomplexes} are replaced
by their {\em regular differentiable} counterparts. This also entails replacing the horizontal
homotopy used in the proof of \cite[Thm. 1.2]{M-GC} by the algebraic homotopy 
employed in the proof of \cite[Thm. 1.3]{M-EC}.  
 \end{proof}

The preimage by  $\Theta^{\GL_n}$ of the cocycles  $C_J (\hat{\Om}_\nb) $ 
defined by \eqref{basis-forms} can be exactly computed. Indeed, first we observe
that by~\cite[Remark 3.9]{mr11} the map $\Theta$ is insensitive to
affine transformations, \ie if  $\vp_0,\dots,\vp_q \in G_{\rm aff}$ and
 $\psi_0,\dots,\psi_q \in \Nbo$, then
 \begin{align} \label{no-aff}
\begin{split}
\Theta \bigl(\sum_I\a_I&\ot \lu{I}f_0 \wdg \cdots \wdg \lu{I}f_q \bigr)
(\vp_0\psi_0,\dots, \vp_q\psi_q) \, = \, \\
&\Theta \bigl(\sum_I\a_I\ot \lu{I}f_0 \wdg \cdots \wdg \lu{I}f_q \bigr)(\psi_0,\dots,\psi_q) .
\end{split}
\end{align}
Next we note that being given by invariant polynomials, 
 the Chern cocycles \eqref{basis-forms} are built out of the pull-back of the curvature
form by the cross-section $x \in \Rb^n \mapsto (x, {\bf 1})\in \Rb^n \times\GL_n$.
The resulting simplicial matrix-valued form is
\begin{align} \label{R-forms}
\begin{split}
 \hat{R}&(\tb ; \phi_0, \ldots , \phi_p) \, = 
 \sum_{r=0}^p dt_r \wdg  \G (\phi_r)
 - \sum_{r=0}^p t_r\,
\G(\phi_r)\wdg \G (\phi_r)  \\ 
&+ \sum_{r, s=0}^p t_r t_s \, \G (\phi_r)\wdg \G(\phi_s)  ,  \qquad \text{where} \quad
\G (\phi):= ({\phi}^{\prime})^{-1} \cdot d{\phi}^{\prime} ,
\end{split}
 \end{align}
which by restriction to $|\bar\triangle_\Nbo M|$ becomes
\begin{align} \label{R-forms2}
 \hat{R}(\tb ; \psi_0, \ldots , \psi_p)  = 
 \sum_{r=0}^p dt_r \wdg  d \psi_r'
 - \sum_{r=0}^p t_r\,d \psi_r' \wdg  d \psi_r'  
+ \sum_{r, s=0}^p t_r t_s \,  d \psi_r'\wdg  d \psi_s' .
 \end{align}
 For $\psi \in \Nbo$, 
$  (d \psi')^i_j \, = \, \sum_{k=1}^n \p_k \p_j \psi^i \, dx^k$
and so  
$ (d \psi')^i_j \mid_{x=0}\, = \, \sum_{k=1}^n \eta_{j k}^i (\psi)\, dx^k$.
This clearly shows that the restriction of the simplicial Chern form  $c_J (\hat{\Om}_\nb) $
to $|\bar\triangle_\Nbo M|$ evaluated at $x=0$  gives by integration over the simplices 
a cocycle $C_J( \hat{R}|_0)  \in C^{\bullet}_\Fc (\wg M^\ast, \wg\Fc_\Hc) $. Moreover, by
the very construction,
\begin{align} \label{preimH}
\Theta^{\GL_n}\bigl(C_J( \hat{R}|_0)\bigr) \, = \, C_J (\hat{\Om}_\nb) .
\end{align}
Combining Theorem \ref{vErd} with the statement \eqref{basis-forms} we obtain:
 
\begin{corollary}  \label{prebasisH}
The cocycles  $C_J( \hat{R}|_0)  \in \bar{C}^{\bullet}(\wg V^\ast, \wg\Fc_\Hc)^{\Fg\Fl_n} $, 
with $ J= (j_1 \leq \ldots \leq j_q)$, \, $|J| \leq n$, 
represent classes which form a basis for the cohomology
$HP_{\rm CE}^\bullet (\Hc_n, \GL_n : \Cb_\d)$.
\end{corollary}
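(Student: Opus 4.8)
The plan is to obtain the statement as a direct consequence of Theorem \ref{vErd}, combined with the description \eqref{basis-forms} of a basis for the regular differentiable Bott cohomology and with the identity \eqref{preimH}. First I would record that, by Theorem \ref{vErd}, the chain map $\Theta^{\GL_n}\colon \bar{C}^{\bullet}(\wg V^\ast, \wg\Fc_\Hc) \ra \bar{C}_{\rm rd}^{\bullet}(\Gb, \Om^\ast(M))$ is a quasi-isomorphism, hence induces an isomorphism on total cohomology,
\[
HP_{\rm CE}^\bullet(\Hc_n, \GL_n : \Cb_\d) \;\cong\; H_{\rm rd, \Gb}^\bullet(M, \Cb) ,
\]
the left-hand side being, by definition, the total cohomology of the bicomplex \eqref{wedge-coinv-FH} and the right-hand side that of $\{\bar{C}_{\rm rd}^{\rm tot}(\Gb, \Om^\ast(M)), \d \pm d\}$.

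Next I would invoke \eqref{basis-forms}, which asserts that the cocycles $C_J(\hat\Om_\nb)$, indexed by $J = (j_1 \leq \ldots \leq j_q)$ with $|J| \leq n$, represent a linear basis of $H_{\rm rd, \Gb}^\bullet(M, \Cb)$. By the computation culminating in \eqref{preimH}, each such basis cocycle is the $\Theta^{\GL_n}$-image of the cochain $C_J(\hat{R}|_0)$, obtained by restricting the simplicial Chern form $c_J(\hat\Om_\nb)$ to $|\bar\triangle_\Nbo M|$, evaluating at $x = 0$, and integrating over the simplices. Here one must verify that $C_J(\hat{R}|_0)$ indeed lies in the coinvariant subcomplex $\bar{C}^{\bullet}(\wg V^\ast, \wg\Fc_\Hc)$ of \eqref{wedge-coinv-FH}: this is precisely the content of the discussion preceding the corollary, where the replacement of $(d\psi')^i_j$ at $x=0$ by $\sum_k \eta^i_{jk}(\psi)\, dx^k$, together with the $\GL_n$-basic character of the invariant polynomials $c_J$, guarantees that the result is an $\Fc_\Hc$-coinvariant cochain, while the injectivity of the chain map $\Theta^{\GL_n}$ (applied to the fact that $C_J(\hat\Om_\nb)$ is closed) guarantees that $C_J(\hat{R}|_0)$ is itself a cocycle.

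Finally I would conclude by linear algebra: an isomorphism of graded vector spaces carries a basis to a basis, and equivalently the preimage under an isomorphism of a basis is again a basis. Applying this to the isomorphism of Theorem \ref{vErd} and to the basis $\{[C_J(\hat\Om_\nb)]\}_{|J| \leq n}$ of $H_{\rm rd, \Gb}^\bullet(M, \Cb)$ --- whose preimages are exactly the classes $[C_J(\hat{R}|_0)]$ by \eqref{preimH} --- we obtain that $\{[C_J(\hat{R}|_0)]\}_{|J| \leq n}$ is a basis of $HP_{\rm CE}^\bullet(\Hc_n, \GL_n : \Cb_\d)$, which is the assertion. I do not expect any genuine obstacle: all the substantive work is already absorbed into Theorem \ref{vErd} --- whose proof adapts that of \cite[Thm. 3.6]{M-GC}, with the differentiable subcomplexes replaced by their regular differentiable counterparts and the horizontal homotopy replaced by the algebraic homotopy of \cite[Thm. 1.3]{M-EC} --- and into the identification \eqref{basis-forms} of the regular differentiable cohomology with $P_{2n}[c_1, \ldots, c_n]$. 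The only point that really requires care is the bookkeeping, carried out in \eqref{no-aff}--\eqref{preimH}, that exhibits $C_J(\hat{R}|_0)$ as an $\Fc_\Hc$-coinvariant cochain which $\Theta^{\GL_n}$ maps onto $C_J(\hat\Om_\nb)$.
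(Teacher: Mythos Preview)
Your proposal is correct and follows exactly the same approach as the paper, which simply states that the corollary is obtained by combining Theorem \ref{vErd} with the statement \eqref{basis-forms}. Your write-up merely unpacks this one-line justification with additional bookkeeping about coinvariance and cocycle conditions, all of which is handled in the discussion leading to \eqref{preimH}.
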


\medskip

To reproduce the same approach for the algebra $\Kc_n$, we replace the map $\Theta$ by
its counterpart corresponding to the decomposition
$\Gb= \Tbo\cdot \Gb^\dagger$. The new chain map $\Theta_\Kc$, from the subcomplex
 $\bar{C}^{\bullet} (\wg V^\ast, \wg\Fc_\Kc)$ of 
 $\Fc_\Kc$-coinvariant cochains in $C^{\bullet}(\wg {V}^\ast, \wg\Fc_\Kc)$
 to $\bar{C}^{\bullet}(\Gb, \Om^*(FM))$, is defined by the similar formula
\begin{align} \label{ThetaK}
\begin{split}
\Theta_\Kc&\big(\sum_{I} \a_I\ot \lu{I}f^0 \wg \cdots
 \wg \lu{I}f^p \big)(\phi_0, \dots ,\phi_p)=\\
&\sum_{I} \sum_{\s \in S_{p+1}} 
(-1)^\s   \gbar_\Kc(S( \lu{I}f^{\s(0)}))(\phi_0^{-1})\dots \gbar_\Kc(S(\lu{I}f^{\s(p)}))(\phi_p^{-1}){{\td\a_I}} .  
\end{split}
\end{align}
The analogous property to \eqref{no-aff} reads as follows:
 if  $\vp_0,\dots,\vp_q \in \Tbo$ and
 $\psi_0,\dots,\psi_q \in \Gb^\dagger$, then
 \begin{align} \label{no-trans}
\begin{split}
\Theta_\Kc \bigl(\sum_I\a_I&\ot \lu{I}f_0 \wdg \cdots \wdg \lu{I}f_q \bigr)
(\vp_0\psi_0,\dots, \vp_q\psi_q) \, = \, \\
&\Theta_\Kc \bigl(\sum_I\a_I\ot \lu{I}f_0 \wdg \cdots \wdg \lu{I}f_q \bigr)(\psi_0,\dots,\psi_q) .
\end{split}
\end{align}
This follows from the simple fact that for any translation $\vp(x) = x + b$, one has
$\s^i_j (U_\vp^*) = \d^i_j U_\vp^*$.

The dual to the projection map $\psi \in \Gb^\dagger \mapsto \nu_\psi \in \Nbo$ 
gives an inclusion $\iota_\Hc : \Fc_\Hc \ra \Fc_\Kc$, which is defined by 
\[ \i_\Hc(f)=\Phi^{-1}(1\ot f),\]
where $\Phi^{-1}$ is defined in \eqref{PHI}.
One observes that $\i_\Hc$ is a cross-section of

the restriction map $r_\Hc:\Fc_\Kc \ra \Fc_\Hc$. In turn, $\iota_\Hc$
gives rise to a chain map $\iota_\Hc^\bullet : \bar{C}^{\bullet} (\wg V^\ast, \wg\Fc_\Hc)
\ra \bar{C}^{\bullet} (\wg V^\ast, \wg\Fc_\Kc)$ at the level of Hochschild complexes. Manifestly, one has
\begin{align} \label{Theta-comp}
 \Theta_\Kc \circ  \i_\Hc^{\bullet}\, = \, \Theta^{\GL_n}.
\end{align}
 
 \begin{lemma} \label{quas-iota}
 The chain map $\iota_\Hc^\bullet : \bar{C}^{\bullet} (\wg V^\ast, \wg\Fc_\Hc)^{\Fg\Fl_n}
\ra \bar{C}^{\bullet} (\wg V^\ast, \wg\Fc_\Kc)$ is a quasi-isomorphism of
bicomplexes.
 \end{lemma}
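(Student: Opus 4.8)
The plan is to deduce the lemma formally from two inputs already at our disposal: first, that $\iota_\Hc$ is a section of the restriction map $r_\Hc:\Fc_\Kc\ra\Fc_\Hc$; second, that $r_\Hc$ induces a quasi-isomorphism between the bicomplexes \eqref{wedge-coinv-FK} and \eqref{wedge-coinv-FH}. The preliminary point to dispatch is that $\iota_\Hc^\bullet$ is a bona fide morphism of the anti-symmetrized coinvariant bicomplexes, i.e.\ that applying $\iota_\Hc=\Phi^{-1}(1\ot-)$ tensor-factor by tensor-factor to $\wg^{q+1}\Fc_\Hc$ carries the $\Fc_\Hc$-coinvariance condition \eqref{coinv-condition} to its $\Fc_\Kc$-analogue and commutes with $b_\wg$ and $\p_\wg$. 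This is a routine verification: by \eqref{PHI} the subspace $\iota_\Hc(\Fc_\Hc)=\Phi^{-1}(1\ot\Fc_2)$ is exactly the $\Fc(\Gb^\dagger_0)$-coinvariant part of $\Fc_\Kc$ for the coaction \eqref{coaction-F-1-F-2}, while the left $\Fc_\Kc$-coaction on $\wg^\bullet V^\ast$ entering \eqref{coinv-condition} factors through $\Fc(\Gb^\dagger_0)=\Pc(\GL_n)$ by \eqref{coaction-g*-F}; compatibility with the two coboundaries is immediate from $\iota_\Hc$ being a morphism of coalgebras.

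Granting this, the argument is a retraction argument. Since $r_\Hc\circ\iota_\Hc=\id_{\Fc_\Hc}$, the induced chain maps satisfy $r_\Hc^\bullet\circ\iota_\Hc^\bullet=\id$ on the nose on $\bar{C}^\bullet(\wg V^\ast,\wg\Fc_\Hc)$; moreover $\iota_\Hc^\bullet$ preserves both the horizontal and the vertical grading, hence the two filtrations, so the identity $r_\Hc^\bullet\circ\iota_\Hc^\bullet=\id$ persists on every page of the associated spectral sequences, in particular on total cohomology. Passing to cohomology yields $H(r_\Hc^\bullet)\circ H(\iota_\Hc^\bullet)=\id$ on $HP_{\rm CE}^\bullet(\Hc_n,\GL_n:\Cb_\d)$; since $H(r_\Hc^\bullet)$ is an isomorphism by hypothesis, we conclude $H(\iota_\Hc^\bullet)=H(r_\Hc^\bullet)^{-1}$, so $\iota_\Hc^\bullet$ is a quasi-isomorphism of bicomplexes (with $r_\Hc^\bullet$ as a chain-level left inverse realizing the inverse in cohomology).

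The genuine content therefore resides in the quasi-isomorphism property of $r_\Hc^\bullet$, and I expect this to be the only real difficulty; should one wish to establish it here rather than quote it, the argument is the anti-symmetrized coinvariant counterpart of the reduction already carried out in Proposition \ref{Hoch} and the proof of Theorem \ref{theorem-chern}. Using the bicrossed factorization $\Fc_\Kc\cong\Fc(\Gb^\dagger_0)\cl\Fc(\Nbo)$ with $\Fc(\Gb^\dagger_0)=\Pc(\GL_n)$ the coordinate ring of a reductive group, one filters \eqref{wedge-coinv-FK} by the $\Fc(\Gb^\dagger_0)$-degree and applies \cite[Theorem 4.8]{RS} (vanishing of $\Pc(\GL_n)$-comodule Hochschild cohomology in positive degrees), so that the spectral sequence collapses onto the part carried by $\Fc(\Nbo)=\Fc_\Hc$ and $\Fg\Fl_n$-invariant on $\wg^\bullet V^\ast$; identifying this with \eqref{wedge-coinv-FH} and checking that $r_\Hc^\bullet$ realizes the collapse, in the spirit of the retraction manipulations of \cite{mr11}, is the main, essentially bookkeeping, task. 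The single point demanding care is that passing to $\Fc(\Gb^\dagger_0)$-coinvariants in the anti-symmetrized complex produces honest $\GL_n$-invariants rather than merely $\Fg\Fl_n$-invariants, which is unproblematic here because the relevant structure group is connected.
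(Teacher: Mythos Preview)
Your proposal is correct and follows essentially the same route as the paper: the paper's proof consists of precisely the two inputs you identify, namely $r_\Hc^\bullet\circ\iota_\Hc^\bullet=\Id$ by construction, and $r_\Hc^\bullet$ being a quasi-isomorphism (the paper attributes this to the arguments in the proof of Theorem~\ref{thm-alt-chern}, whereas you quote the assertion already recorded in the text after \eqref{wedge-coinv-FH}), from which the conclusion is immediate. Your additional paragraphs fleshing out why $\iota_\Hc^\bullet$ respects the bicomplex structure and how one would re-derive the quasi-isomorphism property of $r_\Hc^\bullet$ via the $\Fc(\Gb^\dagger_0)$-filtration and \cite[Theorem~4.8]{RS} are sound elaborations of what the paper leaves implicit, not a different strategy.
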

 
 \proof By construction, $r_\Hc^\bullet \circ \iota_\Hc^\bullet = \Id$. On the
other hand the very same arguments
invoked in the proof of Theorem \ref{thm-alt-chern} show that $r_\Hc^\bullet $
is a quasi-isomorphism in Hochschild cohomology, which moreover
induces an isomorphism in cyclic cohomology. Therefore, its right inverse
$\iota_\Hc^\bullet$, which is a chain map of bicomplexes, gives an
isomorphism in the cohomology of the total complexes.
\endproof

We next build the preimage  by  $\Theta_\Kc$ of the cocycles  $C_J (\hat{\Om}_\nb) $ 
in exactly the same fashion as for $\Theta^{\GL_n}$, except that instead of using the
simplicial curvature \eqref{R-forms2} on $|\D_\Nbo M|$, we use the simplicial
curvature form  \eqref{R-forms} on $|\D_{\Gb^\dagger} M|$. Note that the latter involves
the forms $\G (\psi):= ({\psi}^{\prime})^{-1} \cdot d{\psi}^{\prime}$, for $\psi \in \Gb^\dagger$.
The cocycles thus obtained, $C^\dagger_J( \hat{R}|_0)$ are uniquely determined by
the equation
\begin{align} \label{preimK}
\Theta_\Kc\bigl(C^\dagger_J( \hat{R}|_0)\bigr) \, = \, C_J (\hat{\Om}_\nb) .
\end{align}

\begin{corollary}  \label{prebasisK}
The cocycles  $C^\dagger_J( \hat{R}|_0)  \in \bar{C}^{\bullet}(\wg V^\ast, \wg\Fc_\Kc) $, with
$ J= (j_1 \leq \ldots \leq j_q)$, \, $|J| \leq n$, 
represent a basis of cohomology classes for
$HP_{\rm CE}^\bullet (\Kc_n ; \, ^{\s^{-1}}\Cb)$.
\end{corollary}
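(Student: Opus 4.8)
The plan is to deduce the statement from its $\Hc_n$-counterpart, Corollary~\ref{prebasisH}, by transporting cohomology classes along the two quasi-isomorphisms already in hand. The first step is to observe that $\Theta_\Kc$ is itself a quasi-isomorphism: rather than re-running the proof of Theorem~\ref{vErd} for $\Kc_n$, I would apply the two-out-of-three property to the factorization \eqref{Theta-comp}, $\Theta_\Kc\circ\iota_\Hc^\bullet=\Theta^{\GL_n}$, in which $\iota_\Hc^\bullet$ is a quasi-isomorphism by Lemma~\ref{quas-iota} and $\Theta^{\GL_n}$ is one by Theorem~\ref{vErd}. The passage from Hochschild cohomology to the total (periodic Hopf cyclic) cohomology is then handled exactly as in the proof of Theorem~\ref{thm-alt-chern}: the Connes boundary $B$ vanishes on both bicomplexes \eqref{wedge-coinv-FK} and \eqref{wedge-coinv-FH}, and the Hochschild cohomology is concentrated in a single degree, so $\Theta_\Kc$ induces an isomorphism $HP_{\rm CE}^\bullet(\Kc_n;\,{}^{\s^{-1}}\Cb)\cong H_{\rm rd,\Gb}^\bullet(M,\Cb)$ at the level of the $\GL_n$-coinvariant subcomplexes.

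With this in place the argument is purely formal. By \eqref{preimK} the map $\Theta_\Kc$ sends the cochain $C^\dagger_J(\hat R|_0)$ to the cocycle $C_J(\hat\Om_\nb)$, and by \eqref{basis-forms} the classes $[C_J(\hat\Om_\nb)]$ with $|J|\le n$ form a basis of the target $H_{\rm rd,\Gb}^\bullet(M,\Cb)$; since an isomorphism both carries bases to bases and reflects them, it follows at once that $\{[C^\dagger_J(\hat R|_0)]:|J|\le n\}$ is a basis of $HP_{\rm CE}^\bullet(\Kc_n;\,{}^{\s^{-1}}\Cb)$, which is the assertion. Equivalently, one may avoid naming the target and argue that $\iota_\Hc^\bullet$ already carries the basis $\{[C_J(\hat R|_0)]\}$ of $HP_{\rm CE}^\bullet(\Hc_n,\GL_n:\Cb_\d)$ supplied by Corollary~\ref{prebasisH} to a basis of $HP_{\rm CE}^\bullet(\Kc_n;\,{}^{\s^{-1}}\Cb)$, and that $[\iota_\Hc^\bullet(C_J(\hat R|_0))]=[C^\dagger_J(\hat R|_0)]$ because, by \eqref{Theta-comp} and \eqref{preimH}, both classes are mapped by the (cohomology-)injective $\Theta_\Kc$ to $[C_J(\hat\Om_\nb)]$; in fact, just as for $\Theta$, one checks that $\Theta_\Kc$ is injective already at the cochain level, so these two representatives literally coincide.

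The only genuinely non-formal content, and hence where I expect the real (though routine) effort to go, is in justifying the constructions of \S\ref{S4} underlying \eqref{ThetaK} and \eqref{preimK}. First, one must check that $\Theta_\Kc$, defined by the formula \eqref{ThetaK}, is a well-defined chain map taking values in the regular differentiable complex; this is proved verbatim as the corresponding statement for $\Theta$, with the roles of $G_{\rm aff}$ and $\Nbo$ in the decomposition taken over by $\Tbo$ and $\Gb^\dagger$, and the translation-invariance \eqref{no-trans} used in place of \eqref{no-aff}. Second, one must verify that restricting the simplicial curvature form $\hat R$ of \eqref{R-forms} to $|\bar\triangle_{\Gb^\dagger}M|$, evaluating at $x=0$, and integrating over the simplices actually produces cocycles $C^\dagger_J(\hat R|_0)$ lying in $\bar C^\bullet(\wg V^\ast,\wg\Fc_\Kc)$ and satisfying the coinvariance condition \eqref{coinv-condition}. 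This reduces to the observation that, for $\psi\in\Gb^\dagger$, the jet components at $0$ of $\G(\psi)=(\psi')^{-1}\cdot d\psi'$ are polynomial expressions in the generators $\s^i_{j_1,\dots,j_k}$ and $\s^{-1}$ of $\Fc_\Kc$ --- precisely what the relations recorded in \S\ref{S1} encode --- so that the Chern--Weil bookkeeping carries over unchanged from the $\Hc_n$ case treated in Corollary~\ref{prebasisH}.
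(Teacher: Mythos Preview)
Your proposal is correct, and the alternative argument you sketch in the second paragraph is precisely the paper's (two-line) proof: use the injectivity of $\Theta_\Kc$ together with \eqref{Theta-comp}, \eqref{preimH}, and \eqref{preimK} to conclude that $\iota_\Hc^\bullet(C_J(\hat R|_0)) = C^\dagger_J(\hat R|_0)$ at the cochain level, and then invoke Lemma~\ref{quas-iota} and Corollary~\ref{prebasisH}. Your first route, deducing that $\Theta_\Kc$ itself is a quasi-isomorphism via two-out-of-three, is a valid variant, though the paper does not need to (and does not) assert this.
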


\proof The relation \eqref{Theta-comp} implies that $\iota_\Hc (C_J( \hat{R}|_0)) 
= C^\dagger_J( \hat{R}|_0)$, since $\Theta_\Kc$ is injective. The claim then 
follows from Lemma \ref{quas-iota}.
\endproof

\section{Characteristic map and Hopf cyclic Chern cocycles} \label{S5}

The crossed product algebra $\Ac = C_c^\infty (M) \rtimes \Gb$ has a canonical state-like
functional $\tau:\Ac\ra \Cb$, determined by the standard volume form on $M=\Rb^n$,
$\varpi = dx^1 \wg \ldots \wg dx^n$; it is given by
\begin{align}
\begin{split}
&\tau(fU^\ast_\phi)= \left\{\begin{matrix}\displaystyle   \int_{{M}} f\varpi,& \text{if} \quad \phi=\Id \\ &&\\
0, &\text{otherwise}\
\end{matrix}\right.
\end{split}
\end{align} 
Unlike its forerunner on the frame bundle employed in~\cite{cm2, cm3}, the 
linear map $\tau$ is not a trace. It is however easy to check that $\tau$ is a
$\s^{-1}$-trace, \ie
\begin{align}\label{sigma-trace}
\tau(ab)=\tau(b\s^{-1}(a)) , \qquad \forall \, a, b \in \Ac ,
\end{align}
and that it is $\ve $-invariant with respect to the action of $\Kc_n$, meaning that
 \begin{align} \label{delta-invariant}
\tau(k(a))=\ve(k)\tau(a) , \qquad \forall \, k \in \Kc_n, \, a \in \Ac ;
\end{align}
in particular, $\tau(\s(a))=\tau(a)$.

Having these two properties, one can define (\cf \cite{cm2, cm4}) 
a characteristic map $\chi_\tau$ from the standard Hopf cyclic $(b, B)$-complex
$CC^\bullet(\Kc_n; \,^{\s^{-1}}\Cb)$ to the cyclic cohomology $(b, B)$-complex
$ CC^\bullet(\Ac)$, by
\begin{align} \label{char-map}
 & \chi_\tau (k^1, \ldots,k^q)(a_0\ldots,a_q )=\tau(a_0k^1(a^1)\cdots k^q(a_q)), \\
 &\qquad k^1, \ldots,k^q \in \Kc_n , \qquad  a_0, \ldots, a_q \in \Ac .
\end{align}
which is a map of cyclic complexes. As a matter of fact, this map is injective
and the model for the Hopf cyclic structure in the left hand side was originally 
imported in~\cite{cm2} from that of the right hand side. We will show below 
that this structural characteristic map also allows to transfer the geometric
cocycles constructed in \S \ref{S4} to the Hopf cyclic 
complex $CC^\bullet(\Kc_n ; \,^{\s^{-1}}\Cb)$.
\medskip

Connes has constructed  (see~\cite[III.2.$\d$]{NCGbook}) a map of
bicomplexes
\begin{align*}
\Phi_{\rm C}:  \bar{C}^{\bullet}(\G, \Om^\bullet(M)) \ra CC^\bullet (C_c^\infty (M) \rtimes \Gb) ,
\end{align*}
whose definition we quickly recall.

Let ${\Bc_\Gb} (M)$ denote the DG-algebra 
 $\Om^*_c(M) \ot \wg \, \Cb [\Gb']$, where $\Gb' = \Gb \setminus \{e\}$
 with the differential $d \ot \Id$.
After labeling the generators of $\Cb [\Gb']$ as
$\g_{\phi}$, $\phi \in \Gb$, with $\g_1 = 0$,  one forms the crossed product
$\, \Cc_\Gb (M)  = {\Bc_\Gb}(M) \rtimes \Gb$, 
with the multiplication rules
\begin{align*}
&U_{\phi}^\ast \, \om \, U_{\phi} = \phi^\ast \, \om  , &\qquad
\, \om \in \Om^*_c(M),\\
& U_{\phi_1}^\ast \, \g_{\phi_2} \, U_{\phi_1} =\g_{\phi_2 \circ \phi_1} -
\g_{\phi_1} , &\qquad  \phi_1 , \phi_2 \in \Gb \, .
\end{align*}
$\Cc_\Gb (M) $ is itself a DG-algebra, equipped with the differential  
\begin{equation} \label{dbig}
d (b \, U_{\phi}^\ast) = db \, U_{\phi}^\ast - (-1)^{\p b} \, b \, \g_{\phi} \,
U_{\phi}^\ast  , \qquad b \in {\Bc_\Gb} (G) , \quad \phi \in \Gb,
\end{equation}
Any $\lambda \in \bar{C}^{q}(\Gb, \Om^p(M))$ gives rise a linear form
$\wt{\lambda}$ on $\Cc_\Gb (G) $ as follows: 
\begin{align}   \label{prePhi}
\begin{split}
&\wt{\lambda} (b \,U_{\phi}^\ast) = 0 \qquad  \text{for} \quad  \phi \ne 1 ; \\
& \text{if}  \quad \phi = 1 \quad  \text{and} \quad
 b=\om \ot \g_{\rho_1} \ldots \g_{\rho_q} \qquad  \text{then} \\
&\wt{\lambda}(\om \ot \g_{\rho_1} \ldots \g_{\rho_q}) = \int_{M}
 \lambda(1, \rho_1 , \ldots ,\rho_q) \wg \om .
   \end{split}
\end{align}
The map $\Phi_{\rm C}$ from $ \bar{C}^{\bullet}(\Gb, \Om^\bullet(G))$
to the $(b, B)$-complex of the algebra $\Ac = C_c^\ify (M)  \rtimes \Gb$ 
is now defined for  $\lambda \in \bar{C}^{q}(\Gb, \Om^p(M))$ by
\begin{align}   \label{mapPhi}
\begin{split}
\Phi_{\rm C}(\lambda)(a^0, \ldots, a^m)&=
\frac{p!}{(m+1)!}
\sum_{j=0}^m (-1)^{j(m-j)}\wt{\lambda}(da^{j+1}\cdots da^m\; a^0\; 
da^1\cdots da^j) \\
   \text{where} \quad m &=\dim G-p+q  , \qquad a^0, \ldots, a^m \in \Ac .
   \end{split}
\end{align}
As proved in~\cite[III.2, Thm. 14]{NCGbook}, $\Phi_{\rm C}$ is a chain map to the total
 $(b, B)$-complex of the algebra $\Ac$.

We denote by $\Phi_{\rm rd}$ the restriction of $\Phi_{\rm C}$ to the subcomplex 
\begin{align*} 
\bar{C}_{\Theta}^{\rm tot}(\Gb, \Om^*(M)) :=
\Theta_\Kc \big(\bar{C}^{\bullet} (\wg V^\ast, \wg\Fc_\Kc)\big) 
\subset \bar{C}_{\rm rd}^{\rm tot}(\Gb, \Om^*(M)) .
\end{align*}
By reasoning as in~\cite[pp 223-234]{cm2},  it can be shown that
 if $\lambda \in \bar{C}_{\Theta}^{q}(\Gb, \Om^p(M))$  then there exists
 $\td{k}(\lambda) = \sum_\a  \dot k_\a^1 \ot \ldots \ot \dot k_\a^q \in \Kc_n^{\ot \, q}$
 such that
\begin{align*} 
 \Phi_{\rm C} (\lambda) \, = \, \sum_\a \chi_\tau (k_\a^1, \ldots, k_\a^q) ;
 \end{align*}
due to the faithfulness of $\chi_\tau$, the element $\td{k}(\lambda)$
 is necessarily unique. This gives a canonical identification between the two
 $(b, B)$-complexes, 
 \begin{align} \label{2bB}
 CC^\bullet(\Kc_n; \,^{\s^{-1}}\Cb) \cong  \Im (\Phi_{\rm rd}) ,
 \end{align}
 which allows us to regard  $\Phi_{\rm rd}$ as a chain map to $ CC^{\rm tot} (\Kc_n\,^{\s^{-1}}\Cb)$.
  
  \begin{theorem} \label{chernHP}
 The map $\Phi_{\rm rd}: \bar{C}_{\Theta}^{\rm tot}(\Gb, \Om^*(M)) \ra 
 CC^{\rm tot} (\Kc_n ;\,^{\s^{-1}}\Cb)$ is a quasi-isomorphism. Moreover, via
 the above identification, the cocycles 
\begin{align} \label{basis2}
\kappa_J (\hat{\Om}_\nb) := \Phi_{\rm rd}\bigl(C_J (\hat{\Om}_\nb)\bigr) \in 
 CC^{\rm tot} (\Kc_n\,^{\s^{-1}}\Cb) ,
\end{align}
 with $ J= (j_1 \leq \ldots \leq j_q)$, \, $|J| \leq n$, 
represent a basis of cohomology classes for $HP^\bullet (\Kc_n; \,^{\s^{-1}}\Cb)$.
 \end{theorem}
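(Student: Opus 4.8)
The plan is to factor $\Phi_{\rm rd}$ through the chain map $\Theta_\Kc$ of \S\ref{S4} and to recognize the composite as a concrete realization of the comparison quasi-isomorphism already used in \S\ref{S3} to compute $HP^\bullet(\Kc_n;\,^{\s^{-1}}\Cb)$. First I would record that $\Theta_\Kc$, being injective (for the same reason that $\Theta$ is), restricts to an \emph{isomorphism} of total complexes $\Theta_\Kc:\bar C^{\bullet}(\wg V^\ast,\wg\Fc_\Kc)\xrightarrow{\ \cong\ }\bar C_\Theta^{\rm tot}(\Gb,\Om^*(M))$, since the latter is by definition its image; in particular the cohomology of $\bar C_\Theta^{\rm tot}(\Gb,\Om^*(M))$ is $HP_{\rm CE}^\bullet(\Kc_n;\,^{\s^{-1}}\Cb)$. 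Hence it suffices to prove that $\Phi_{\rm rd}\circ\Theta_\Kc:\bar C^{\bullet}(\wg V^\ast,\wg\Fc_\Kc)\to CC^{\rm tot}(\Kc_n;\,^{\s^{-1}}\Cb)$ is a quasi-isomorphism.

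Next I would identify $\Phi_{\rm rd}\circ\Theta_\Kc$ explicitly. Unwinding the definitions of $\chi_\tau$, of Connes' map $\Phi_{\rm C}$, and of $\Theta_\Kc$, and using that $\tau$ is a $\s^{-1}$-trace which is $\ve$-invariant under $\Kc_n$, one checks --- by the same reasoning as in \cite[pp.~223--234]{cm2} that sets up the identification \eqref{2bB}, but carried out for $\Kc_n$ and the decomposition $\Gb=\Tbo\cdot\Gb^\dagger$ in place of the frame-bundle prolongation --- that under \eqref{2bB} the map $\Phi_{\rm rd}\circ\Theta_\Kc$ coincides with the composite of the inclusion of the antisymmetrized coinvariant sub-bicomplex \eqref{wedge-coinv-FK} into \eqref{bicomp-V*} with the comparison morphism between \eqref{bicomp-V*} and the standard Hopf cyclic $(b,B)$-bicomplex of $\Kc_n$. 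Granting this, $\Phi_{\rm rd}\circ\Theta_\Kc$ is a quasi-isomorphism: the first inclusion is a homotopy retraction by \cite{mr11}, and \eqref{bicomp-V*} is quasi-isomorphic to the standard Hopf cyclic bicomplex of $\Kc_n$ as recalled in \S\ref{S3}. By Step 1 this gives the first assertion of the theorem.

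For the ``moreover'' I would transport the geometric basis through these identifications. By Corollary \ref{prebasisK} the classes of the cocycles $C^\dagger_J(\hat R|_0)$, $J=(j_1\le\cdots\le j_q)$, $|J|\le n$, form a basis of $HP_{\rm CE}^\bullet(\Kc_n;\,^{\s^{-1}}\Cb)$; by \eqref{preimK} we have $\Theta_\Kc\bigl(C^\dagger_J(\hat R|_0)\bigr)=C_J(\hat\Om_\nb)$, so via the isomorphism of Step 1 the classes of $C_J(\hat\Om_\nb)$ form a basis of $H^\bullet\bigl(\bar C_\Theta^{\rm tot}(\Gb,\Om^*(M))\bigr)$; applying the quasi-isomorphism $\Phi_{\rm rd}$ then yields that the classes of $\kappa_J(\hat\Om_\nb)=\Phi_{\rm rd}\bigl(C_J(\hat\Om_\nb)\bigr)$, $|J|\le n$, form a basis of $HP^\bullet(\Kc_n;\,^{\s^{-1}}\Cb)$, consistent with the count $P_{2n}[c_1,\ldots,c_n]$ furnished by Theorem \ref{theorem-chern}.

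The main obstacle is Step 2: checking that $\Phi_{\rm rd}\circ\Theta_\Kc$ really is the claimed comparison morphism. This is where the combinatorics of $\chi_\tau$, the exchange relations between homogeneous and inhomogeneous simplicial cochains, the fiber integration $\oint_{\D^\bullet}$, and the bicrossed-product structure of $\Kc_n$ all have to be matched up; it is the analogue of the key lemmas of \cite{cm2} and \cite{M-GC}, now run through for the translation decomposition, using for instance $\s^i_j(U_\vp^\ast)=\d^i_j U_\vp^\ast$ for translations $\vp$ (as in \eqref{no-trans}) to control the $\Tbo$-factor and to keep $\Theta_\Kc$ inside the regular-differentiable complex.
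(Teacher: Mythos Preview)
Your outline is sound and your Step~1 and Step~3 match the paper, but your Step~2 takes a genuinely different route from the paper's own argument, and is the harder of the two.

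The paper does not attempt to identify $\Phi_{\rm rd}\circ\Theta_\Kc$ directly with the intrinsic comparison morphism from \eqref{wedge-coinv-FK} to the standard Hopf cyclic bicomplex. Instead it uses the factorization \eqref{Theta-comp}, namely $\Theta_\Kc\circ\iota_\Hc^\bullet=\Theta^{\GL_n}$, to write
\[
\Phi_{\rm rd}\circ\Theta_\Kc\circ\iota_\Hc^\bullet \;=\; \Phi_{\rm rd}\circ\Theta^{\GL_n}.
\]
The right-hand side is already known to be a quasi-isomorphism by \cite[\S 2.2]{M-EC}, and $\iota_\Hc^\bullet$ is a quasi-isomorphism by Lemma~\ref{quas-iota}; hence $\Phi_{\rm rd}\circ\Theta_\Kc$ is a quasi-isomorphism, and via your Step~1 so is $\Phi_{\rm rd}$. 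The ``moreover'' then follows exactly as you say, via Corollary~\ref{prebasisK} and \eqref{preimK}.

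What each approach buys: the paper's argument is essentially a one-line reduction to the $(\Hc_n,\GL_n)$ case already handled in \cite{M-EC}, so all the combinatorial matching of $\chi_\tau$, $\Phi_{\rm C}$, fiber integration, and the bicrossed-product structure is quoted rather than redone. Your approach is self-contained at the $\Kc_n$ level and would in principle give an explicit formula for $\Phi_{\rm rd}\circ\Theta_\Kc$ as the comparison map, but the verification you flag as ``the main obstacle'' is exactly the substantial computation carried out in \cite{cm2} and \cite{M-GC, M-EC}; you would be reproving that work for the translation decomposition rather than leveraging it through $\iota_\Hc^\bullet$. There is no gap in your plan, but the paper's shortcut via \eqref{Theta-comp} and Lemma~\ref{quas-iota} is considerably more economical.
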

 
 \proof  By construction, 
\begin{align*}
 \Phi_{\rm rd} \circ \Theta_\Kc \circ  \i_\Hc^{\bullet} = \Phi_{\rm rd} \circ \Theta^{\GL_n} .
 \end{align*}
 The right hand side was shown to be a quasi-isomorphism in \cite[\S 2.2]{M-EC},
 while $ \i_\Hc^{\bullet}$ is quasi-isomorphism by Lemma \ref{quas-iota}. 
\endproof

\section{Explicit calculations for $n=1$}

We illustrate the above results, by producing completely explicit cocycles for the 
Hopf cyclic classes of  $\Kc_1$ and $(\Hc_1, \GL_1)$.

The connection form on $FM = \Rb \times \Rb^{\times}$ being 
$\, \om \equiv {\om}^1_1 \, :=  \, {\bf y}^{-1}\, d{\bf y}$, the
associated simplicial connection form is
 \begin{align*} 
 \hat\om_p (\tb ; \rho_0, \ldots , \rho_p) : = \sum_{i=0}^p t_i \rho_i^* (\om) =
 \sum_{i=1}^p s_i (\rho_{i-1}^* (\om) - \rho_i^* (\om)) + \rho_p^* (\om) .
\end{align*}
The pull-back of the connection form is 
\begin{align*}  
\rho^* (\om^1_1)&=\om^1_1 \, +\, \g_{11}^1 (\rho)  \,{\bf y}^{-1}\cdot dx  , \\ \notag
\g^1_{1\, 1} (\rho) (x, {\bf y})&= {\bf y}^{-1} \cdot
{\rho}^{\prime} (x)^{-1} \cdot \part {\rho}^{\prime} (x) \cdot {\bf y} \, {\bf y}  ,
\end{align*}
that is
\begin{align*}  
\rho^* (\om)\, =\,  {\bf y}^{-1} d{\bf y} \, + \, \frac{\rho ''}{\rho'}  dx  . 
\end{align*}
One has
 \begin{align*} 
 \hat\om_p (\tb ; \rho_0, \ldots , \rho_p) =
 \sum_{i=1}^p s_i \left(\frac{\rho''_{i-1}}{\rho'_{i-1}}  - 
 \frac{\rho''_{i}}{\rho'_{i}}\right) dx+ \frac{\rho''_p}{\rho'_p} dx +  {\bf y}^{-1} d{\bf y} .
\end{align*}
 The simplicial curvature form
 $\hat{\Om} = d \hat{\om}_\nb + \hat{\om}_\nb \wg \hat{\om}_\nb$
has components  
\begin{align*}  
\hat{\Om}_p (\tb ; \rho_0, \ldots , \rho_p)& =  \sum_{i=1}^p \left(\frac{\rho''_{i-1}}{\rho'_{i-1}}  - 
 \frac{\rho''_{i}}{\rho'_{i}} \right) ds_i dx  \\
&+\left( \sum_{i=1}^p s_i \left(\frac{\rho''_{i-1}}{\rho'_{i-1}}  - 
 \frac{\rho''_{i}}{\rho'_{i}}\right) +  \frac{\rho''_p}{\rho'_p}\right) {\bf y}^{-1} dx d{\bf y} .
 \end{align*} 
Its pull-back by the canonical section is  the curvature form
 \begin{align*}  
\hat{R}_p (\tb ; \rho_0, \ldots , \rho_p) =  \sum_{i=1}^p \left(\frac{\rho''_{i-1}}{\rho'_{i-1}}  - 
 \frac{\rho''_{i}}{\rho'_{i}} \right) ds_i dx  .
 \end{align*} 
 
The image $ \{ \oint_{\D_p} \hat{R}_p \}_p$ in the Bott complex is nontrivial only 
for  $p=1$, giving the cochain $ c_1 \in \bar{C}^1 (\Gb, \Om^1(M))$,
\begin{align}  \label{c1B}
c_1 (\rho_0, \rho_1) = \left(\frac{\rho''_0}{\rho'_0 } - \frac{\rho''_1}{\rho'_1}\right) dx
 \end{align} 
 Let us compute $\Phi_{\rm C} (c_1)$. Recall that $\td c_1$ is the current
 \begin{align}  
\td c_1 (\a \ot \g_\vp) = \int_M c_1(1, \vp) \wg \a , \quad a \in \Om_c^q (M) ,
 \end{align} 
which only pairs nontrivially if $\a \in \Om_c^0 (M)$. Then
 \begin{align}  \label{1c1}
\Phi_{\rm C}(c_1) (a_0, a_1) = \frac{1}{2}\td c_1 (da_1\, a_0 + a_0\, da_1) , \quad a_0, a_1 \in \Ac .
 \end{align} 
  Take $a_0 = f_0 U^*_{\rho_0}$ and $a_ 1= f_1 U^*_{\rho_1}$ with ${\rho_1} {\rho_0} = 1$.
Then $a:=a_0 \, a_1 =  f_0 \, \rho^*(f_1) := f$, hence 
 \begin{align}  \label{stokes}
 \td c_1 (da)\, = \, \td c_1 (df) \, = \,0.
 \end{align} 
So we can rewrite \eqref{1c1} as
 \begin{align}  \label{2c1}
\Phi_{\rm C}(c_1) (a_0, a_1) = \frac{1}{2}\td c_1 (da_1\, a_0 - da_0\,a_1) , \quad a_0, a_1 \in \Ac .
 \end{align} 
One has
 \begin{align*}  
 da_1\, a_0 &- da_0\, a_1= (df_1U^*_{\rho_1} - f_1 \g_{\rho_1}  U^*_{\rho_1}) f_0 U^*_{\rho_0}
 - (df_0U^*_{\rho_0} - f_0 \g_{\rho_0}  U^*_{\rho_0}) f_1 U^*_{\rho_1}\\
 &=df_1U^*_{\rho_1}f_0 U^*_{\rho_0} - f_1 \g_{\rho_1}  U^*_{\rho_1} f_0 U^*_{\rho_0}
 - df_0U^*_{\rho_0}  f_1 U^*_{\rho_1} + f_0 \g_{\rho_0}  U^*_{\rho_0} f_1 U^*_{\rho_1} \\
 &=df_1\, {\rho_1}^*( f_0) - f_1 \,{\rho_1}^*( f_0) \, \g_{\rho_1} 
- df_0 \,{\rho_0}^*( f_1) + f_0 \,{\rho_0}^*( f_1)\, \g_{\rho_0} .
 \end{align*} 
Hence
 \begin{align*}  
2\Phi_{\rm C}(c_1) (a_0, a_1)&= \td c_1 (f_0 \,{\rho_0}^*( f_1)\, \g_{\rho_0} - 
 f_1 \,{\rho_1}^*( f_0) \, \g_{\rho_1}) \\
&=\int_M f_0 \,{\rho_0}^*( f_1) c_1(1, \rho_0) - \int_M f_1 \,{\rho_1}^*( f_0) c_1(1, \rho_1) 
 \end{align*} 
So letting ${\rho_0} = \phi, {\rho_1} = \phi^{-1}$, one has
  \begin{align*}  
 2\Phi_{\rm C}(c_1) (a_0, a_1)&=\int_M f_0 \,{\rho_0}^*( f_1)  \left( - \frac{\rho''_0}{\rho'_0}\right) dx
  - \int_M f_1 \,{\rho_1}^*( f_0) \left(- \frac{\rho''_1}{\rho'_1}\right) dx \\
&= -\int_M f_0 \,(f_1\circ \phi) \, \frac{\phi''}{\phi'}\, dx
  + \int_M f_1 \,( f_0 \circ \phi^{-1}) \, \frac{({\phi}^{-1})''}{({\phi }^{-1})'}\, dx .
 \end{align*} 
Note now that, by substitution one has
 \begin{align*}  
& \int_M f_0(x) \,(f_1(\phi(x)) \, \frac{\phi''(x)}{\phi'(x)}\, dx = \int_M f_1(x) \,(f_0(\phi^{-1}(x))
\frac{\phi''(\phi^{-1}(x)}{\phi'(\phi^{-1}(x)}\, (\phi^{-1})'(x) dx \\
&=  \int_M f_1(x) \,(f_0(\phi^{-1}(x))
 \phi''(\phi^{-1}(x)\, (\phi^{-1})'(x)^2 dx  \\
&= - \int_M f_1(x) \,(f_0(\phi^{-1}(x)) \, \frac{(\phi^{-1})''(x)}{(\phi^{-1})'(x))} dx ;
 \end{align*} 
 the last equality uses the elementary identity
 \begin{align*}  
 \phi''(\phi^{-1}(x))\, (\phi^{-1})'(x)^2 \, + \, \frac{(\phi^{-1})''(x)}{(\phi^{-1})'(x))} \, = \, 0
 \end{align*} 
Thus we get
   \begin{align}  \label{pre-c1}
   \begin{split}
2 \Phi_{\rm C}&(c_1) (a_0, a_1) = 2\int_M f_1 \,( f_0 \circ \phi^{-1}) \, \frac{({\phi}^{-1})''}{({\phi}^{-1})'}\, dx \\
&=2 \tau(\s^{-1} \s^1_{11} (a_1) a_0) \, = \, 2 \tau(a_0 \, \s^{-2} \s^1_{11} (a_1)) .
\end{split}
\end{align}  
Equivalently,
  \begin{align}  \label{c1}
 \Phi_{\rm C}(c_1) (a_0, a_1)\, = \,  \chi_\tau (\s^{-2} \s^1_{11}) (a_0, a_1) .
\end{align} 
The other class arises from the constant simplicial form $\one \in \Om^0(|\D_{\Gb}M|)$,
which gives the cochain $c_0 \in \bar{C}^0(\Gb, \Om^0(M))$, 
\begin{align}  \label{c0B}
c_0(\rho) \equiv 1\, ; 
\end{align}
thus
$\td c_0$ is the ``transverse fundamental'' current
\begin{align*}
\td c (\a) \, = \, \int_M \a , \qquad \a \in \Om^0(M) .
\end{align*}
As in the previous case, taking ${\rho_0} = \phi, {\rho_1} = \phi^{-1}$ and using
the similar observation $\td c_0 (da) = 0$, one has
 \begin{align}  \label{pre-c0}
 \begin{split}
&2\Phi_{\rm C}(c_0)(a_0, a_1)= \td c_0 (da_1\, a_0 - da_0\, a_1) =
 \td c_0 \big(df_1\, {\rho_1}^*( f_0) - df_0 \,{\rho_0}^*( f_1)\big)  \\
 &= \int_M (f_0\circ {\phi}^{-1}) df_1  - \int_M (f_1\circ {\phi}) df_0 \\
 &= \int_M (f_0\circ {\phi}^{-1}) X_1(f_1) dx  - \int_M (f_1\circ {\phi}) X_1(f_0) dx  \\
 &= 2\int_M (f_0\circ {\phi}^{-1}) X_1(f_1) dx  =  2 \tau(X_1(a_1) a_0) 
 =  2 \tau(a_0 \s^{-1}X_1(a_1))  .
 \end{split}
 \end{align} 
Thus,
 \begin{align} \label{c0}
 \Phi_{\rm C}(c_0) (a_0, a_1) \, = \, \chi_\tau (\s^{-1}X_1) (a_0, a_1) .
 \end{align} 
Summing up, we have established the following result.

\begin{proposition} \label{expl-cocy}
Via the characteristic map \eqref{char-map},
the formulas \eqref{c0} and \eqref{c1} determine uniquely the 
cyclic cocycles $c^\Kc_0$ and $c^\Kc_1$ in $CC^1(\Kc_1,\, ^{\s^{-1}}\Cb)$,
\begin{align}
c^\Kc_0:= \one\ot\s^{-1} X_1, \qquad c^\Kc_1:= \one \ot \s^{-2}\s^1_{1,1} 
\end{align}
whose cohomology classes form a basis of $HP^\bullet (\Kc_1,\, ^{\s^{-1}}\Cb)$.
\end{proposition}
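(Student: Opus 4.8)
The proposition records, in the case $n=1$, the identifications assembled in this section; the plan is to collect them. I would start from Theorem \ref{chernHP}: for $n=1$ the only multi-indices with $|J|\le n$ are $J=\emptyset$ and $J=(1)$, so the two cocycles $\kappa_\emptyset(\hat{\Om}_\nb)=\Phi_{\rm rd}\bigl(C_\emptyset(\hat{\Om}_\nb)\bigr)$ and $\kappa_{(1)}(\hat{\Om}_\nb)=\Phi_{\rm rd}\bigl(C_{(1)}(\hat{\Om}_\nb)\bigr)$ already represent a basis of $HP^\bullet(\Kc_1;\,^{\s^{-1}}\Cb)$, which by Theorem \ref{theorem-chern} is two-dimensional, being isomorphic to $P_2[c_1]$. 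Next I would identify the two Bott cochains involved. Since $c_\emptyset=1$, the cocycle $C_\emptyset(\hat{\Om}_\nb)$ is the image under $\oint_{\D^\bullet}$ of the constant simplicial $0$-form $\one$, \ie the cochain $c_0\equiv1$ of \eqref{c0B}; and since $c_1(A)=A$ in rank one, $C_{(1)}(\hat{\Om}_\nb)$ is the image under $\oint_{\D^\bullet}$ of the pulled-back simplicial curvature $\hat{R}$, which by the computation carried out above equals the cochain $c_1$ of \eqref{c1B}.

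The substantive part is the explicit evaluation of Connes' chain map $\Phi_{\rm C}$ on $c_0$ and $c_1$, which is precisely what formulas \eqref{c0} and \eqref{c1} supply: $\Phi_{\rm C}(c_0)=\chi_\tau(\s^{-1}X_1)$ and $\Phi_{\rm C}(c_1)=\chi_\tau(\s^{-2}\s^1_{1,1})$. Since $\Phi_{\rm C}$ is a morphism of $(b,B)$-complexes and $c_0$, $c_1$ are cocycles in the total Bott complex, their images are cyclic cocycles over $\Ac$; and since the characteristic map $\chi_\tau$ is injective, $\s^{-1}X_1$ and $\s^{-2}\s^1_{1,1}$ are the unique elements of $\Kc_1=\Kc_1^{\ot 1}$ whose characteristic cocycles equal $\Phi_{\rm C}(c_0)$, respectively $\Phi_{\rm C}(c_1)$ — this yields the uniqueness assertion — while, via the identification \eqref{2bB} of $CC^\bullet(\Kc_1;\,^{\s^{-1}}\Cb)$ with $\Im(\Phi_{\rm rd})$, the elements $C_0=\one\ot\s^{-1}X_1$ and $C_1=\one\ot\s^{-2}\s^1_{1,1}$ are cocycles in $CC^1(\Kc_1;\,^{\s^{-1}}\Cb)$.

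Finally, transporting the basis statement through the same identification \eqref{2bB}, the classes $[C_0]$ and $[C_1]$ correspond to $\kappa_\emptyset(\hat{\Om}_\nb)$ and $\kappa_{(1)}(\hat{\Om}_\nb)$, and hence form a basis of $HP^\bullet(\Kc_1;\,^{\s^{-1}}\Cb)$ by Theorem \ref{chernHP}. I do not expect a genuine obstacle, since the analytic content lies entirely in the computation preceding the statement; the only point warranting attention is the bookkeeping of bidegrees — checking that both the constant form ($p=q=0$) and the first Chern form ($p=q=1$) are carried by $\Phi_{\rm C}$ into $CC^1(\Ac)$, which is why $C_0$ and $C_1$ both occur in Hochschild degree one — together with the vanishing of the Connes operator $B$, so that the Hochschild and periodic cyclic pictures agree.
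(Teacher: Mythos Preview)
Your proposal is correct and matches the paper's approach: the proposition is stated there as a summary (``Summing up, we have established the following result''), with the proof consisting precisely of the preceding explicit computations \eqref{c0}, \eqref{c1} combined with the general Theorem~\ref{chernHP} and the injectivity of $\chi_\tau$. Your write-up simply makes the logical organization of these ingredients explicit, including the bidegree check that both $c_0$ and $c_1$ land in $CC^1$.
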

\medskip

We now recall the definition of the characteristic map for the action of $\Hc_1$
on
$\td\Ac = C_c^\infty (FM) \rtimes \Gb$. The group $\Gb$ acts on the frame bundle
$FM = \Rb \rtimes  \Rb^+$ by prolongation, \ie ,
 \begin{align} \label{frame-action}
 \phi (x, \ybo) = (\phi(x), \phi'(x) \ybo), \quad x\in \Rb , \ybo \in  \Rb^+, \quad
 \phi \in \Gb.
 \end{align}
Dual to the canonical framing by the horizontal and the vertical vector fields, 
$X_1 = \ybo \p_x$ , resp. $Y^1_1= \ybo  \p_y$, there is the
basis of $1$-forms $\theta^1 = \ybo^{-1} dx$, $\om^1_1 = \ybo^{-1} d \ybo$.
The volume form 
$\td \varpi = \theta^1 \wg \om^1_1 \wg = \ybo^{-2} \, dx \wg d \ybo$ 
is $\Gb$-invariant and gives rise to a canonical trace
 $\td\tau:\td\Ac\ra \Cb$,
\begin{align} \label{up-trace}
\begin{split}
&\td\tau(fU^\ast_\phi)= \left\{\begin{matrix}\displaystyle   \int_{{FM}} f
\td\varpi,& \text{if} \quad \phi=\Id \\ &&\\
0, &\text{otherwise .}
\end{matrix}\right.
\end{split}
\end{align} 
Besides the usual trace property
\begin{align}\label{true-trace}
\td\tau(ab)=\td\tau(ba) , \qquad \forall \, a, b \in \Ac ,
\end{align}
 $\td\tau$ is also $\ve$-invariant with respect to the action of $\Hc_1$,  
 \begin{align} \label{delta-invariant}
\td\tau(h(a))=\ve(h)\td\tau(a) , \qquad \forall \, h \in \Hc_n, \, a \in \td\Ac .
\end{align}
Thanks to these properties the
characteristic map $\chi_{\td\tau}$ from the standard Hopf cyclic $(b, B)$-complex
$CC^\bullet(\Hc_1; \,\Cb_\d)$ to the cyclic cohomology $(b, B)$-complex
$ CC^\bullet(\Ac)$, defined by
\begin{align} \label{up-char}
 & \chi_{\td\tau} (h^1, \ldots,h^q)(a_0\ldots,a_q )=\td\tau(a_0 h^1(a^1)\cdots h^q(a_q)), \\ \notag
 &\qquad h^1, \ldots,h^q \in \Hc_n , \qquad  a_0, \ldots, a_q \in \td\Ac ,
\end{align}
is a map of cyclic complexes. Moreover, this map is injective. 

The relative version of this map for the pair $(\Hc_1, \GL^+_1)$ 
(see~\cite{M-EC} for general dimension $n \in \Nb$) is 
obtained as follows. For a $q$-cochain in the relative cohomology complex
$ c = \sum_\a \one \ot \dot h_\a^1 \ot \cdots \ot \dot h_\a^q \in 
\Cb_\d \ot_{\Uc(\Fg \Fl_1)} {\Qc_1^{\cop}}^{\ot q}$, with $\dot h \in \Qc_1$ denoting the
class of $h \in \Hc_1$, one defines
\begin{align} \label{rel-char-map}
\chi_{\rm rel} (c) (a_0\ldots,a_q ) :=\sum_\a
\tau(\td a_0  h_\a^1(\td a^1)\cdots h_q^q(\td a_q) \mid_{\ybo = 1}) ;
\end{align}
here  $\td a = \td f U^*_{\td \phi} \in \td \Ac$ stands for the natural lift to the frame bundle
$FM = \Rb \rtimes  \Rb^+$  of $a = f U^*_\phi \in \Ac$. Note that the 
twisted trace $\tau$ of $\Ac$
is applied only after evaluating at $\ybo = 1$ a product which was 
performed in $\td \Ac$; as will be seen in the computation below, this compensates
for the twisting.

\begin{proposition} \label{expl-rel-cocy}
Via the characteristic map \eqref{rel-char-map},
the formulas \eqref{c0} and \eqref{c1} determine uniquely the 
cyclic cocycles $c^\Hc_0$ and $c^\Hc_1$ in 
$\Cb_\d \ot_{\Uc(\Fg \Fl_1)} {\Qc_1^{\cop}}$,
\begin{align}
c^\Hc_0:= \one\ot  \dot X_1, \qquad c^\Hc_1:= \one \ot \dot \d^1_{1,1} 
\end{align}
whose cohomology classes form a basis of $HP^\bullet (\Hc_1 , \GL^+_1\, ; \Cb_\d)$.
\end{proposition}

\begin{proof}
To recognize $c^\Hc_0$, we note that in $\td \Ac$ one has
\begin{align*}
\td f_0  U^*_{\td \phi} \cdot X_1(f_1 U_{\td \phi}) &= 
f_0 (x) U^*_{\td \phi} \cdot \ybo\, f'_1(x) U_{\td \phi}  =
f_0 (x) \, \td \phi^*(\ybo f'_1(x)) \\
&= f_0 (x)\, \phi'(x) \, \ybo\,  f'_1(\phi(x)) ,
\end{align*}
therefore,
\begin{align*}
\chi_{\rm rel} (\one\ot  \dot X_1) = \int_M  f_0 (x)\, \phi'(x) \, f'_1(\phi(x)) \, dx
 = \int_M  f_0(\phi^{-1} (x))\,  f'_1(x) \, dx .
 \end{align*}
By \eqref{pre-c0}, the last integral coincides with $ \Phi_{\rm C}(c_0)$. 
Since  $\Phi_{\rm C}$ and $\Phi^{\GL_1}_{\rm C}$ are the same, this proves
the first identity in the statement.

Similarly, 
\begin{align*}
\td f_0  U^*_{\td \phi} \cdot \d^1_{1,1} (f_1 U_{\td \phi}) &= 
f_0 (x) U^*_{\td \phi} \cdot \ybo\, \frac{(\phi^{-1})'' (x)}{(\phi^{-1})' (x)} \, f_1(x) U_{\td \phi}  \\
&= f_0 (x)\, \phi'(x) \, \ybo\, \frac{(\phi^{-1})''((\phi(x))}{(\phi^{-1})' ((\phi(x))} \, f_1((\phi(x)) ,
\end{align*}
which implies
\begin{align*}
\chi_{\rm rel} (\one\ot  \dot \d^1_{1,1}) &= 
\int_M  f_0 (x)\, \phi'(x) \, \frac{(\phi^{-1})''((\phi(x))}{(\phi^{-1})' ((\phi(x))} \, f_1((\phi(x)) \, dx \\
 &= \int_M  f_0(\phi^{-1} (x))\,  
 \frac{(\phi^{-1})''(x)}{(\phi^{-1})' (x)} \, f_1(x) \, dx .
 \end{align*}
By \eqref{pre-c1}the result  coincides with $ \Phi_{\rm C}(c_1)$, completing the proof. 
\end{proof}

\medskip

\begin{remark}In the model given by
the Hopf-Chevalley-Eilenberg bicomplex \eqref{wedge-coinv-FH}
the above classes are represented by the cocycles
\begin{align}\label{cocycles-first-H}
&C_0( \hat{R}|_0)=1\in \Fc_\Hc , \quad  
C_1( \hat{R}|_0)=\t^1\ot 1\wdg\a^1_{1,1}\in V^\ast\ot \wdg^2\Fc_\Hc .
\end{align}
The corresponding cocycles, via the map $\Theta_\Kc$, in
the bicomplex \eqref{wedge-coinv-FK} are given by
\begin{align}\label{cocycles-first-K}
&C_0^\dagger( \hat{R}|_0)=1\in \Fc_\Kc , \quad  C_1^\dagger( \hat{R}|_0)=\t^1\ot 1\wdg\b^{-1}\b^1_{1,1}\in V^\ast\ot \wdg^2\Fc_\Kc.
\end{align}
\end{remark}
\bigskip

In contrast to the case of $\Hc_1$, the Hopf cyclic cohomology of $\Kc_1$
contains the Chern class $c^\Kc_1$ but
is missing the Godbillon-Vey class. 
The reason is the algebraic nature of the
cochains of the latter. We proceed to show that
if one allows transcendental cocycles, the Chern class $c^\Kc_1$
vanishes, while the Godbillon-Vey class reappears.

Indeed, let $\tilde{\Kc}_1$ be the Hopf algebra obtained by adjoining a primitive element
$\log \s$ to $\Kc_1$, subject to the commutation relations
 \begin{align}\notag
 [X, \log \s] = \s^{-1}{\s^1_{1,1}},\quad [\log \s, \s_k] = 0 ,
  \;\;\forall k \in \Nb.
 \end{align}
 
 With the Hopf algebraic structure dictated  by the Leibniz rule as follows, 
 \begin{equation}
\D (\log \s) = \log \s \ot 1 + 1 \ot \log \s_1 ,\\
\end{equation} 

\begin{proposition}
The $2$-cochain
 \begin{align} \label{gv}
\uc_1 := \one\ot \s^{-1} X_1 \ot \s^{-2} \log \s 
\end{align}
 is a  Hochschild cocycle  whose Connes boundary is $c^\Kc_1$. 
 \end{proposition}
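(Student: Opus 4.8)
The plan is to verify directly that $\uc_1$ is a Hochschild cocycle in $CC^\bullet(\tilde\Kc_1,\,^{\s^{-1}}\Cb)$ and then compute $B(\uc_1)$. First I would recall the formulas for the Hochschild coboundary $b$ and the Connes operator $B$ on the Hopf cyclic complex with coefficients in $^{\s^{-1}}\Cb$ (determined by the group-like $\s^{-1}$ and the counit $\ve$): for a $2$-cochain $\one\ot h^1\ot h^2$ one has $b(\one\ot h^1\ot h^2)= \one\ot 1\ot h^1\ot h^2 -\one\ot \D(h^1)\ot h^2 + \one\ot h^1\ot\D(h^2) - \one\ot h^1\ot h^2\ot \s^{-1}$, using the coaction of $^{\s^{-1}}\Cb$ to close up the last face. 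For $\uc_1=\one\ot \s^{-1}X_1\ot \s^{-2}\log\s$ I would insert the coproducts $\D(\s^{-1}X_1)=\s^{-1}X_1\ot\s^{-1}+\s^{-1}\s^k_1\s^{-1}\ot\s^{-1}X_k$ (here $n=1$ so this collapses to $\s^{-1}X_1\ot\s^{-1}+\s^{-1}\ot\s^{-1}X_1$, using $\s^1_1\s^{-1}=1$ when $n=1$) and $\D(\s^{-2}\log\s)=\s^{-2}\log\s\ot\s^{-2}+\s^{-2}\ot\s^{-2}\log\s$, and check that the four terms cancel. The degeneracy/unit terms and the $\s^{-1}$-twisted last face are exactly what make the outer terms match the split coproduct terms; this is a short but careful bookkeeping computation in the $n=1$ Hopf algebra $\tilde\Kc_1$.

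Next I would compute $B\uc_1$. Recall $B = N\circ s\circ (1-\lb)$ where $s$ is the extra degeneracy and $N$ the cyclic norm; on a normalized Hochschild $2$-cocycle $\one\ot h^1\ot h^2$ the operator $B$ produces the $1$-cochain obtained by antisymmetrizing over the cyclic group $\Zb/2$, namely (up to the standard sign normalization) $B(\one\ot h^1\ot h^2) = \one\ot S(h^1\ps{2})\ve(h^1\ps{1})\cdot(\text{something})$ — more precisely, I would use the concrete description from \cite{cm4} (also implicit in the excerpt's $(b,B)$-setup): for a cocycle the only surviving contribution is $\one\ot \ve(h^1)\,? $ together with the cyclic rotation. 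Carrying this out on $\uc_1$, the term $\s^{-1}X_1$ has $\ve(\s^{-1}X_1)=0$ and $\s^{-2}\log\s$ has $\ve(\s^{-2}\log\s)=0$, so the naive degeneracies vanish and $B$ reduces to the action of the cyclic operator composed with $s$; the surviving term is governed by the commutator $[X_1,\log\s]=\s^{-1}\s^1_{1,1}$, which is precisely the extra relation imposed in $\tilde\Kc_1$. The upshot I expect is $B\uc_1 = \one\ot \s^{-2}\s^1_{1,1} = C_1$, matching the cocycle identified in the previous proposition.

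The main obstacle will be pinning down signs and the exact form of $B$ in the $\s^{-1}$-twisted Hopf cyclic complex, and making sure that the commutation relation $[X_1,\log\s]=\s^{-1}\s^1_{1,1}$ enters with the correct coefficient — this is the step where the transgression actually happens, so a sign error there would give $\pm C_1$ or a scalar multiple rather than $C_1$ on the nose. I would handle this by working in the normalized complex (so that all degeneracy terms with a counit factor drop out), reducing $B$ to its cyclic-antisymmetrization piece, and then expanding $b(\uc_1)=0$ in tandem: the identity $b\uc_1=0$ forces several terms to coincide and thereby fixes the normalization of $B\uc_1$ without ambiguity. As a sanity check I would also verify the analogous (and simpler) fact in the Hopf-Chevalley-Eilenberg model \eqref{wedge-coinv-FK}, where the transgression $\uc_1 \leftrightarrow \t^1\ot 1\wg\b^{-1}\b^1_{1,1}$ under $\p_\wg$ is purely a Lie-algebra-cohomology computation using the action $X_1\rt\log\s = \s^{-1}\s^1_{1,1}$; matching the two models then confirms $B\uc_1 = C_1$.
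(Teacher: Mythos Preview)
Your plan diverges from the paper's argument in a crucial way, and the direct route you propose does not go through as described.

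The paper does \emph{not} verify $b\uc_1=0$ and $B\uc_1=C_1$ inside the Hopf cyclic complex of $\tilde\Kc_1$. Instead it transfers via the (injective) characteristic map $\chi_\tau$ to the cyclic complex of the crossed product $\Ac_\Gb=C_c^\infty(\Rb)\rtimes\Gb$, and carries out both computations there. The Hochschild identity $b\td\uc_1=0$ is obtained by expanding the four face maps on concrete elements $f_iU^\ast_{\phi_i}$, splitting two of them with the ordinary Leibniz rule and the identity $\log((\phi_3\phi_2)')=\log(\phi_3')\circ\phi_2+\log(\phi_2')$, and then killing the two leftover terms with the $\s^{-1}$-trace property \eqref{sigma-trace}. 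The transgression $B\td\uc_1=C_1$ is obtained in the same model: one rewrites $\td\uc_1(1,a,b)-\td\uc_1(1,b,a)$ using the $\s^{-1}$-trace and then a genuine \emph{integration by parts}, so that the factor $\s^1_{1,1}$ arises as $(\log\phi')'=\phi''/\phi'$, not from the Hopf-algebraic commutator $[X_1,\log\s]$.

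Your direct approach runs into trouble already at the $b$-step. Using the coproducts you wrote, namely $\D(\s^{-1}X_1)=\s^{-1}X_1\ot\s^{-1}+1\ot\s^{-1}X_1$ and $\D(\s^{-2}\log\s)=\s^{-2}\log\s\ot\s^{-2}+\s^{-2}\ot\s^{-2}\log\s$, the face terms do \emph{not} cancel pairwise: after the obvious cancellation of the first and third faces one is left with
\[
\s^{-1}X_1\ot(\s^{-2}-\s^{-1})\ot\s^{-2}\log\s \;+\; \s^{-1}X_1\ot\s^{-2}\log\s\ot(\s^{-2}-\s^{-1}),
\]
which is a nonzero tensor. (Contrast this with the GV computation in the next proposition, where the second tensor factor of $\D(\s^{-2}\s^1_{1,1})$ genuinely produces a $\s^{-1}$ that matches the twisted last face.) So the ``short but careful bookkeeping'' you anticipate cannot close on its own; the vanishing comes only after passing through $\chi_\tau$, where the $\s^{-1}$-trace supplies the missing identification. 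Likewise, in the $B$-step the cyclic operator in the Hopf cyclic complex involves the antipode and does not by itself produce $[X_1,\log\s]$; the mechanism that manufactures $\s^{-2}\s^1_{1,1}$ is integration by parts against $\tau$ on $\Ac_\Gb$, which has no counterpart in the purely algebraic complex. Your proposed sanity check in the Chevalley--Eilenberg model is a reasonable heuristic, but it does not repair the main computation.
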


\begin{proof}
By transfer via the characteristic map $\chi_\tau$, we can work in the cyclic complex
of $\Ac_\G$.  Denoting the transported cochain by
 \begin{equation*}
\td\uc_1(f_0U^\ast_{\phi_0}, f_1U^\ast_{\phi_1}, f_2U^\ast_{\phi_2})= 
\tau\bigg(f_0U^\ast_{\phi_0} {\phi_1'}^{-1} \cdot {f_1'}U^\ast_{\phi_1} {\phi_2'}^{-1}\cdot { f_2\cdot \log(\phi_2')}U^\ast_{\phi_2}\bigg),
\end{equation*}
 let us first
check that it is a Hochschild cocycle. 
\begin{align*}
&b(\td\uc_1)(f_0U^\ast_{\phi_0}, \ldots, f_3U^\ast_{\phi_3})\\
&=\tau\bigg(f_0\cdot (f_1\circ\phi_0)U^\ast_{\phi_1\circ\phi_0} {\phi_1'}^{-1}\cdot {f_2'}U^\ast_{\phi_2} {\phi_3'}^{-1}\cdot { f_3\cdot \log(\phi_3')}U^\ast_{\phi_3}\bigg)\\
&- \tau\bigg(f_0U^\ast_{\phi_0} {(\phi_2\circ\phi_1)'}^{-1}{(f_1\cdot (f_2\circ \phi_1))'}U^\ast_{\phi_2\circ\phi_1} {\phi_3'}^{-1}\cdot { f_3\cdot \log(\phi_3')}U^\ast_{\phi_3}\bigg)\\
&+\tau\bigg(f_0U^\ast_{\phi_0} {\phi_1'}^{-1}\cdot {f_1'} U^\ast_{\phi_1}{(\phi_3\circ\phi_2)'}^{-1}  { {f_2\cdot( f_3\circ\phi_2)}\cdot \log({(\phi_3\circ\phi_2})')}
U^\ast_{{\phi_3\circ\phi_2}}\bigg)\\
&-\tau\bigg(f_3\cdot ( f_0\circ \phi_3 )U^\ast_{\phi_0\circ\phi_3} {\phi_1'}^{-1}\cdot {f_1'}U^\ast_{\phi_1} {\phi_2'}^{-1}\cdot{ f_2\cdot\log(\phi_2')} U^\ast_{\phi_2}\bigg).
\end{align*}

By the  Leibniz rule we see that, 
\[{(\phi_2\circ\phi_1)'}^{-1}\cdot {(f_1\cdot f_2\circ \phi_1)'}=
{(\phi_2'\circ\phi_1)\cdot \phi_1'}^{-1}\cdot {f_1'\cdot (f_2\circ \phi_1)}+{\phi_2'\circ\phi_1}^{-1}\cdot {f_1\cdot (f_2'\circ \phi_1)}\]

and 

\begin{align*}&{(\phi_3\circ\phi_2)'}^{-1}\cdot { {f_2\cdot( f_3\circ\phi_2)}\cdot \log({(\phi_3\circ\phi_2})')}\\
&={\phi_3'\circ\phi_2}^{-1}\cdot{\phi'_2}\;( f_3\cdot \log({\phi_3'))\circ\phi_2}+ {\phi_2'}^{-1}\cdot{f_2\cdot\log(\ph_2')}
\;\;{\phi_3'\circ\phi_2}^{-1}\cdot{ {f_3\circ\phi_2}}
\end{align*}

which yields that 

\begin{align*}
&b(\td\uc_1)(f_0U^\ast_{\phi_0}, \ldots, f_3U^\ast_{\phi_3})\\
&= \tau\bigg( f_0U^\ast_{\phi_0} {\phi_1'} ^{-1}\cdot{f_1'}U^\ast_{\phi_1} {\phi_2'}^{-1}\cdot { f_2\cdot\log(\phi_2')}U^\ast_{\phi_2} 
{\phi_3'}^{-1}\cdot{f_3}U^\ast_{\phi_3}\bigg)- \\
& \tau\bigg(f_3U^\ast_{\phi_3} f_0U^\ast_{\phi_0} {\phi_1'}^{-1}{f_1'} U^\ast_{\phi_1} {\phi_2'}^{-1}{ f_2\cdot\log(\phi_2')} U^\ast_{\phi_2}\bigg).
\end{align*}

Finally by the  $\s^{-1}$-tracial property   of $\tau$ we see $b(\td\uc_1)=0$.  

To show that $B(\td\uc_1)=c^\Kc_1$, 
we first observe that $\td\uc_1$ is normalized. So we continue  by 
\[B(\td\uc_1)(a,b)=\td\uc_1(1,a,b)-\td\uc_1(1,b,a). \]
\begin{align*}
&B(\td\uc_1)(f_0U^\ast_{\phi_0}, f_1U^\ast_{\ph_1})\\
&=\tau\bigg( {\phi_0'}^{-1}\cdot{f_0'} U^\ast_{\phi_0} {\phi_1'}^{-1}\cdot { f_1\cdot \log(\phi_1')}U^\ast_{\phi_1}\bigg)
-\tau\bigg( {\phi_1'}^{-1}\cdot {f_1'}U^\ast_{\phi_1} {\phi_0'}^{-1}\cdot{ f_0\cdot \log(\phi_0')} U^\ast_{\phi_0}\bigg).
\end{align*}
Without loss of generality we assume that $\phi_0^{-1}=\phi_1$. Then again by using the $\s^{-1}$-tracial property of $\tau$ we have

\begin{align*}
&\tau\bigg( {\phi_1'}^{-1} \cdot{f_1'}U^\ast_{\phi_1} {\phi_0'}^{-1}\cdot { f_0\cdot \log(\phi_0')}U^\ast_{\phi_0}\bigg)=
\tau\bigg(f_0 U^\ast_{\phi_0} {\phi_0'\circ\phi_1}^{-1}\cdot{  \log(\phi_0'\circ\phi_1)} \cdot{(\phi_1')^{-2}}{f_1'} U^\ast_{\phi_1} \bigg)\\
&=-\tau\bigg(f_0 U^\ast_{\phi_0}    {\phi_1'}^{-2} \cdot {f_1' \cdot \log(\phi_1')}U^\ast_{\phi_1} \bigg)
\end{align*}

On the other hand one uses the integration by part property of $\tau$ to see
\begin{align*}
&\tau\bigg( {\phi_0'}\cdot {f_0'}U^\ast_{\phi_0} {\phi_1'}^{-1} { f_1\cdot \log(\phi_1')}U^\ast_{\phi_1}\bigg)=-\tau\bigg(  f_0  U^\ast_{\phi_0}{\phi_1'}^{-1}\left( \phi_1' \cdot{\phi_1'}^{-1}\cdot { f_1\cdot \log(\phi_1')} \right)'U^\ast_{\phi_1}\bigg)\\
&=-\tau\bigg(  f_0  U^\ast_{\phi_0}  {\phi_1'}^{-1}\cdot{f_1'\cdot \log(\phi_1')}U^\ast_{\phi_1}\bigg)-\tau\bigg(  f_0  U^\ast_{\phi_0} 
{(\phi_1')^{-2}}\cdot{f_1\cdot\phi_1''} U^\ast_{\phi_1}\bigg).
\end{align*}
This completes the proof of the claimed result.
\end{proof}
\medskip

When dealing with $\tilde{\Kc}_1$, 
one may import the  Godbillon-Vey  cocycle from the Bott bicomplex via
the characteristic map 
$$\chi_\tau :\;\;^{\s^{-1}}\Cb\ot {\td\Kc}^{\ot 2}\ra CC^2(\Ac) \, ,
$$
as we show below.

\begin{proposition} \label{gv}
The element
 \begin{align*} 
{\rm GV} :=\one\ot  \log \s \ot \s^{-2} {\s^1_{1,1}} - \one\ot\s^{-2} {\s^1_{1,1}} \ot \s^{-1} \log \s \in
\;\;^{\s^{-1}}\Cb\ot \tilde{\Kc}_1 \ot \tilde{\Kc}_1
\end{align*}
is a cyclic cocycle.
\end{proposition}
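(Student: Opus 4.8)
The plan is to transport the assertion, via the characteristic map $\chi_\tau$, to the cyclic $(b,B)$-complex of the crossed product algebra $\Ac_\G$, exactly as was done for $C_0$, $C_1$ and $\uc_1$. Since $\tau$ is a $\s^{-1}$-trace \eqref{sigma-trace} and is $\ve$-invariant \eqref{delta-invariant} --- and both properties persist verbatim for the enlarged Hopf algebra $\tilde\Kc_1$, because $\log\s$ acts on $fU^\ast_\phi$ by multiplication by $\log(\phi')$, which vanishes when $\phi=\Id$, so that $\tau(\log\s(a))=0$ --- the map $\chi_\tau$ extends to a morphism of cyclic complexes $CC^\bullet(\tilde\Kc_1;{}^{\s^{-1}}\Cb)\to CC^\bullet(\Ac_\G)$, and this extension is again faithful. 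It is therefore enough to prove that the transported cochain $\widetilde{{\rm GV}}:=\chi_\tau({\rm GV})$ is a cyclic $2$-cocycle in $CC^2(\Ac_\G)$. Evaluating it on $a_i=f_iU^\ast_{\phi_i}$ and using, in dimension one, $\log\s(fU^\ast_\phi)=\log(\phi')\,fU^\ast_\phi$, $\s^{-2}\s^1_{1,1}(fU^\ast_\phi)=\frac{\phi''}{(\phi')^2}\,fU^\ast_\phi$ and $\s^{-1}\log\s(fU^\ast_\phi)=\frac{\log(\phi')}{\phi'}\,fU^\ast_\phi$, one obtains $\widetilde{{\rm GV}}(a_0,a_1,a_2)$ as a difference of two integrals over $\Rb$, supported where $\phi_2\phi_1\phi_0=\Id$.

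First I would check that $\widetilde{{\rm GV}}$ is cyclic. Applying the $\s^{-1}$-tracial identity \eqref{sigma-trace} to the cyclically rotated cochain converts it into one in which the two summands of ${\rm GV}$ have been interchanged; the remaining identity is then settled by the substitution $\phi_1=\phi_0^{-1}$ and the elementary relation between $\phi''\circ\phi^{-1}$ and $(\phi^{-1})''$ already used in the proof of the preceding proposition. This is exactly what the antisymmetrization of ${\rm GV}$ --- with the twist $\s^{-1}$ placed in the last slot --- is designed to ensure. Alternatively, cyclicity may be checked intrinsically inside $\tilde\Kc_1$, by applying the Hopf-cyclic cyclic operator, which involves only $\D$, the antipode $S$ and the group-like $\s$, to the two terms of ${\rm GV}$.

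Next I would verify $b\,\widetilde{{\rm GV}}=0$. Expanding $b\,\widetilde{{\rm GV}}(a_0,\dots,a_3)$, one inserts the Leibniz rules dictated by the coproducts: $\log\s$ is primitive, whereas the Leibniz rule for $\s^{-2}\s^1_{1,1}$ (and for $\s^{-1}\log\s$) is governed by $\D(\s^1_{1,1})$ together with the defining commutator $[X,\log\s]=\s^{-1}\s^1_{1,1}$ of $\tilde\Kc_1$. After regrouping, each resulting term is either a total $x$-derivative --- hence annihilated by $\tau$ by the integration-by-parts property $\tau(X_1(\cdot))=0$ used in the preceding proof --- or is cancelled by a second term after a single application of \eqref{sigma-trace}; the organization of these cancellations runs exactly parallel to the verification that $b\,\td\uc_1=0$. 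Faithfulness of $\chi_\tau$ then yields $b\,{\rm GV}=0$, and together with cyclicity this shows that ${\rm GV}$ is a cyclic cocycle.

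I expect the main obstacle to be purely clerical: the bookkeeping when differentiating products of the shape $\frac{f\cdot\log(\phi')}{\phi'}$ under composition of diffeomorphisms, where both the non-primitivity of $\s^{-2}\s^1_{1,1}$ and the commutator $[X,\log\s]=\s^{-1}\s^1_{1,1}$ come into play. Each such contribution is, however, produced symmetrically by the two terms of ${\rm GV}$, so no cancellation device beyond \eqref{sigma-trace} and the vanishing of $\tau$ on total derivatives is needed. A more conceptual but heavier alternative would be to identify ${\rm GV}$ as the $\chi_\tau$-preimage of the (transcendental) Godbillon--Vey cochain sitting in a $\log$-enlargement of the regular-differentiable Bott bicomplex, where it is closed for Chern--Simons reasons, and then to invoke the chain-map property of $\Phi_{\rm C}$; this is the perspective to be developed in~\cite{mr16}.
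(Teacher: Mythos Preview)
Your approach is legitimate but takes a genuinely different route from the paper. The paper does \emph{not} transport via $\chi_\tau$; it works entirely inside the Hopf cyclic complex of $\tilde\Kc_1$, using only the coproduct and antipode formulas together with the group-like $\s$. Concretely, it computes $b$ of each of the two summands of ${\rm GV}$ separately and finds that \emph{each one is already a Hochschild cocycle on its own} --- the six terms in the standard $b$-expansion cancel pairwise without ever invoking the other summand. Then it applies the Hopf-cyclic operator $\tau_2$ (via $\D S$ and the twist by $\s^{-1}$) to each summand; here the two summands do mix, and one checks that $\tau_2({\rm GV})={\rm GV}$.

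What each approach buys: the paper's intrinsic calculation is short, self-contained, and avoids all the analytic bookkeeping with $\log(\phi')$, compositions, and integration by parts that you correctly anticipate as the ``purely clerical'' obstacle. Your route, by contrast, is the one the paper uses for the preceding proposition on $\uc_1$, and it has the conceptual advantage of linking ${\rm GV}$ to its geometric origin in the Bott bicomplex; but as written your argument remains a plan rather than a proof --- the cancellations in $b\,\widetilde{\rm GV}$ are asserted, not exhibited. Also, your expectation that the two summands must interact to kill $b$ is off: had you carried out the computation you would have found (as the paper does) that each is a Hochschild cocycle separately, and the antisymmetrization is needed only for cyclicity.
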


\begin{proof}
On the one hand, one has
 \begin{align} \label{b-gv1}
 \begin{split}
 b(\one\ot\log \s &\ot \s^{-2} {\s^1_{1,1}}) = \one\ot1 \ot \log \s \ot \s^{-2} {\s^1_{1,1}} \\
 &- \one\ot1 \ot \log \s \ot \s^{-2} {\s^1_{1,1}}
- \one\ot\log \s \ot 1 \ot \s^{-2} {\s^1_{1,1}}) \\
& +  \one\ot\log \s \ot 1 \ot \s^{-2} {\s^1_{1,1}} +
 \one\ot\log \s \ot \s^{-2} {\s^1_{1,1}} \ot \s^{-1}\\
 & - \one\ot\log \s \ot \s^{-2} {\s^1_{1,1}} \ot \s^{-1} = 0 ,
 \end{split}
\end{align}
and also
 \begin{align} \label{b-gv2}
 \begin{split}
 b(\one\ot\s^{-2} {\s^1_{1,1}} &\ot \s^{-1} \log \s) =\one\ot 1 \ot \s^{-2} {\s^1_{1,1}} \ot \s^{-1} \log \s\\
 &-\one\ot \s^{-2} {\s^1_{1,1}} \ot \s^{-1} \ot \s^{-1} \log \s -  \one\ot1 \ot \s^{-2} {\s^1_{1,1}} \ot \s^{-1} \log \s\\
& + \one\ot \s^{-2} {\s^1_{1,1}} \ot \s^{-1} \log \s  \ot \s^{-1} + \one\ot\s^{-2} {\s^1_{1,1}} \ot \s^{-1} \ot  \log \s \\
& -  \one\ot\s^{-2} {\s^1_{1,1}} \ot \s^{-1} \log \s  \ot \s^{-1} \, = \, 0 .
  \end{split}
\end{align}

On the other hand,
 \begin{align} \label{t-gv1}
 \begin{split}
 \tau_2(\one\ot \log \s \ot \s^{-2} {\s^1_{1,1}}) &= \one\ot \D S( \log \s) \cdot \s^{-2} {\s^1_{1,1}}  \ot \s^{-1} \\
 &= - \one\ot\D(\log \s) \cdot \s^{-2} {\s^1_{1,1}} \ot \s^{-1}\\
&= - (\one\ot\s^{-2} {\s^1_{1,1}} \log \s  \ot \s^{-1} + \one\ot\s^{-2} {\s^1_{1,1}} \ot \s^{-1} \log \s ) ,
 \end{split}
\end{align}
while, taking into account that $\quad  S(\s^{-2} {\s^1_{1,1}})  =  - \s^{-1} {\s^1_{1,1}}$,
 \begin{align} \label{t-gv2}
 \begin{split}
\tau_2 (\one\ot\s^{-2} {\s^1_{1,1}} &\ot \s^{-1} \log \s) = \one\ot\D S(\s^{-2} {\s^1_{1,1}}) \cdot
 \s^{-1} \log \s  \ot \s^{-1}\\
 & = -\one\ot\D  (\s^{-1} {\s^1_{1,1}}) \cdot  \s^{-1} \log \s  \ot \s^{-1}\\
 &= - (\one\ot\s^{-2} {\s^1_{1,1}} \log \s  \ot \s^{-1} +\one\ot \log \s \ot \s^{-2} {\s^1_{1,1}}) .
 \end{split}
 \end{align}
 Therefore,
 \begin{align}
  \begin{split}
  \tau_2(\one\ot\log \s &\ot \s^{-2} {\s^1_{1,1}} - \one\ot\s^{-2} {\s^1_{1,1}} \ot \s^{-1} \log \s) \\
  &=  \one\ot \log \s \ot \s^{-2} {\s^1_{1,1}} -\one\ot \s^{-2} {\s^1_{1,1}} \ot \s^{-1} \log \s .
  \end{split}
  \end{align}
\end{proof}

\end{document}